\documentclass[a4paper,11pt]{article}
\usepackage{mystyle}

\usepackage{todonotes}
\usepackage{pgfplots,pgfplotstable}
\usepgfplotslibrary{external}
\tikzsetexternalprefix{arxivfigures/}
\usepgfplotslibrary{groupplots}
\pgfplotsset{compat=1.18}

\makeatletter
\tikzset{
  planar forest/.append style={
    execute at end picture={
      \path[use as bounding box]
        ([yshift=3pt]current bounding box.north west)
        rectangle
        ([yshift=-3pt]current bounding box.south east);
    }
  }
}
\makeatother

\usepackage{hyperref}
\usepackage{etoolbox}
\newcommand{\diam}{\odot}
\newcolumntype{C}{>{$}c<{$}}
\usepackage{planarforest}
\newcommand{\forestA}{
\tikz[planar forest ] {

\node [b] at (0.0, 0.0) {  } 
;
}}

\newcommand{\forestB}{
\tikz[planar forest ] {

\node [lc] at (0.0, 0.0) { 1 } 
;
\node [lc] at (1.0, 0.0) { 1 } 
;
}}

\newcommand{\forestC}{
\tikz[planar forest ] {

\node [lc] at (0.0, 0.0) { 1 } 
child {node [lc] at (0.0, 1.0) { 1 }  
}
;
}}

\newcommand{\forestD}{
\tikz[planar forest ] {

\node [b] at (0.0, 0.0) {  } 
child {node [b] at (0.0, 1.0) {  }  
}
;
}}

\newcommand{\forestE}{
\tikz[planar forest ] {

\node [b] at (0.0, 0.0) {  } 
;
\node [b] at (1.0, 0.0) {  } 
;
}}

\newcommand{\forestF}{
\tikz[planar forest ] {

\node [b] at (0.0, 0.0) {  } 
child {node [lc] at (0.0, 1.0) { 1 }  
child {node [lc] at (0.0, 1.0) { 1 }  
}
}
;
}}

\newcommand{\forestG}{
\tikz[planar forest ] {

\node [lc] at (0.0, 0.0) { 1 } 
child {node [b] at (0.0, 1.0) {  }  
child {node [lc] at (0.0, 1.0) { 1 }  
}
}
;
}}

\newcommand{\forestH}{
\tikz[planar forest ] {

\node [lc] at (0.0, 0.0) { 1 } 
child {node [lc] at (0.0, 1.0) { 1 }  
child {node [b] at (0.0, 1.0) {  }  
}
}
;
}}

\newcommand{\forestI}{
\tikz[planar forest ] {

\node [b] at (0.0, 0.0) {  } 
child {node [lc] at (-0.5, 1.0) { 1 }  
}
child {node [lc] at (0.5, 1.0) { 1 }  
}
;
}}

\newcommand{\forestJ}{
\tikz[planar forest ] {

\node [lc] at (0.0, 0.0) { 1 } 
child {node [b] at (-0.5, 1.0) {  }  
}
child {node [lc] at (0.5, 1.0) { 1 }  
}
;
}}

\newcommand{\forestK}{
\tikz[planar forest ] {

\node [b] at (0.0, 0.0) {  } 
child {node [lc] at (0.0, 1.0) { 1 }  
}
;
\node [lc] at (1.0, 0.0) { 1 } 
;
}}

\newcommand{\forestL}{
\tikz[planar forest ] {

\node [lc] at (0.0, 0.0) { 1 } 
child {node [b] at (0.0, 1.0) {  }  
}
;
\node [lc] at (1.0, 0.0) { 1 } 
;
}}

\newcommand{\forestM}{
\tikz[planar forest ] {

\node [lc] at (0.0, 0.0) { 1 } 
child {node [lc] at (0.0, 1.0) { 1 }  
}
;
\node [b] at (1.0, 0.0) {  } 
;
}}

\newcommand{\forestN}{
\tikz[planar forest ] {

\node [b] at (0.0, 0.0) {  } 
;
\node [lc] at (1.0, 0.0) { 1 } 
;
\node [lc] at (2.0, 0.0) { 1 } 
;
}}

\newcommand{\forestO}{
\tikz[planar forest ] {

\node [lc] at (0.0, 0.0) { 1 } 
child {node [lc] at (0.0, 1.0) { 1 }  
child {node [lc] at (0.0, 1.0) { 2 }  
child {node [lc] at (0.0, 1.0) { 2 }  
}
}
}
;
}}

\newcommand{\forestP}{
\tikz[planar forest ] {

\node [lc] at (0.0, 0.0) { 1 } 
child {node [lc] at (0.0, 1.0) { 2 }  
child {node [lc] at (0.0, 1.0) { 1 }  
child {node [lc] at (0.0, 1.0) { 2 }  
}
}
}
;
}}

\newcommand{\forestQ}{
\tikz[planar forest ] {

\node [lc] at (0.0, 0.0) { 1 } 
child {node [lc] at (0.0, 1.0) { 2 }  
child {node [lc] at (0.0, 1.0) { 2 }  
child {node [lc] at (0.0, 1.0) { 1 }  
}
}
}
;
}}

\newcommand{\forestR}{
\tikz[planar forest ] {

\node [lc] at (0.0, 0.0) { 1 } 
child {node [lc] at (0.0, 1.0) { 1 }  
child {node [lc] at (-0.5, 1.0) { 2 }  
}
child {node [lc] at (0.5, 1.0) { 2 }  
}
}
;
}}

\newcommand{\forestS}{
\tikz[planar forest ] {

\node [lc] at (0.0, 0.0) { 1 } 
child {node [lc] at (0.0, 1.0) { 2 }  
child {node [lc] at (-0.5, 1.0) { 1 }  
}
child {node [lc] at (0.5, 1.0) { 2 }  
}
}
;
}}

\newcommand{\forestT}{
\tikz[planar forest ] {

\node [lc] at (0.0, 0.0) { 1 } 
child {node [lc] at (-0.5, 1.0) { 1 }  
}
child {node [lc] at (0.5, 1.0) { 2 }  
child {node [lc] at (0.0, 1.0) { 2 }  
}
}
;
}}

\newcommand{\forestU}{
\tikz[planar forest ] {

\node [lc] at (0.0, 0.0) { 1 } 
child {node [lc] at (-0.5, 1.0) { 2 }  
}
child {node [lc] at (0.5, 1.0) { 1 }  
child {node [lc] at (0.0, 1.0) { 2 }  
}
}
;
}}

\newcommand{\forestV}{
\tikz[planar forest ] {

\node [lc] at (0.0, 0.0) { 1 } 
child {node [lc] at (-0.5, 1.0) { 2 }  
}
child {node [lc] at (0.5, 1.0) { 2 }  
child {node [lc] at (0.0, 1.0) { 1 }  
}
}
;
}}

\newcommand{\forestW}{
\tikz[planar forest ] {

\node [lc] at (0.0, 0.0) { 1 } 
child {node [lc] at (-1.0, 1.0) { 1 }  
}
child {node [lc] at (0.0, 1.0) { 2 }  
}
child {node [lc] at (1.0, 1.0) { 2 }  
}
;
}}

\newcommand{\forestX}{
\tikz[planar forest ] {

\node [lc] at (0.0, 0.0) { 1 } 
child {node [lc] at (0.0, 1.0) { 2 }  
child {node [lc] at (0.0, 1.0) { 2 }  
}
}
;
\node [lc] at (1.0, 0.0) { 1 } 
;
}}

\newcommand{\forestY}{
\tikz[planar forest ] {

\node [lc] at (0.0, 0.0) { 2 } 
child {node [lc] at (0.0, 1.0) { 1 }  
child {node [lc] at (0.0, 1.0) { 2 }  
}
}
;
\node [lc] at (1.0, 0.0) { 1 } 
;
}}

\newcommand{\forestAB}{
\tikz[planar forest ] {

\node [lc] at (0.0, 0.0) { 2 } 
child {node [lc] at (0.0, 1.0) { 2 }  
child {node [lc] at (0.0, 1.0) { 1 }  
}
}
;
\node [lc] at (1.0, 0.0) { 1 } 
;
}}

\newcommand{\forestBB}{
\tikz[planar forest ] {

\node [lc] at (0.0, 0.0) { 1 } 
child {node [lc] at (-0.5, 1.0) { 2 }  
}
child {node [lc] at (0.5, 1.0) { 2 }  
}
;
\node [lc] at (1.0, 0.0) { 1 } 
;
}}

\newcommand{\forestCB}{
\tikz[planar forest ] {

\node [lc] at (0.0, 0.0) { 2 } 
child {node [lc] at (-0.5, 1.0) { 1 }  
}
child {node [lc] at (0.5, 1.0) { 2 }  
}
;
\node [lc] at (1.0, 0.0) { 1 } 
;
}}

\newcommand{\forestDB}{
\tikz[planar forest ] {

\node [lc] at (0.0, 0.0) { 1 } 
child {node [lc] at (0.0, 1.0) { 1 }  
}
;
\node [lc] at (1.0, 0.0) { 2 } 
child {node [lc] at (0.0, 1.0) { 2 }  
}
;
}}

\newcommand{\forestEB}{
\tikz[planar forest ] {

\node [lc] at (0.0, 0.0) { 1 } 
child {node [lc] at (0.0, 1.0) { 2 }  
}
;
\node [lc] at (1.0, 0.0) { 1 } 
child {node [lc] at (0.0, 1.0) { 2 }  
}
;
}}

\newcommand{\forestFB}{
\tikz[planar forest ] {

\node [lc] at (0.0, 0.0) { 1 } 
child {node [lc] at (0.0, 1.0) { 2 }  
}
;
\node [lc] at (1.0, 0.0) { 2 } 
child {node [lc] at (0.0, 1.0) { 1 }  
}
;
}}

\newcommand{\forestGB}{
\tikz[planar forest ] {

\node [lc] at (0.0, 0.0) { 1 } 
child {node [lc] at (0.0, 1.0) { 1 }  
}
;
\node [lc] at (1.0, 0.0) { 2 } 
;
\node [lc] at (2.0, 0.0) { 2 } 
;
}}

\newcommand{\forestHB}{
\tikz[planar forest ] {

\node [lc] at (0.0, 0.0) { 1 } 
child {node [lc] at (0.0, 1.0) { 2 }  
}
;
\node [lc] at (1.0, 0.0) { 1 } 
;
\node [lc] at (2.0, 0.0) { 2 } 
;
}}

\newcommand{\forestIB}{
\tikz[planar forest ] {

\node [lc] at (0.0, 0.0) { 1 } 
;
\node [lc] at (1.0, 0.0) { 1 } 
;
\node [lc] at (2.0, 0.0) { 2 } 
;
\node [lc] at (3.0, 0.0) { 2 } 
;
}}

\newcommand{\forestJB}{
\tikz[planar forest ] {

\node [lc] at (0.0, 0.0) { 1 } 
child {node [lc] at (0.0, 1.0) { 1 }  
child {node [lc] at (0.0, 1.0) { 1 }  
child {node [lc] at (0.0, 1.0) { 1 }  
}
}
}
;
}}

\newcommand{\forestKB}{
\tikz[planar forest ] {

\node [lc] at (0.0, 0.0) { 1 } 
child {node [lc] at (0.0, 1.0) { 1 }  
child {node [lc] at (-0.5, 1.0) { 1 }  
}
child {node [lc] at (0.5, 1.0) { 1 }  
}
}
;
}}

\newcommand{\forestLB}{
\tikz[planar forest ] {

\node [lc] at (0.0, 0.0) { 1 } 
child {node [lc] at (-0.5, 1.0) { 1 }  
}
child {node [lc] at (0.5, 1.0) { 1 }  
child {node [lc] at (0.0, 1.0) { 1 }  
}
}
;
}}

\newcommand{\forestMB}{
\tikz[planar forest ] {

\node [lc] at (0.0, 0.0) { 1 } 
child {node [lc] at (-1.0, 1.0) { 1 }  
}
child {node [lc] at (0.0, 1.0) { 1 }  
}
child {node [lc] at (1.0, 1.0) { 1 }  
}
;
}}

\newcommand{\forestNB}{
\tikz[planar forest ] {

\node [lc] at (0.0, 0.0) { 1 } 
child {node [lc] at (0.0, 1.0) { 1 }  
child {node [lc] at (0.0, 1.0) { 1 }  
}
}
;
\node [lc] at (1.0, 0.0) { 1 } 
;
}}

\newcommand{\forestOB}{
\tikz[planar forest ] {

\node [lc] at (0.0, 0.0) { 1 } 
child {node [lc] at (-0.5, 1.0) { 1 }  
}
child {node [lc] at (0.5, 1.0) { 1 }  
}
;
\node [lc] at (1.0, 0.0) { 1 } 
;
}}

\newcommand{\forestPB}{
\tikz[planar forest ] {

\node [lc] at (0.0, 0.0) { 1 } 
child {node [lc] at (0.0, 1.0) { 1 }  
}
;
\node [lc] at (1.0, 0.0) { 1 } 
child {node [lc] at (0.0, 1.0) { 1 }  
}
;
}}

\newcommand{\forestQB}{
\tikz[planar forest ] {

\node [lc] at (0.0, 0.0) { 1 } 
child {node [lc] at (0.0, 1.0) { 1 }  
}
;
\node [lc] at (1.0, 0.0) { 1 } 
;
\node [lc] at (2.0, 0.0) { 1 } 
;
}}

\newcommand{\forestRB}{
\tikz[planar forest ] {

\node [lc] at (0.0, 0.0) { 1 } 
;
\node [lc] at (1.0, 0.0) { 1 } 
;
\node [lc] at (2.0, 0.0) { 1 } 
;
\node [lc] at (3.0, 0.0) { 1 } 
;
}}

\newcommand{\forestSB}{
\tikz[planar forest ] {

\node [w,label={ [label distance=-1mm]180:{ \scriptsize i } }] at (0.0, 0.0) {  } 
child {node [w,label={ [label distance=-1mm]180:{ \scriptsize j } }] at (0.0, 1.0) {  }  
}
;
\node [w,label={ [label distance=-1mm]0:{ \scriptsize k } }] at (1.0, 0.0) {  } 
;
\node [w,label={ [label distance=-1mm]0:{ \scriptsize l } }] at (2.0, 0.0) {  } 
;
}}

\newcommand{\forestTB}{
\tikz[planar forest ] {

\node [w,label={ [label distance=-1mm]180:{ \scriptsize i } }] at (0.0, 0.0) {  } 
child {node [w,label={ [label distance=-1mm]180:{ \scriptsize j } }] at (0.0, 1.0) {  }  
}
;
\node [w,label={ [label distance=-1mm]0:{ \scriptsize k } }] at (1.0, 0.0) {  } 
;
\node [w,label={ [label distance=-1mm]0:{ \scriptsize l } }] at (2.0, 0.0) {  } 
;
}}

\newcommand{\forestUB}{
\tikz[planar forest ] {

\node [lc] at (0.0, 0.0) { 1 } 
child {node [lc] at (0.0, 1.0) { 1 }  
}
;
\node [lc] at (1.0, 0.0) { 2 } 
;
\node [lc] at (2.0, 0.0) { 2 } 
;
}}

\newcommand{\forestVB}{
\tikz[planar forest ] {

\node [lc] at (0.0, 0.0) { 1 } 
child {node [lc] at (0.0, 1.0) { 2 }  
}
;
\node [lc] at (1.0, 0.0) { 1 } 
;
\node [lc] at (2.0, 0.0) { 2 } 
;
}}

\newcommand{\forestWB}{
\tikz[planar forest ] {

\node [lc] at (0.0, 0.0) { 1 } 
child {node [lc] at (0.0, 1.0) { 1 }  
}
;
\node [lc] at (1.0, 0.0) { 1 } 
;
\node [lc] at (2.0, 0.0) { 1 } 
;
}}

\newcommand{\forestXB}{
\tikz[planar forest ] {

\node [lc] at (0.0, 0.0) { 1 } 
;
\node [lc] at (1.0, 0.0) { 1 } 
;
}}

\newcommand{\forestYB}{
\tikz[planar forest ] {

\node [lc] at (0.0, 0.0) { 1 } 
;
\node [lc] at (1.0, 0.0) { 1 } 
child {node [lc] at (0.0, 1.0) { 1 }  
child {node [lc] at (0.0, 1.0) { 1 }  
}
}
;
}}

\newcommand{\forestAC}{
\tikz[planar forest ] {

\node [lc] at (0.0, 0.0) { 1 } 
child {node [lc] at (0.0, 1.0) { 1 }  
}
;
\node [lc] at (1.0, 0.0) { 1 } 
child {node [lc] at (0.0, 1.0) { 1 }  
}
;
}}

\newcommand{\forestBC}{
\tikz[planar forest ] {

\node [lc] at (0.0, 0.0) { 1 } 
child {node [lc] at (0.0, 1.0) { 1 }  
child {node [lc] at (0.0, 1.0) { 1 }  
}
}
;
\node [lc] at (1.0, 0.0) { 1 } 
child {node [lc] at (0.0, 1.0) { 1 }  
child {node [lc] at (0.0, 1.0) { 1 }  
}
}
;
}}

\newcommand{\forestCC}{
\tikz[planar forest ] {

\node [lc] at (0.0, 0.0) { 1 } 
child {node [lc] at (0.0, 1.0) { 1 }  
child {node [lc] at (0.0, 1.0) { 1 }  
}
}
;
\node [lc] at (1.0, 0.0) { 1 } 
;
}}

\newcommand{\forestDC}{
\tikz[planar forest ] {

\node [lc] at (0.0, 0.0) { 1 } 
child {node [lc] at (-0.5, 1.0) { 1 }  
}
child {node [lc] at (0.5, 1.0) { 1 }  
}
;
\node [lc] at (1.0, 0.0) { 1 } 
;
}}

\newcommand{\forestEC}{
\tikz[planar forest ] {

\node [lc] at (0.0, 0.0) { 1 } 
child {node [lc] at (0.0, 1.0) { 1 }  
child {node [lc] at (0.0, 1.0) { 1 }  
child {node [lc] at (0.0, 1.0) { 1 }  
}
}
}
;
}}

\newcommand{\forestFC}{
\tikz[planar forest ] {

\node [lc] at (0.0, 0.0) { 1 } 
child {node [lc] at (0.0, 1.0) { 1 }  
child {node [lc] at (0.0, 1.0) { 1 }  
}
}
;
\node [lc] at (1.0, 0.0) { 1 } 
;
}}

\newcommand{\forestGC}{
\tikz[planar forest ] {

\node [lc] at (0.0, 0.0) { 1 } 
child {node [lc] at (-0.5, 1.0) { 1 }  
}
child {node [lc] at (0.5, 1.0) { 1 }  
}
;
\node [lc] at (1.0, 0.0) { 1 } 
;
}}

\newcommand{\forestHC}{
\tikz[planar forest ] {

\node [b,label={ [label distance=-1mm]270:{ \scriptsize 1 } }] at (0.0, 0.0) {  } 
;
\node [b,label={ [label distance=-1mm]270:{ \scriptsize 2 } }] at (1.0, 0.0) {  } 
child {node [b,label={ [label distance=-1mm]90:{ \scriptsize 4 } }] at (-0.5, 1.0) {  }  
}
child {node [b,label={ [label distance=-1mm]90:{ \scriptsize 5 } }] at (0.5, 1.0) {  }  
}
;
\node [b,label={ [label distance=-1mm]270:{ \scriptsize 3 } }] at (2.5, 0.0) {  } 
child {node [b,label={ [label distance=-1mm]90:{ \scriptsize 6 } }] at (0.0, 1.0) {  }  
}
;
}}

\newcommand{\forestIC}{
\tikz[planar forest ] {

\node [b] at (0.0, 0.0) {  } 
child {node [lc] at (0.0, 1.0) { 1 }  
}
;
\node [b] at (1.0, 0.0) {  } 
;
\node [lc] at (2.0, 0.0) { 1 } 
;
}}

\newcommand{\forestJC}{
\tikz[planar forest ] {

\node [lc] at (0.0, 0.0) { 1 } 
child {node [lc] at (0.0, 1.0) { 1 }  
}
;
\node [lc] at (1.0, 0.0) { 1 } 
;
\node [lc] at (2.0, 0.0) { 1 } 
;
}}

\newcommand{\forestKC}{
\tikz[planar forest ] {

\node [lc] at (0.0, 0.0) { 1 } 
child {node [lc] at (0.0, 1.0) { 1 }  
}
;
\node [lc] at (1.0, 0.0) { 2 } 
;
\node [lc] at (2.0, 0.0) { 2 } 
;
}}

\newcommand{\forestLC}{
\tikz[planar forest ] {

\node [lc] at (0.0, 0.0) { 1 } 
child {node [lc] at (0.0, 1.0) { 2 }  
}
;
\node [lc] at (1.0, 0.0) { 1 } 
;
\node [lc] at (2.0, 0.0) { 2 } 
;
}}

\newcommand{\forestMC}{
\tikz[planar forest ] {

\node [lc] at (0.0, 0.0) { 2 } 
child {node [lc] at (0.0, 1.0) { 1 }  
}
;
\node [lc] at (1.0, 0.0) { 2 } 
;
\node [lc] at (2.0, 0.0) { 1 } 
;
}}

\newcommand{\forestNC}{
\tikz[planar forest ] {

\node [lc] at (0.0, 0.0) { 1 } 
child {node [lc] at (0.0, 1.0) { 2 }  
}
;
\node [lc] at (1.0, 0.0) { 2 } 
;
\node [lc] at (2.0, 0.0) { 1 } 
;
}}

\newcommand{\forestOC}{
\tikz[planar forest ] {

\node [b] at (0.0, 0.0) {  } 
child {node [lc] at (-0.5, 1.0) { 1 }  
}
child {node [lc] at (0.5, 1.0) { 1 }  
child {node [lc] at (-0.5, 1.0) { 2 }  
}
child {node [lc] at (0.5, 1.0) { 2 }  
}
}
;
}}

\newcommand{\forestPC}{
\tikz[planar forest ] {

\node [lc] at (0.0, 0.0) { 1 } 
child {node [lc] at (0.0, 1.0) { 2 }  
child {node [b] at (-1.0, 1.0) {  }  
}
child {node [lc] at (0.0, 1.0) { 1 }  
}
child {node [lc] at (1.0, 1.0) { 2 }  
}
}
;
}}

\newcommand{\forestQC}{
\tikz[planar forest ] {

\node [b] at (0.0, 0.0) {  } 
child {node [lc] at (-0.5, 1.0) { 3 }  
}
child {node [lc] at (0.5, 1.0) { 3 }  
child {node [lc] at (-0.5, 1.0) { 4 }  
}
child {node [lc] at (0.5, 1.0) { 4 }  
}
}
;
\node [lc] at (3.0, 0.0) { 1 } 
child {node [lc] at (0.0, 1.0) { 2 }  
child {node [b] at (-1.0, 1.0) {  }  
}
child {node [lc] at (0.0, 1.0) { 1 }  
}
child {node [lc] at (1.0, 1.0) { 2 }  
}
}
;
}}

\newcommand{\forestRC}{
\tikz[planar forest ] {

\node [b] at (0.0, 0.0) {  } 
;
}}

\newcommand{\forestSC}{
\tikz[planar forest ] {

\node [b] at (0.0, 0.0) {  } 
;
}}

\newcommand{\forestTC}{
\tikz[planar forest ] {

\node [b] at (0.0, 0.0) {  } 
;
\node [b] at (1.0, 0.0) {  } 
;
}}

\newcommand{\forestUC}{
\tikz[planar forest ] {

\node [b] at (0.0, 0.0) {  } 
child {node [b] at (0.0, 1.0) {  }  
}
;
}}

\newcommand{\forestVC}{
\tikz[planar forest ] {

\node [lc] at (0.0, 0.0) { 1 } 
;
\node [lc] at (1.0, 0.0) { 1 } 
;
}}

\newcommand{\forestWC}{
\tikz[planar forest ] {

\node [b] at (0.0, 0.0) {  } 
;
}}

\newcommand{\forestXC}{
\tikz[planar forest ] {

\node [b] at (0.0, 0.0) {  } 
child {node [lc] at (-0.5, 1.0) { 1 }  
}
child {node [lc] at (0.5, 1.0) { 1 }  
}
;
}}

\newcommand{\forestYC}{
\tikz[planar forest ] {

\node [lc] at (0.0, 0.0) { 1 } 
;
\node [b] at (1.0, 0.0) {  } 
child {node [lc] at (0.0, 1.0) { 1 }  
}
;
}}

\newcommand{\forestAD}{
\tikz[planar forest ] {

\node [lc] at (0.0, 0.0) { 1 } 
;
\node [lc] at (1.0, 0.0) { 1 } 
;
\node [b] at (2.0, 0.0) {  } 
;
}}

\newcommand{\forestBD}{
\tikz[planar forest ] {

\node [lc] at (0.0, 0.0) { 1 } 
;
\node [lc] at (1.0, 0.0) { 1 } 
;
}}

\newcommand{\forestCD}{
\tikz[planar forest ] {

\node [lc] at (0.0, 0.0) { 1 } 
;
\node [lc] at (1.0, 0.0) { 1 } 
;
}}

\newcommand{\forestDD}{
\tikz[planar forest ] {

\node [lc] at (0.0, 0.0) { 1 } 
;
\node [lc] at (1.0, 0.0) { 1 } 
child {node [lc] at (-0.5, 1.0) { 2 }  
}
child {node [lc] at (0.5, 1.0) { 2 }  
}
;
}}

\newcommand{\forestED}{
\tikz[planar forest ] {

\node [lc] at (0.0, 0.0) { 1 } 
child {node [lc] at (0.0, 1.0) { 2 }  
}
;
\node [lc] at (1.0, 0.0) { 1 } 
child {node [lc] at (0.0, 1.0) { 2 }  
}
;
}}

\newcommand{\forestFD}{
\tikz[planar forest ] {

\node [lc] at (0.0, 0.0) { 2 } 
;
\node [lc] at (1.0, 0.0) { 1 } 
;
\node [lc] at (2.0, 0.0) { 1 } 
child {node [lc] at (0.0, 1.0) { 2 }  
}
;
}}

\newcommand{\forestGD}{
\tikz[planar forest ] {

\node [lc] at (0.0, 0.0) { 2 } 
;
\node [lc] at (1.0, 0.0) { 2 } 
;
\node [lc] at (2.0, 0.0) { 1 } 
;
\node [lc] at (3.0, 0.0) { 1 } 
;
}}

\newcommand{\forestHD}{
\tikz[planar forest ] {

\node [b] at (0.0, 0.0) {  } 
child {node [lc] at (0.0, 1.0) { 1 }  
}
;
\node [lc] at (1.0, 0.0) { 1 } 
;
\node [lc] at (2.0, 0.0) { 2 } 
child {node [b] at (-0.5, 1.0) {  }  
}
child {node [lc] at (0.5, 1.0) { 2 }  
}
;
}}

\newcommand{\forestID}{
\tikz[planar forest ] {

\node [b] at (0.0, 0.0) {  } 
child {node [lc] at (0.0, 1.0) { 1 }  
}
;
\node [lc] at (1.0, 0.0) { 1 } 
;
\node [lc] at (2.0, 0.0) { 2 } 
child {node [b] at (-0.5, 1.0) {  }  
}
child {node [lc] at (0.5, 1.0) { 2 }  
}
;
}}

\newcommand{\forestJD}{
\tikz[planar forest ] {

\node [b] at (0.0, 0.0) {  } 
child {node [lc] at (0.0, 1.0) { 1 }  
}
;
\node [lc] at (1.0, 0.0) { 1 } 
;
}}

\newcommand{\forestKD}{
\tikz[planar forest ] {

\node [lc] at (0.0, 0.0) { 2 } 
child {node [b] at (-0.5, 1.0) {  }  
}
child {node [lc] at (0.5, 1.0) { 2 }  
}
;
}}

\newcommand{\forestLD}{
\tikz[planar forest ] {

\node [lc] at (0.0, 0.0) { 2 } 
child {node [b] at (-0.5, 1.0) {  }  
}
child {node [lc] at (0.5, 1.0) { 2 }  
}
;
}}

\newcommand{\forestMD}{
\tikz[planar forest ] {

\node [b] at (0.0, 0.0) {  } 
child {node [lc] at (0.0, 1.0) { 1 }  
}
;
\node [lc] at (1.0, 0.0) { 1 } 
;
}}

\newcommand{\forestND}{
\tikz[planar forest ] {

\node [b] at (0.0, 0.0) {  } 
child {node [lc] at (0.0, 1.0) { 1 }  
}
;
\node [lc] at (1.0, 0.0) { 1 } 
;
\node [lc] at (2.0, 0.0) { 2 } 
child {node [b] at (-0.5, 1.0) {  }  
}
child {node [lc] at (0.5, 1.0) { 2 }  
}
;
}}

\newcommand{\forestOD}{
\tikz[planar forest ] {

\node [lc] at (0.0, 0.0) { 1 } 
;
\node [lc] at (1.0, 0.0) { 1 } 
;
}}

\newcommand{\forestPD}{
\tikz[planar forest ] {

\node [lc] at (0.0, 0.0) { 1 } 
child {node [lc] at (0.0, 1.0) { 2 }  
}
;
\node [lc] at (1.0, 0.0) { 1 } 
child {node [lc] at (0.0, 1.0) { 2 }  
}
;
}}

\newcommand{\forestQD}{
\tikz[planar forest ] {

\node [b] at (0.0, 0.0) {  } 
;
}}

\newcommand{\forestRD}{
\tikz[planar forest ] {

\node [lc] at (0.0, 0.0) { 1 } 
;
\node [lc] at (1.0, 0.0) { 1 } 
;
}}

\newcommand{\forestSD}{
\tikz[planar forest ] {

\node [b] at (0.0, 0.0) {  } 
;
}}

\newcommand{\forestTD}{
\tikz[planar forest ] {

\node [lc] at (0.0, 0.0) { 1 } 
;
\node [lc] at (1.0, 0.0) { 1 } 
;
}}

\newcommand{\forestUD}{
\tikz[planar forest ] {

\node [lc] at (0.0, 0.0) { 1 } 
child {node [lc] at (0.0, 1.0) { 1 }  
}
;
}}

\newcommand{\forestVD}{
\tikz[planar forest ] {

\node [b] at (0.0, 0.0) {  } 
;
}}

\newcommand{\forestWD}{
\tikz[planar forest ] {

\node [lc] at (0.0, 0.0) { 1 } 
;
\node [lc] at (1.0, 0.0) { 1 } 
;
}}

\newcommand{\forestXD}{
\tikz[planar forest ] {

\node [b] at (0.0, 0.0) {  } 
;
}}

\newcommand{\forestYD}{
\tikz[planar forest ] {

\node [lc] at (0.0, 0.0) { 1 } 
;
\node [lc] at (1.0, 0.0) { 1 } 
;
}}

\newcommand{\forestAE}{
\tikz[planar forest ] {

\node [lc] at (0.0, 0.0) { 1 } 
child {node [lc] at (0.0, 1.0) { 1 }  
}
;
}}

\newcommand{\forestBE}{
\tikz[planar forest ] {

\node [b] at (0.0, 0.0) {  } 
;
}}

\newcommand{\forestCE}{
\tikz[planar forest ] {

\node [lc] at (0.0, 0.0) { 1 } 
;
\node [lc] at (1.0, 0.0) { 1 } 
;
}}

\newcommand{\forestDE}{
\tikz[planar forest ] {

\node [b] at (0.0, 0.0) {  } 
;
}}

\newcommand{\forestEE}{
\tikz[planar forest ] {

\node [lc] at (0.0, 0.0) { 1 } 
;
\node [lc] at (1.0, 0.0) { 1 } 
;
}}

\newcommand{\forestFE}{
\tikz[planar forest ] {

\node [b] at (0.0, 0.0) {  } 
child {node [b] at (0.0, 1.0) {  }  
}
;
}}

\newcommand{\forestGE}{
\tikz[planar forest ] {

\node [b] at (0.0, 0.0) {  } 
;
\node [b] at (1.0, 0.0) {  } 
;
}}

\newcommand{\forestHE}{
\tikz[planar forest ] {

\node [b] at (0.0, 0.0) {  } 
;
}}

\newcommand{\forestIE}{
\tikz[planar forest ] {

\node [lc] at (0.0, 0.0) { 1 } 
;
\node [lc] at (1.0, 0.0) { 1 } 
;
}}

\newcommand{\forestJE}{
\tikz[planar forest ] {

\node [lc] at (0.0, 0.0) { 1 } 
child {node [lc] at (0.0, 1.0) { 1 }  
}
;
}}

\newcommand{\forestKE}{
\tikz[planar forest ] {

\node [b] at (0.0, 0.0) {  } 
child {node [b] at (0.0, 1.0) {  }  
}
;
}}

\newcommand{\forestLE}{
\tikz[planar forest ] {

\node [b] at (0.0, 0.0) {  } 
;
\node [b] at (1.0, 0.0) {  } 
;
}}

\newcommand{\forestME}{
\tikz[planar forest ] {

\node [b,label={ [label distance=-1mm]270:{ \scriptsize a } }] at (0.0, 0.0) {  } 
child {node [lc,label={ [label distance=-1mm]180:{ \scriptsize {b} } }] at (-1.0, 1.0) { 1 }  
child {node [lc,label={ [label distance=-1mm]180:{ \scriptsize {c} } }] at (0.0, 1.0) { 2 }  
}
}
child {node [lc,label={ [label distance=-1mm]90:{ \scriptsize {d} } }] at (0.0, 1.0) { 3 }  
}
child {node [b,label={ [label distance=-1mm]90:{ \scriptsize {e} } }] at (1.0, 1.0) {  }  
}
;
\node [lc,label={ [label distance=-1mm]270:{ \scriptsize f } }] at (2.0, 0.0) { 2 } 
child {node [b,label={ [label distance=-1mm]90:{ \scriptsize {g} } }] at (0.0, 1.0) {  }  
}
;
\node [lc,label={ [label distance=-1mm]270:{ \scriptsize h } }] at (3.0, 0.0) { 3 } 
child {node [lc,label={ [label distance=-1mm]90:{ \scriptsize {i} } }] at (0.0, 1.0) { 1 }  
}
;
}}

\newcommand{\forestNE}{
\tikz[planar forest ] {

\node [lc] at (0.0, 0.0) { 1 } 
child {node [lc] at (0.0, 1.0) { 1 }  
}
;
\node [lc] at (1.0, 0.0) { 1 } 
;
\node [lc] at (2.0, 0.0) { 1 } 
;
}}

\newcommand{\forestOE}{
\tikz[planar forest ] {

\node [lc] at (0.0, 0.0) { 1 } 
child {node [lc] at (0.0, 1.0) { 2 }  
}
;
\node [lc] at (1.0, 0.0) { 1 } 
;
\node [lc] at (2.0, 0.0) { 2 } 
;
}}

\newcommand{\forestPE}{
\tikz[planar forest ] {

\node [lc] at (0.0, 0.0) { 1 } 
child {node [lc] at (0.0, 1.0) { 2 }  
}
;
\node [lc] at (1.0, 0.0) { 2 } 
;
\node [lc] at (2.0, 0.0) { 1 } 
;
}}

\newcommand{\forestQE}{
\tikz[planar forest ] {

\node [lc] at (0.0, 0.0) { 1 } 
child {node [lc] at (0.0, 1.0) { 1 }  
}
;
\node [lc] at (1.0, 0.0) { 2 } 
;
\node [lc] at (2.0, 0.0) { 2 } 
;
}}

\newcommand{\forestRE}{
\tikz[planar forest ] {

\node [lc] at (0.0, 0.0) { 1 } 
child {node [lc] at (0.0, 1.0) { 2 }  
}
;
\node [lc] at (1.0, 0.0) { 1 } 
;
\node [lc] at (2.0, 0.0) { 2 } 
;
}}

\newcommand{\forestSE}{
\tikz[planar forest ] {

\node [lc] at (0.0, 0.0) { 1 } 
child {node [lc] at (0.0, 1.0) { 1 }  
}
;
\node [lc] at (1.0, 0.0) { 2 } 
;
\node [lc] at (2.0, 0.0) { 2 } 
;
}}

\newcommand{\forestTE}{
\tikz[planar forest ] {

\node [b] at (0.0, 0.0) {  } 
child {node [lc] at (-1.0, 1.0) { 1 }  
}
child {node [lc] at (0.0, 1.0) { 2 }  
child {node [lc] at (0.0, 1.0) { 1 }  
}
}
child {node [b] at (1.0, 1.0) {  }  
child {node [lc] at (0.0, 1.0) { 2 }  
}
}
;
}}

\newcommand{\forestUE}{
\tikz[planar forest ] {

\node [b] at (0.0, 0.0) {  } 
child {node [lc] at (-1.0, 1.0) { 1 }  
}
child {node [lc] at (0.0, 1.0) { 2 }  
child {node [lc] at (0.0, 1.0) { 1 }  
}
}
child {node [b] at (1.0, 1.0) {  }  
child {node [lc] at (0.0, 1.0) { 2 }  
}
}
;
}}

\newcommand{\forestVE}{
\tikz[planar forest ] {

\node [lc] at (0.0, 0.0) { 1 } 
;
\node [lc] at (1.0, 0.0) { 1 } 
;
}}

\newcommand{\forestWE}{
\tikz[planar forest ] {

\node [b] at (0.0, 0.0) {  } 
child {node [lc] at (-0.5, 1.0) { 2 }  
}
child {node [b] at (0.5, 1.0) {  }  
child {node [lc] at (0.0, 1.0) { 2 }  
}
}
;
}}

\newcommand{\forestXE}{
\tikz[planar forest ] {

\node [lc] at (0.0, 0.0) { 1 } 
;
\node [lc] at (1.0, 0.0) { 2 } 
child {node [lc] at (0.0, 1.0) { 1 }  
}
;
\node [lc] at (2.0, 0.0) { 2 } 
;
}}

\newcommand{\forestYE}{
\tikz[planar forest ] {

\node [b] at (0.0, 0.0) {  } 
child {node [b] at (0.0, 1.0) {  }  
}
;
}}

\newcommand{\forestAF}{
\tikz[planar forest ] {

\node [lc] at (0.0, 0.0) { 1 } 
;
\node [lc] at (1.0, 0.0) { 2 } 
child {node [lc] at (0.0, 1.0) { 1 }  
}
;
\node [b] at (2.0, 0.0) {  } 
child {node [lc] at (0.0, 1.0) { 2 }  
}
;
}}

\newcommand{\forestBF}{
\tikz[planar forest ] {

\node [b] at (0.0, 0.0) {  } 
;
}}

\newcommand{\forestCF}{
\tikz[planar forest ] {

\node [b] at (0.0, 0.0) {  } 
child {node [lc] at (-1.0, 1.0) { 1 }  
}
child {node [lc] at (0.0, 1.0) { 2 }  
child {node [lc] at (0.0, 1.0) { 1 }  
}
}
child {node [b] at (1.0, 1.0) {  }  
child {node [lc] at (0.0, 1.0) { 2 }  
}
}
;
}}

\newcommand{\forestDF}{
\tikz[planar forest ] {

\node [b] at (0.0, 0.0) {  } 
;
}}

\newcommand{\forestEF}{
\tikz[planar forest ] {

\node [lc] at (0.0, 0.0) { 1 } 
;
\node [lc] at (1.0, 0.0) { 1 } 
;
}}

\newcommand{\forestFF}{
\tikz[planar forest ] {

\node [b] at (0.0, 0.0) {  } 
;
}}

\newcommand{\forestGF}{
\tikz[planar forest ] {

\node [lc] at (0.0, 0.0) { 1 } 
;
\node [lc] at (1.0, 0.0) { 1 } 
;
}}

\newcommand{\forestHF}{
\tikz[planar forest ] {

\node [lc] at (0.0, 0.0) { 1 } 
child {node [lc] at (0.0, 1.0) { 1 }  
}
;
}}

\usepackage{csquotes}
\usepackage{aliascnt}
\usepackage[capitalize]{cleveref}
\crefname{cond}{condition}{conditions}
\crefname{figure}{Figure}{Figures}
\DeclareMathOperator{\Adm}{Adm}

\newcommand{\arsinh}{\operatorname{arsinh}}

\allowdisplaybreaks

\newtheorem{theorem}{Theorem}[section]
\newaliascnt{defcnt}{theorem}
\crefname{defcnt}{Definition}{Definitions}
\newtheorem{definition}[theorem]{Definition}
\newtheorem*{definition*}{Definition}
\newaliascnt{propcnt}{theorem}
\crefname{propcnt}{Proposition}{Propositions}
\newtheorem{proposition}[propcnt]{Proposition}
\newaliascnt{lemcnt}{theorem}
\crefname{lemcnt}{Lemma}{Lemmas}
\newtheorem{lemma}[lemcnt]{Lemma}
\newaliascnt{remcnt}{theorem}
\crefname{remcnt}{Remark}{Remarks}
\newtheorem{remark}[remcnt]{Remark}
\newtheorem*{remark*}{Remark}
\newaliascnt{colcnt}{theorem}
\crefname{colcnt}{Corollary}{Corollaries}

\newaliascnt{asscnt}{theorem}
\crefname{asscnt}{Assumption}{Assumptions}
\newtheorem{ass}[asscnt]{Assumption}
\newtheorem*{notation*}{Notation}
\newaliascnt{excnt}{theorem}
\crefname{excnt}{Example}{Examples}
\newtheorem{ex}[excnt]{Example}
\title{
Derivation of optimal stochastic Runge-Kutta methods with exotic and decorated Butcher series for the weak integration of stochastic dynamics
}

\author{
Adrien Busnot Laurent\textsuperscript{1}, Kristian Debrabant\textsuperscript{2} and Anne Kværnø\textsuperscript{3}
}
\newcommand{\arxivpaper}[2]{#1}
\usepackage{enumitem}
\begin{document}
\footnotetext[1]{
Univ Rennes, INRIA (Research team MINGuS), IRMAR (CNRS UMR 6625) and ENS Rennes, France.
Adrien.Busnot-Laurent@inria.fr.}
\footnotetext[2]{
Department of Mathematics and Computer Science, University of Southern Denmark, Odense, Denmark.
Debrabant@imada.sdu.dk.}
\footnotetext[3]{
Department of Mathematical Sciences, Norwegian University of Science and Technology, Trondheim, Norway.
Anne.Kvarno@ntnu.no.}

\maketitle

\begin{abstract}
The design of numerical integrators for solving stochastic dynamics with high weak order relies on tedious calculations and is subject to a high number of order conditions.
The original approaches from the literature consider strong approximations and adapt them for the weak approximation by replacing the iterated stochastic integrals by appropriate random variables. The methods obtained this way are sub-optimal in their number of function evaluations and the analysis of order conditions is unnecessarily complicated.
We provide in this paper a novel approach, relying on well-chosen sets of random Runge-Kutta coefficients, that greatly reduce the number of order conditions. The approach is successfully applied to the creation of a collection of new stochastic Runge-Kutta methods of second weak order with an optimal number of function evaluations and a smaller number of random variables.
The efficiency of the new methods is confirmed with numerical experiments and a modern algebraic approach using Hopf algebras is provided for the derivation and the study of the order conditions.

\smallskip

\noindent
{\it Keywords:\,} stochastic differential equations, stochastic Runge-Kutta methods, order conditions, Butcher series, exotic forests, exotic series, decorated forests, Hopf algebra.
\smallskip

\noindent
{\it AMS subject classification (2020):\,} 60H35, 65L06, 41A58, 16T05.
\end{abstract}

\tikzexternaldisable

\def\ts{\thinspace}

\xdef\basis{2}

\newcommand{\DefineMethodName}[2]{%
  \expandafter\newcommand\csname rmk-#1\endcsname{#2}%
}
\newcommand{\Method}[1]{\csname rmk-#1\endcsname}

\DefineMethodName{AAK1}{BDK1OLD}
\DefineMethodName{AAK2}{BDK2OLD}
\DefineMethodName{AAK3}{BDK3}
\DefineMethodName{AAK4}{BDK1}
\DefineMethodName{AAK5}{BDK2}
\DefineMethodName{DRI1}{DRI1}
\DefineMethodName{RI5}{RI5}
\DefineMethodName{RI6}{RI6}
\DefineMethodName{W2Ito1}{W2Ito1}
\DefineMethodName{Platen}{Platen/RI6}

\newcommand{\initcompeffort}[1]{
\renewcommand{\do}[1]{
\pgfplotstableset{
    create on use/compeffort-##1/.style={
        create col/expr={\thisrow{g0-##1} + #1*\thisrow{g-##1} + \thisrow{RV-##1}}
    }}}
\docsvlist{AAK1,AAK2,AAK3,AAK4,AAK5,Platen,W2Ito1,RI5,RI6,DRI1}
}

\pgfplotsset{AAK1style/.style={mark=+,cyan,thick}}
\pgfplotsset{AAK2style/.style={mark=-,purple,thick}}
\pgfplotsset{AAK3style/.style={mark=star,magenta,thick}}
\pgfplotsset{AAK4style/.style={mark=o,red,thick}}
\pgfplotsset{AAK5style/.style={mark=diamond,blue,thick}}
\pgfplotsset{Platenstyle/.style={mark=square,green,thick}}
\pgfplotsset{RI6style/.style={mark=square,green,thick}}
\pgfplotsset{DRI1style/.style={mark=x,brown,thick}}
\pgfplotsset{W2Ito1style/.style={mark=*,yellow,thick}}
\pgfplotsset{RI5style/.style={mark=|,gray,thick}}

\newcommand{\adderrorvshplot}[3]{
        \addplot[only marks,#3style] table[x ={h}, y={#2-#3}]{#1};
        \addplot[#3style,mark=none,forget plot] table[x={h}, y={create col/linear regression={y={#2-#3}}}]{#1};
        \expandafter\xdef\csname slope#3\endcsname{\pgfplotstableregressiona}
        \addlegendentry{\text{\Method{#3}}, $\hat{p}=\pgfmathprintnumber{\expandafter\csname slope#3\endcsname}$}
}

\newcommand{\adderrororderplots}[3]{
\renewcommand{\do}[1] {\adderrorvshplot{#1}{#2}{##1}}
\docsvlist{#3}
}

\newcommand{\adderrorvsplots}[4]{
\renewcommand{\do}[1]{\addplot[##1style] table[x ={#2-##1}, y={#3-##1}]{#1};}
\docsvlist{#4}
}

\newcounter{MyPlotLegend}

\newcommand{\numfigure}[4]{
\begin{figure}[ht]
\stepcounter{MyPlotLegend}%
\edef\thislegendname{gemeinsameLegende-\theMyPlotLegend}%
\begin{tikzpicture}
\begin{groupplot}[group style={
        group size=2 by 1,
        horizontal sep=3.0cm
    }]
    \nextgroupplot[
        width=0.4\textwidth,
        height=0.2\textheight,
        axis x line=bottom,
        xmode = log,
        log basis x=2,
        axis y line=left,
        ymode = log,
        log basis y=2,
        xlabel=$h$,
        ylabel={${|\hat{\E}}[\phi(X_N) - \phi(X(T_N))]|$},
        legend to name=\thislegendname,
        legend style={at={(0.5,1.5)},anchor=north,/tikz/every even column/.append style={column sep=2mm}},
        legend columns=\arxivpaper{5}{3}]
        \adderrororderplots{#1}{#2}{#3}
\nextgroupplot[
        width=0.4\textwidth,
        height=0.2\textheight,
        axis x line=bottom,
        xmode = log,
        log basis x=\basis,
        axis y line=left,
        ymode = log,
        log basis y=\basis,
        title={},
        ylabel={Comp.\ effort},
        xlabel={${|\hat{\E}}[\phi(X_N) - \phi(X(T_N))]|$},
        legend pos=north east]
        \adderrorvsplots{#1}{#2}{compeffort}{#3}
\end{groupplot}

\path (group c1r1.north west) -- (group c2r1.north east)
      node[midway, above=1cm] {\pgfplotslegendfromname{\thislegendname}};

\end{tikzpicture}
\caption{#4}
\end{figure}
}

\section{Introduction}
\label{section:Introduction}

We present new approaches for the high order numerical integration in the weak sense of general Itô stochastic differential equations of the form
\begin{equation}
\label{equation:SDE_Ito}
dX(t)=f^0(X(t))dt+\sum_{p=1}^m f^p(X(t)) dW^p(t),\quad X(0)=X_0,
\end{equation}
and SDEs with Stratonovich noise
\begin{equation}
\label{equation:SDE_Strato}
dX(t)=f^0(X(t))dt+\sum_{p=1}^m f^p(X(t)) \circ dW^p(t),\quad X(0)=X_0,
\end{equation}
where the $f^p$ are smooth Lipschitz vector fields and the $W^p$ are standard independent Brownian motions defined on a probability space fulfilling the standard assumptions.

The first approaches for the numerical approximation with high weak order of \eqref{equation:SDE_Ito} and \eqref{equation:SDE_Strato} were derived from strong methods, that are, pathwise approximations. This resulted in unnecessarily complicated methods and tedious order conditions and analysis. The first stochastic Runge-Kutta methods of second weak order had a number of function evaluations in the order of $m^2$ (see \cite{Platen84zza,Kloeden92nso,tocino02wso,komori07wso,debrabant08cos}), where one could theoretically expect the optimal number of $2m+2$ function evaluations (at least in the Itô case).
The methods were simplified using a B-series approach in \cite{Rossler07sor,Rossler09sor} to obtain a number of function evaluations that is affine in $m$. The methods were further simplified in \cite{Tang17ews} in the Itô case, without reaching the optimal number of function evaluations.
In the present paper, we focus on deriving weak approximations without relying on any intermediate strong approximation, and we provide new methods with the optimal number of function evaluations for general Itô and Stratonovich SDEs \eqref{equation:SDE_Ito}-\eqref{equation:SDE_Strato}.
We consider the following general class of stochastic Runge-Kutta methods, in the spirit of the methods introduced in \cite{Rossler06rkm}:
\begin{align}
\label{equation:def_RKsto}
H_i^p&=X_n+h\sum_{j=1}^s Z^{p,0}_{i,j} f^0(H_j^0)+\sqrt{h}\sum_{j=1}^s \sum_{q=1}^m Z^{p,q}_{i,j}f^q(H_j^q),\quad \nonumber i=1,\dots,s,\\
X_{n+1}&=X_n+h\sum_{i=1}^s z^{0}_{i} f^0(H_i^0)+\sqrt{h}\sum_{i=1}^s \sum_{p=1}^m z^{p}_{i}f^p(H_i^p),
\end{align}
with the Runge-Kutta coefficients $z^{p}\in \R^s$ and $Z^{p,q}\in \R^{s,s}$ being random variables and the dependency in $n$ of the $H_i^p$ being omitted for simplicity.
We follow the standard approach for the design of high weak order methods with the Milstein methodology \cite{Milstein85wao}, which uses local estimates. Under standard bounded moments properties, one typically obtains global weak order (see, for instance, the textbooks \cite{Kloeden92nso,milstein95nio,Milstein04snf} and references therein).
The focus in this paper is on the design of new high-order methods for the weak error, with the minimal number of function evaluations, and on the development of the algebraic foundations underlying stochastic numerics of general SDEs.

The calculation of order conditions is tedious and requires the use of Butcher series techniques.
In the last decades, several works extended the standard Butcher-series \cite{Butcher72aat,Hairer74otb} (see also the textbooks~\cite{Hairer06gni,Butcher16nmf,Butcher21bsa} and the review~\cite{McLachlan17bsa}) to the stochastic context.
Burrage and Burrage~\cite{Burrage96hso,Burrage00oco} and Komori, Mitsui and Sugiura~\cite{Komori97rta} introduced stochastic B-series in the context of strong convergence.
The analysis was later extended to stochastic Runge-Kutta methods by Rö{\ss}ler~\cite{Rossler04ste,Rossler06rta,Rossler06rkm,Rossler10ste,Rossler10saw}, Komori~\cite{komori07mcr,komori07wso,komori17wso} and Debrabant and Kv{\ae}rn{\o}~\cite{Debrabant08bsa,Debrabant10rkm,Debrabant11cos,debrabant17cah,anmarkrud18goc} for the creation of high order weak and strong integrators.
We take inspiration from the previous tree formalisms and use decorated forests, where we enforce the forests to be independent of the dimension of the problem and the number of noises, so that one order condition corresponds to one forest only. The decorated forests formalism allows to represent the Taylor expansion of the numerical methods and we shall see that a smaller formalism of exotic forests is sufficient to write the Taylor expansion of the exact flow.
In the context of additive noise, the formalism of exotic series was introduced by Bronasco, Laurent, and Vilmart in \cite{Laurent20eab, Laurent21ata, Bronasco22ebs} for the creation of integrators for solving stochastic dynamics with high order in the weak sense and for the invariant measure (see also, for instance, \cite{BouRabee10lra, Leimkuhler13rco, Abdulle14hon, Abdulle15lta, Leimkuhler16tco}).
This formalism, combined with the aromatic B-series \cite{Chartier07pfi, Iserles07bsm, Bogfjellmo22uat, Laurent23tab, Laurent23tld} and the LB-series \cite{Iserles00lgm}, was extended in \cite{Laurent21ocf, Bronasco25hoi} for the extrinsic and intrinsic numerical integration of SDEs on manifolds, and analogous algebraic formalisms are now used in a variety of different fields \cite{Bronsard22aod, Deng23fdo, Bonicelli25ebs}.
The fundamental geometric and algebraic properties of the exotic formalism of trees were later studied in \cite{Bronasco22cef, Laurent23tue} (see also \cite{Chartier10aso, McLachlan16bsm, MuntheKaas16abs, Bogfjellmo19aso} in the deterministic setting).
The exotic and decorated series formalism conveniently unifies the previous forests formalisms for the study of Euclidean SDEs with multiplicative noise and proves to be a crucial tool for simplifying the tedious calculation.
We adapt the expansions of the exact and numerical flows in exotic and decorated series, uncover the combinatorial links between the two types of expansions, and use a modern combinatorial approach to study the order conditions and the algebraic properties of the Taylor expansions of flows.

The article is organised as follows.
We derive in Section \ref{section:reduced_order_conditions} the general order conditions of stochastic Runge-Kutta methods and present a new set of random Runge-Kutta coefficients that reduces considerably the number of order conditions.
A collection of new methods with optimal number of stages and function evaluations is presented in Section \ref{section:new_methods}.
We propose numerical illustrations to confirm our theoretical findings in Section \ref{section:numerical_experiments}.
The calculation of order conditions and their algebraic study with exotic and decorated forests is then detailed in Section \ref{section:general_conditions}.
We discuss outlooks and future works in Section \ref{section:conclusion}.

\section{Reduced order conditions for stochastic Runge-Kutta methods}
\label{section:reduced_order_conditions}

In order to construct methods whose coefficients have to fulfill comparatively few order conditions for weak order two, we will first derive general order conditions and then consider a well chosen ansatz for the coefficients that splits the role of the Runge-Kutta coefficients and the random variables. This will then allow us to obtain a collection of new stochastic Runge-Kutta integrators with minimal number of function evaluations.

\subsection{General order conditions for weak order two}

Let $\CC^\infty_P(\R^d)$ be the space of smooth functions $\phi$ that satisfy polynomial estimates of the form
\[
|\phi^{(k)}(x)|\leq C(1+|x|^K),\quad k=0,1,\dots.
\]
Similarly, let $\mathfrak{X}_P(\R^d)$ be the space of globally Lipschitz vector fields whose components lie in $\CC^\infty_P(\R^d)$.
In all that follows, we assume for simplicity that $f^p\in \mathfrak{X}_P(\R^d)$ and we use the following notation for the $i$-th component of the partial derivatives of $f^p$:
\[f^{p,i}_{j_1\dots j_k}(x):=\frac{\partial f^{p,i}}{\partial x_{j_1} \dots \partial x_{j_k}}(x).\]
As our approach focuses on the high order analysis of numerical integrators, we use smooth maps for simplicity and refer to \cite{Milstein04snf} for weaker assumptions.

A one-step numerical integrator $X_{n+1}=\psi_h(X_n)$ for solving \eqref{equation:SDE_Ito}/\eqref{equation:SDE_Strato} is of local weak order $p$ if for $h\leq h_0$ small enough and all $\phi \in \CC^\infty_P(\R^d)$, the following estimate is satisfied
\[
\abs{\E[\phi(X(h))|X(0)=x]-\E[\phi(X_1)|X_0=x]}\leq C(1+|x|^K) h^{p+1}.
\]
For a test function $\phi \in \CC^\infty_P(\R^d)$, a weak approximation focuses on approaching $u(x,t)=\E[\phi(X(t))|X(0)=x]$.
A classical tool for the study of equations \eqref{equation:SDE_Ito}/\eqref{equation:SDE_Strato} is the backward Kolmogorov equation (see, for instance, \cite{Hasminskii80sso, Milstein04snf, Faou09csd, Abdulle14hon, Kopec15wbea, Kopec15wbeb, Laurent21ata}).
It states that $u(x,t)$ solves the following deterministic parabolic PDE in $\R^d$,
\begin{equation}
\label{equation:Kolmogorov}
\frac{\partial u}{\partial t} = \LL u, \quad u(x,0)=\phi(x),
\quad x\in \R^d,\quad t>0,
\end{equation}
where the generator $\LL$ is given for the Itô SDE \eqref{equation:SDE_Ito} by
\[
\LL_{\text{Itô}}\phi=\sum_{i=1}^d \phi_i f^{0,i} + \frac{1}{2} \sum_{i,j=1}^d \sum_{p=1}^m \phi_{ij}(f^{p,i}, f^{p,j}),
\]
and for the Stratonovich SDE \eqref{equation:SDE_Strato} by
\[
\LL_{\text{Strato}}\phi=\sum_{i=1}^d \phi_i f^{0,i} + \frac{1}{2} \sum_{i,j=1}^d \sum_{p=1}^m \phi_{ij}(f^{p,i}, f^{p,j}) + \frac{1}{2} \sum_{i,j=1}^d \sum_{p=1}^m \phi_i f^{p,i}_j f^{p,j}.
\]
Using \eqref{equation:Kolmogorov}, the weak quantity $u(x,h)$ thus has the following expansion for all $p$:
\begin{equation}
\label{equation:exact_expansion}
u(x,h)=\phi(x)+h\LL\phi(x)+\dots+\frac{h^p}{p!}\LL^p\phi(x)+ h^{p+1} R_p\phi(x),\quad \abs{R_p\phi(x)}\leq C(1+\abs{x}^K).
\end{equation}

Let us now consider the Taylor expansion of $\E[\phi(X_1)|X_0=x]$. We assume the following natural assumption on the Runge-Kutta coefficients, which ensures that the method \eqref{equation:def_RKsto} has bounded moments of all order and that non-integer powers in their Taylor expansion vanish.
\begin{ass}
\label{ass:RV_conditions}
For any bijection $\sigma\colon \{0,\dots,m\}\rightarrow\{0,\dots,m\}$ with $\sigma(0)=0$, the coefficients of the numerical method \eqref{equation:def_RKsto} satisfy for $\alpha,\beta\in\N$, $p_{l_1},q_{l_2},r_{l_2}\in\{0,1,\dots,m\}$ and $i_{l_1},j_{l_2},k_{l_2}\in\{1,\dots,s\}$ for $l_1\in\{1,\dots,\alpha\}$, $l_2\in\{1,\dots,\beta\}$
\[
\E[z^{\sigma(p_1)}_{i_1}\dots z^{\sigma(p_\alpha)}_{i_\alpha}Z^{\sigma(q_1),\sigma(r_1)}_{j_1,k_1}\dots Z^{\sigma(q_\beta),\sigma(r_\beta)}_{j_\beta,k_\beta}]=\E[z^{p_1}_{i_1}\dots z^{p_\alpha}_{i_\alpha}Z^{q_1,r_1}_{j_1,k_1}\dots Z^{q_\beta,r_\beta}_{j_\beta,k_\beta}]<\infty,
\]
and the following moments vanish:
\[
\E[z^{p_1}_{i_1}\dots z^{p_\alpha}_{i_\alpha}Z^{q_1,r_1}_{j_1,k_1}\dots Z^{q_\beta,r_\beta}_{j_\beta,k_\beta}] = 0 \text{ if }\abs{\{p_i\neq 0\}}+\abs{\{r_i\neq 0\}} \text{ is odd.}
\]
\end{ass}
Under \cref{ass:RV_conditions}, the stochastic Runge-Kutta methods satisfy a similar expansion to \eqref{equation:exact_expansion} \cite{Milstein85wao}:
\begin{equation}
\label{equation:num_expansion}
\E[\phi(X_1)|X_0=x] = \phi(x) + h\AA_1 \phi(x)+ \dots+ h^p \AA_p\phi(x)+ h^{p+1} R_p\phi(x),
\end{equation}
where the remainder satisfies $\abs{R_p\phi(x)}\leq C(1+\abs{x}^K)$ and the $\AA_i$ are linear differential operators with coefficients depending smoothly on the $f^p$ and their partial derivatives (that is, the coefficients are polynomials in the coordinates of the infinite jet space over the $f^p$ \cite{Kolar93noi, MuntheKaas16abs}).
If the expansions \eqref{equation:exact_expansion} and \eqref{equation:num_expansion} match up to order $p$, we obtain a method of (at least) local weak order $p$.
Sufficient conditions for global weak order $p$ are given by the following result.
\begin{proposition}[\cite{Milstein85wao, Milstein04snf}]\label{prop:Convergence}
Consider a one-step integrator that has a Taylor expansion \eqref{equation:num_expansion} that satisfies
\begin{equation}
\label{equation:TT_condition}
\AA_j=\frac{\LL^j}{j!}, \quad j=1,\dots, p.
\end{equation}
Assume further that the integrator has bounded moments of any order,
\[
\sup_{n\geq 0} \E[\abs{X_n}^{2k}]<\infty.\]
Then, the method has global weak order $p$, that is, for $T>0$, for all $h\leq h_0$ small enough with $Nh=T$, for all test functions $\phi\in\CC^\infty_P(\R^d)$ and initial conditions $X_0$, there exists $C>0$ such that
\[\abs{\E[\phi(X_N)]-\E[\phi(X(T))]}\leq C h^p.\]
\end{proposition}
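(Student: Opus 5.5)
This is Milstein's classical weak convergence theorem, and I would prove it by a telescoping argument built on the backward Kolmogorov equation \eqref{equation:Kolmogorov}. Set $u(x,t)=\E[\phi(X(t))|X(0)=x]$ and $t_n=nh$. Using the Markov property of both the exact flow and the one-step integrator, write
\[
\E[\phi(X_N)]-\E[\phi(X(T))]=\sum_{n=0}^{N-1}\E\bigl[\E[u(X_{n+1},T-t_{n+1})|X_n]-u(X_n,T-t_n)\bigr].
\]
The sum telescopes because $u(\cdot,0)=\phi$, and the flow property gives $u(x,T-t_n)=\E[u(X(h),T-t_{n+1})|X(0)=x]$. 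Each summand is therefore the local weak error of one step, applied to the test function $\psi_n:=u(\cdot,T-t_{n+1})$ and evaluated at the random point $X_n$.

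The first step is a regularity lemma for $u$. Because $f^p\in\mathfrak{X}_P(\R^d)$ are globally Lipschitz and smooth with polynomially bounded derivatives, and $\phi\in\CC^\infty_P(\R^d)$, the function $u(\cdot,t)$ lies in $\CC^\infty_P(\R^d)$ uniformly for $t\in[0,T]$. Concretely, for each $k$ there exist $C,K$ independent of $t\le T$ with $|\partial_x^k u(x,t)|\le C(1+|x|^K)$. I would prove this by differentiating the SDE with respect to the initial condition: the first and higher variation processes satisfy linear SDEs with polynomially bounded coefficients, so their moments are bounded. Combining this with the moment bounds $\E|X(t)|^{2k}\le C(1+|x|^{2k})$ gives the estimate. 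This is standard (see \cite{Milstein04snf}), and I expect it to be the main technical obstacle. The essential point is the uniformity in $t$ of the constants $C,K$.

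The second step is a local error estimate with uniform constants. The expansions \eqref{equation:exact_expansion} and \eqref{equation:num_expansion} have remainders controlled by finitely many derivatives of the test function, with polynomial weights. Applying both to $\psi_n$ and using \eqref{equation:TT_condition}, the terms up to $h^p$ cancel. This yields
\[
\bigl|\E[\psi_n(X_1)|X_0=x]-\E[\psi_n(X(h))|X(0)=x]\bigr|\le C(1+|x|^K)h^{p+1},
\]
with $C,K$ independent of $n$ and $h$ by the first step. Here it matters that the remainder constants $R_p$ depend on $\psi_n$ only through its $\CC^\infty_P$ seminorms up to some finite order. I would check this by writing the remainders in integral Taylor form: Itô–Taylor for the exact flow, and ordinary Taylor expansion in $\sqrt{h}$ for the numerical method, with the moment bounds on the random coefficients coming from \cref{ass:RV_conditions}.

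Finally, I would take expectations at $x=X_n$ and use the assumed uniform moment bound $\sup_n\E|X_n|^{2k}<\infty$. Each summand is then $O(h^{p+1})$ uniformly in $n$, and summing $N=T/h$ of them gives $|\E[\phi(X_N)]-\E[\phi(X(T))]|\le C T h^{p}$, which is the claim.
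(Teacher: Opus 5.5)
Your proposal is correct and is the classical Milstein argument. The paper gives no proof of its own and simply cites \cite{Milstein85wao, Milstein04snf}, where the global error is telescoped through the Kolmogorov solution $u$ using three ingredients: uniform-in-time polynomial bounds on the derivatives of $u(\cdot,t)$, a local error of order $h^{p+1}$ holding uniformly over the test functions $u(\cdot,T-t_{n+1})$, and the moment bounds on $X_n$. The point you rightly flag, that the remainder in \eqref{equation:num_expansion} may depend on the test function only through finitely many polynomially weighted derivative bounds, is handled in Milstein's formulation by stating the local hypothesis as test-function-independent conditions on the moments of the one-step increment up to order $2p+2$; this is what your Taylor-remainder check amounts to.
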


The derivation of order conditions is straightforward thanks to Proposition \ref{prop:Convergence}. However, it relies on the explicit knowledge of the operators $\AA_j$ and $\LL^j$. As the calculations get tedious, we use algebraic objects to derive the explicit combinatorics.
\begin{theorem}
\label{theorem:RK_conditions_general}
Consider a stochastic Runge-Kutta method of the form \eqref{equation:def_RKsto} with coefficients that satisfy \cref{ass:RV_conditions}.
Assume further that the exotic and Isserlis order conditions given in \cref{table:exotic_order_conditions,table:Isserlis_order_conditions} are satisfied for $p_1,p_2\in\{1,\dots,m\}$, that is, for each forest, the Runge-Kutta coefficient is equal to the Itô (respectively Stratonovich) coefficient.
Then, the method is of global weak order two for solving \eqref{equation:SDE_Ito} (respectively \eqref{equation:SDE_Strato}).
\end{theorem}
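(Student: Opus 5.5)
The plan is to reduce the statement to \cref{prop:Convergence} with $p=2$. We must check two things: that $\AA_1=\LL$ and $\AA_2=\LL^2/2$ hold as differential operators, and that the method has bounded moments of all orders. Bounded moments follow from \cref{ass:RV_conditions}: all moments of the coefficients are finite, and the $f^p$ are globally Lipschitz. The stage equations can then be solved, for $h$ small, with $|H_i^p|\le C(1+|X_n|)\,(1+\text{polynomial in the coefficients})$. A standard argument in the spirit of Milstein (a one-step moment estimate $\E[|X_{n+1}|^{2k}\mid X_n]\le (1+Ch)|X_n|^{2k}+Ch$) then gives the uniform bound. Here we use that the odd-order terms in $\sqrt{h}$ have zero conditional mean, which is the vanishing-moment part of the assumption. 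So the real content is the identification of the operators.

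Next, we expand both sides in series indexed by forests. On the exact side, we compute $\LL\phi$ and $\LL^2\phi$ explicitly, for Itô and for Stratonovich. Each term is an elementary differential of $\phi$ with respect to the $f^0$ and $f^p$, with pairs of noise indices $p$ summed over. These are written as exotic forests, where a pair of nodes sharing a noise label $p$ encodes the $\sum_p$. The Leibniz rule applied to $\LL(\LL\phi)$ produces each forest with an integer multiplicity, and after normalisation this yields the Itô or Stratonovich coefficients recorded in the tables.

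On the numerical side, we Taylor expand $\phi(X_1)$ around $x$ and substitute recursively the Taylor expansions of the stages $H_i^p$ in powers of $\sqrt{h}$. This gives a decorated-forest expansion in which each node carries a vector field index and a Runge--Kutta coefficient product. We then take expectations. By \cref{ass:RV_conditions}, terms with an odd number of nonzero noise indices vanish, which removes all half-integer powers of $h$. Invariance under permutations $\sigma$ of the noise labels implies that each remaining expectation depends only on the pattern of equal and distinct noise indices. Because the $f^p$ are arbitrary, any term whose pattern pairs indices in a way not present in $\LL^j$ must have a vanishing coefficient; these are the Isserlis-type conditions, encoding that expectations must factor as pairings. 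The terms that do match a forest of $\LL^j$ give the exotic conditions.

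Grouping terms by forest, we obtain $\AA_j=\sum_F c_{\mathrm{RK}}(F)\,F$ and $\LL^j/j!=\sum_F c(F)\,F$ for $j=1,2$. The hypotheses of the theorem state exactly that $c_{\mathrm{RK}}(F)=c(F)$ for every forest in \cref{table:exotic_order_conditions,table:Isserlis_order_conditions}. The main obstacle is completeness of this bookkeeping: every elementary differential appearing at orders $h$ and $h^2$ must be attached to some tabulated forest, and the symmetry factors must agree on both sides. A forest with nodes of equal labels $p_1=p_2$ must be separated from those with $p_1\neq p_2$, since the permutation invariance only identifies coefficients up to this distinction. I would handle this by listing all decorated forests with grading at most $2$ (counting $f^0$ as weight $1$ and $f^p$ as weight $1/2$), and checking that each reduces, after expectation and noise-label symmetry, to exactly one table entry. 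The result then follows from \cref{prop:Convergence}.
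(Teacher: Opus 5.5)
Your overall route matches the paper's: an exotic expansion of $\LL$ and $\LL^2/2$ on the exact side, a decorated expansion on the numerical side, coefficient matching, then \cref{prop:Convergence}. Your moment paragraph is more explicit than the paper, which simply attributes bounded moments to \cref{ass:RV_conditions}.

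Your paragraph on taking expectations, however, conflates two different kinds of index patterns. The forests of \cref{table:Isserlis_order_conditions}, where one colour occurs four times, are not patterns absent from $\LL^j$. They arise from the diagonal $p_1=p_2$ of the unrestricted noise sums attached to exotic forests, because $F^{\text{exo}}(\pi_d)=\sum_{d\le d_0}F^{\text{dec}}(\pi_{d_0})$ (\cref{prop:Moebius}). So the Isserlis conditions are not vanishing conditions. They are the identities $a(\pi_d)=\sum_{d_0\le d,\ \pi_{d_0}\in EF}a(\pi_{d_0})$ of \cref{theorem:Isserlis_discrete}, which is why their targets include values such as $3$ or $\frac32$. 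In the paper they rewrite $S_h^{\text{dec}}(a)$ as $S_h^{\text{exo}}(a)$, after which one compares with $S_h^{\text{exo}}(e)$ on exotic forests only. The symmetry bookkeeping you worry about is the identity $m(\pi_{d_0},\pi_d)=\sigma(\pi_d)/\sigma(\pi_{d_0})$. Your final matching survives this point if you split the exact sums into $p_1\neq p_2$ and $p_1=p_2$, as you indicate.

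The real gap concerns patterns that genuinely do not occur in $\LL^j$: a noise index appearing an odd number of times while the total number of nonzero indices is even. An example is $\sum_{p_1\neq p_2}\E[z^{p_1}_i z^{p_2}_j]\,\phi_{ij}f^{p_1,i}f^{p_2,j}$ at order $h$. No decorated forest, and hence no table entry, corresponds to such terms. Saying they must vanish because the $f^p$ are arbitrary gives necessity, not a consequence of the hypotheses. \Cref{ass:RV_conditions} as written does not remove them either: it only constrains the total parity, and permutation invariance does not force $\E[z^1_iz^2_j]=0$. Take $s=1$, $z^0=1$, all $Z=0$, and $z^p=\xi$ for every $p\geq1$ with $\xi$ standard normal. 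This satisfies the assumption and the Itô conditions of the first three rows of \cref{table:exotic_order_conditions}. Yet its $h$-term contains $\frac12\sum_{p\neq q}\phi_{ij}f^{p,i}f^{q,j}$, so for $m\geq2$ it is not even of weak order one. In the paper this issue sits inside \cref{thm:flow_exp_S_series}: saying the numerical expansion is a decorated S-series means only decorations in which every nonzero colour occurs an even number of times survive.

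You need to state and use this colourwise parity explicitly: every moment vanishes unless each $p\geq1$ occurs an even number of times among the root colours $z^{p}$ and the second indices $Z^{\cdot,p}$. It coincides with total parity when $m=1$. It holds for the paper's random variables via the law-preserving flip $(\theta_p,\eta_p)\mapsto(-\theta_p,-\eta_p)$, which changes the sign of exactly the coefficients with second index (or root colour) $p$. With it, every surviving term is indexed by a decorated forest of order at most two. The tables then prescribe exactly the decorated coefficients of $\LL$ and $\LL^2/2$, and \eqref{equation:TT_condition} follows for $j=1,2$.
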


\begin{longtable}{C|C|C|C|C}
\text{Ex.\ts forest}&\text{Differential}&\text{RK coefficient } (p_1\neq p_2)&\text{Itô}&\text{Str.}\\\hline
{\forestA} & \phi_i f^{0,i} & \E[z^{0}_{i}] & 1 & 1\\
{\forestB} & \phi_{ij} f^{p_1,i} f^{p_1,j} & \E[z^{p_1}_{i} z^{p_1}_{j}] & 1 & 1\\
{\forestC} & \phi_i f^{p_1,i}_{i_1} f^{p_1,i_1} & \E[z^{p_1}_{i} Z^{p_1,p_1}_{i,i_1}] & 0 & \frac{1}{2} \\
\hline
{\forestD} & \phi_i f^{0,i}_{i_1} f^{0,i_1} & \E[z^{0}_{i}Z^{0,0}_{i,i_1}] & \frac{1}{2} & \frac{1}{2}\\
{\forestE} & \phi_{ij} f^{0,i} f^{0,j} & \E[z^{0}_{i}z^{0}_{j}] & 1 & 1\\
{\forestF} & \phi_i f^{0,i}_{i_1} f^{p_1,i_1}_{i_2} f^{p_1,i_2} & \E[z^{0}_{i}Z^{0,p_1}_{i,i_1}Z^{p_1,p_1}_{i_1,i_2}] & 0 & \frac{1}{4}\\
{\forestG} & \phi_i f^{p_1,i}_{i_1} f^{0,i_1}_{i_2} f^{p_1,i_2} & \E[z^{p_1}_{i}Z^{p_1,0}_{i,i_1}Z^{0,p_1}_{i_1,i_2}] & 0 & 0\\
{\forestH} & \phi_i f^{p_1,i}_{i_1} f^{p_1,i_1}_{i_2} f^{0,i_2} & \E[z^{p_1}_{i}Z^{p_1,p_1}_{i,i_1}Z^{p_1,0}_{i_1,i_2}] & 0 & \frac{1}{4}\\
{\forestI} & \phi_i f^{0,i}_{i_1 i_2} f^{p_1,i_1} f^{p_1,i_2} & \E[z^{0}_{i}Z^{0,p_1}_{i,i_1}Z^{0,p_1}_{i,i_2}] & \frac{1}{2} & \frac{1}{2}\\
{\forestJ} & \phi_i f^{p_1,i}_{i_1 i_2} f^{0,i_1} f^{p_1,i_2} & \E[z^{p_1}_{i}Z^{p_1,0}_{i,i_1}Z^{p_1,p_1}_{i,i_2}] & 0 & \frac{1}{4}\\
{\forestK} & \phi_{ij} f^{0,i}_{i_1} f^{p_1,i_1} f^{p_1,j} & \E[z^{0}_{i}Z^{0,p_1}_{i,i_1}z^{p_1}_{j}] & \frac{1}{2} & \frac{1}{2}\\
{\forestL} & \phi_{ij} f^{p_1,i}_{i_1} f^{0,i_1} f^{p_1,j} & \E[z^{p_1}_{i}Z^{p_1,0}_{i,i_1}z^{p_1}_{j}] & \frac{1}{2} & \frac{1}{2}\\
{\forestM} & \phi_{ij} f^{p_1,i}_{i_1} f^{p_1,i_1} f^{0,j} & \E[z^{p_1}_{i}Z^{p_1,p_1}_{i,i_1}z^{0}_{j}] & 0 & \frac{1}{2}\\
{\forestN} & \phi_{ijk} f^{0,i} f^{p_1,j} f^{p_1,k} & \E[z^{0}_{i}z^{p_1}_{j}z^{p_1}_{k}] & 1 & 1\\
{\forestO} & \phi_{i} f^{p_1,i}_{i_1} f^{p_1,i_1}_{i_2} f^{p_2,i_2}_{i_3} f^{p_2,i_3} & \E[z^{p_1}_{i}Z^{p_1,p_1}_{i,i_1}Z^{p_1,p_2}_{i_1,i_2}Z^{p_2,p_2}_{i_2,i_3}] & 0 & \frac{1}{8}\\
{\forestP} & \phi_{i} f^{p_1,i}_{i_1} f^{p_2,i_1}_{i_2} f^{p_1,i_2}_{i_3} f^{p_2,i_3} & \E[z^{p_1}_{i}Z^{p_1,p_2}_{i,i_1}Z^{p_2,p_1}_{i_1,i_2}Z^{p_1,p_2}_{i_2,i_3}] & 0 & 0\\
{\forestQ} & \phi_{i} f^{p_1,i}_{i_1} f^{p_2,i_1}_{i_2} f^{p_2,i_2}_{i_3} f^{p_1,i_3} & \E[z^{p_1}_{i}Z^{p_1,p_2}_{i,i_1}Z^{p_2,p_2}_{i_1,i_2}Z^{p_2,p_1}_{i_2,i_3}] & 0 & 0\\
{\forestR} & \phi_{i} f^{p_1,i}_{i_1} f^{p_1,i_1}_{i_2 i_3} f^{p_2,i_2} f^{p_2,i_3} & \E[z^{p_1}_{i}Z^{p_1,p_1}_{i,i_1}Z^{p_1,p_2}_{i_1,i_2}Z^{p_1,p_2}_{i_1,i_3}] & 0 & \frac{1}{4}\\
{\forestS} & \phi_{i} f^{p_1,i}_{i_1} f^{p_2,i_1}_{i_2 i_3} f^{p_1,i_2} f^{p_2,i_3} & \E[z^{p_1}_{i}Z^{p_1,p_2}_{i,i_1}Z^{p_2,p_1}_{i_1,i_2}Z^{p_2,p_2}_{i_1,i_3}] & 0 & 0\\
{\forestT} & \phi_{i} f^{p_1,i}_{i_1 i_2} f^{p_1,i_1} f^{p_2,i_2}_{i_3} f^{p_2,i_3} & \E[z^{p_1}_{i}Z^{p_1,p_1}_{i,i_1}Z^{p_1,p_2}_{i,i_2}Z^{p_2,p_2}_{i_2,i_3}] & 0 & \frac{1}{8}\\
{\forestU} & \phi_{i} f^{p_1,i}_{i_1 i_2} f^{p_2,i_1} f^{p_1,i_2}_{i_3} f^{p_2,i_3} & \E[z^{p_1}_{i}Z^{p_1,p_2}_{i,i_1}Z^{p_1,p_1}_{i,i_2}Z^{p_1,p_2}_{i_2,i_3}] & 0 & \frac{1}{4}\\
{\forestV} & \phi_{i} f^{p_1,i}_{i_1 i_2} f^{p_2,i_1} f^{p_2,i_2}_{i_3} f^{p_1,i_3} & \E[z^{p_1}_{i}Z^{p_1,p_2}_{i,i_1}Z^{p_1,p_2}_{i,i_2}Z^{p_2,p_1}_{i_2,i_3}] & 0 & 0\\
{\forestW} & \phi_{i} f^{p_1,i}_{i_1 i_2 i_3} f^{p_1,i_1} f^{p_2,i_2} f^{p_2,i_3} & \E[z^{p_1}_{i}Z^{p_1,p_1}_{i,i_1}Z^{p_1,p_2}_{i,i_2}Z^{p_1,p_2}_{i,i_3}] & 0 & \frac{1}{4}\\
{\forestX} & \phi_{ij} f^{p_1,i}_{i_1} f^{p_2,i_1}_{i_2} f^{p_2,i_2} f^{p_1,j} & \E[z^{p_1}_{i}Z^{p_1,p_2}_{i,i_1}Z^{p_2,p_2}_{i_1,i_2}z^{p_1}_{j}] & 0 & \frac{1}{4}\\
{\forestY} & \phi_{ij} f^{p_2,i}_{i_1} f^{p_1,i_1}_{i_2} f^{p_2,i_2} f^{p_1,j} & \E[z^{p_2}_{i}Z^{p_2,p_1}_{i,i_1}Z^{p_1,p_2}_{i_1,i_2}z^{p_1}_{j}] & 0 & 0\\
{\forestAB} & \phi_{ij} f^{p_2,i}_{i_1} f^{p_2,i_1}_{i_2} f^{p_1,i_2} f^{p_1,j} & \E[z^{p_2}_{i}Z^{p_2,p_2}_{i,i_1}Z^{p_2,p_1}_{i_1,i_2}z^{p_1}_{j}] & 0 & \frac{1}{4}\\
{\forestBB} & \phi_{ij} f^{p_1,i}_{i_1 i_2} f^{p_2,i_1} f^{p_2,i_2} f^{p_1,j} & \E[z^{p_1}_{i}Z^{p_1,p_2}_{i,i_1}Z^{p_1,p_2}_{i,i_2}z^{p_1}_{j}] & \frac{1}{2} & \frac{1}{2}\\
{\forestCB} & \phi_{ij} f^{p_2,i}_{i_1 i_2} f^{p_1,i_1} f^{p_2,i_2} f^{p_1,j} & \E[z^{p_2}_{i}Z^{p_2,p_1}_{i,i_1}Z^{p_2,p_2}_{i,i_2}z^{p_1}_{j}] & 0 & \frac{1}{4}\\
{\forestDB} & \phi_{ij} f^{p_1,i}_{i_1} f^{p_1,i_1} f^{p_2,j}_{j_1} f^{p_2,j_1} & \E[z^{p_1}_{i}Z^{p_1,p_1}_{i,i_1}z^{p_2}_{j}Z^{p_2,p_2}_{j,j_1}] & 0 & \frac{1}{4}\\
{\forestEB} & \phi_{ij} f^{p_1,i}_{i_1} f^{p_2,i_1} f^{p_1,j}_{j_1} f^{p_2,j_1} & \E[z^{p_1}_{i}Z^{p_1,p_2}_{i,i_1}z^{p_1}_{j}Z^{p_1,p_2}_{j,j_1}] & \frac{1}{2} & \frac{1}{2}\\
{\forestFB} & \phi_{ij} f^{p_1,i}_{i_1} f^{p_2,i_1} f^{p_2,j}_{j_1} f^{p_1,j_1} & \E[z^{p_1}_{i}Z^{p_1,p_2}_{i,i_1}z^{p_2}_{j}Z^{p_2,p_1}_{j,j_1}] & 0 & 0\\
{\forestGB} & \phi_{ijk} f^{p_1,i}_{i_1} f^{p_1,i_1} f^{p_2,j} f^{p_2,k} & \E[z^{p_1}_{i}Z^{p_1,p_1}_{i,i_1}z^{p_2}_{j}z^{p_2}_{k}] & 0 & \frac{1}{2}\\
{\forestHB} & \phi_{ijk} f^{p_1,i}_{i_1} f^{p_2,i_1} f^{p_1,j} f^{p_2,k} & \E[z^{p_1}_{i}Z^{p_1,p_2}_{i,i_1}z^{p_1}_{j}z^{p_2}_{k}] & \frac{1}{2} & \frac{1}{2}\\
{\forestIB} & \phi_{ijkl} f^{p_1,i} f^{p_1,j} f^{p_2,k} f^{p_2,l} & \E[z^{p_1}_{i}z^{p_1}_{j}z^{p_2}_{k}z^{p_2}_{l}] & 1 & 1
\\
\caption{Exotic order conditions of stochastic Runge-Kutta method of the form \eqref{equation:def_RKsto} up to weak order 2. The order conditions do not depend on the dimension of the problem and on the number of noise terms. The sums on all involved indices except $p_1$, $p_2$ are omitted for conciseness.}
\label{table:exotic_order_conditions}
\end{longtable}

\begin{longtable}{C|C|C|C|C}
\text{Dec.\ts forest}&\text{Differential}&\text{RK coefficient}&\text{Itô}&\text{Str.}\\\hline
{\forestJB} & \phi_{i} f^{p_1,i}_{i_1} f^{p_1,i_1}_{i_2} f^{p_1,i_2}_{i_3} f^{p_1,i_3} & \E[z^{p_1}_{i}Z^{p_1,p_1}_{i,i_1}Z^{p_1,p_1}_{i_1,i_2}Z^{p_1,p_1}_{i_2,i_3}] & 0 & \frac{1}{8}\\
{\forestKB} & \phi_{i} f^{p_1,i}_{i_1} f^{p_1,i_1}_{i_2 i_3} f^{p_1,i_2} f^{p_1,i_3} & \E[z^{p_1}_{i}Z^{p_1,p_1}_{i,i_1}Z^{p_1,p_1}_{i_1,i_2}Z^{p_1,p_1}_{i_1,i_3}] & 0 & \frac{1}{4}\\
{\forestLB} & \phi_{i} f^{p_1,i}_{i_1 i_2} f^{p_1,i_1} f^{p_1,i_2}_{i_3} f^{p_1,i_3} & \E[z^{p_1}_{i}Z^{p_1,p_1}_{i,i_1}Z^{p_1,p_1}_{i,i_2}Z^{p_1,p_1}_{i_2,i_3}] & 0 & \frac{3}{8}\\
{\forestMB} & \phi_{i} f^{p_1,i}_{i_1 i_2 i_3} f^{p_1,i_1} f^{p_1,i_2} f^{p_1,i_3} & \E[z^{p_1}_{i}Z^{p_1,p_1}_{i,i_1}Z^{p_1,p_1}_{i,i_2}Z^{p_1,p_1}_{i,i_3}] & 0 & \frac{3}{4}\\
{\forestNB} & \phi_{ij} f^{p_1,i}_{i_1} f^{p_1,i_1}_{i_2} f^{p_1,i_2} f^{p_1,j} & \E[z^{p_1}_{i}Z^{p_1,p_1}_{i,i_1}Z^{p_1,p_1}_{i_1,i_2}z^{p_1}_{j}] & 0 & \frac{1}{2}\\
{\forestOB} & \phi_{ij} f^{p_1,i}_{i_1 i_2} f^{p_1,i_1} f^{p_1,i_2} f^{p_1,j} & \E[z^{p_1}_{i}Z^{p_1,p_1}_{i,i_1}Z^{p_1,p_1}_{i,i_2}z^{p_1}_{j}] & \frac{1}{2} & 1\\
{\forestPB} & \phi_{ij} f^{p_1,i}_{i_1} f^{p_1,i_1} f^{p_1,j}_{j_1} f^{p_1,j_1} & \E[z^{p_1}_{i}Z^{p_1,p_1}_{i,i_1}z^{p_1}_{j}Z^{p_1,p_1}_{j,j_1}] & \frac{1}{2} & \frac{3}{4}\\
{\forestQB} & \phi_{ijk} f^{p_1,i}_{i_1} f^{p_1,i_1} f^{p_1,j} f^{p_1,k} & \E[z^{p_1}_{i}Z^{p_1,p_1}_{i,i_1}z^{p_1}_{j}z^{p_1}_{k}] & 1 & \frac{3}{2}\\
{\forestRB} & \phi_{ijkl} f^{p_1,i} f^{p_1,j} f^{p_1,k} f^{p_1,l} & \E[z^{p_1}_{i}z^{p_1}_{j}z^{p_1}_{k}z^{p_1}_{l}] & 3 & 3
\\
\caption{Isserlis order conditions of stochastic Runge-Kutta method of the form \eqref{equation:def_RKsto} for weak order 2. The order conditions do not depend on the dimension of the problem and on the number of noise terms. The sums on all involved indices except $p_1$ are omitted for conciseness.}
\label{table:Isserlis_order_conditions}
\end{longtable}

The weak order conditions are traditionally derived by use of Butcher series as explained in  \cite{Rossler06rta, Rossler06rkm, Debrabant08bsa}. The order conditions presented in \cref{table:exotic_order_conditions,table:Isserlis_order_conditions} are based on the more recent formalism of exotic forests \cite{Laurent20eab}. This formalism reduces the complexity in the sense that all redundant order conditions are removed, and terms for which the pair of colors (representing the different Brownian motions) appears several times are given as separate forests. Thus each forest represents exactly one unique order condition.
Note that the invariance in law of the moments of the Runge-Kutta coefficients when applying permutations, as prescribed in Assumption \ref{ass:RV_conditions}, is key to the representation of flows with tree structures.
We emphasize that the algebraic and combinatorial details underlying the computation of the order conditions can be skipped by the reader interested only in the creation of numerical methods of high weak order.
The proof of \cref{theorem:RK_conditions_general} and the study of the algebraic structures underlying stochastic numerics are postponed to Section \ref{section:general_conditions}.

\begin{remark}
The previous stochastic B-series formalisms \cite{Rossler06rta} typically rely on forests of the form \forestSB, where $i,j,k,l$ range from $1$ to $m$. The random variable corresponding to this forest will satisfy
\[ \frac{1}{h^2}\E[\int_0^h{W^j}\star dW^i \int_0^h dW^k \int_0^h dW^l] =
  \begin{cases}
    \text{Itô} / \text{Strat.} \\
    0 \; /\; \frac12 & \text{ if } i=j, \; k=l \text{ and } i \not=l, \\
    \frac12 \; /\; \frac12 & \text{ if } i=l, \;j=k \text{ and } i \not=k, \\
    1 \; /\; \frac32 & \text{ if } i=j=k=l, \\
    0 \; /\; 0 & \text{ otherwise. }
  \end{cases}
\]
Thus, the $m^4$ forests of the form \forestTB generate three non-trivial order conditions under \cref{ass:RV_conditions}. Using the exotic forests formalism, these
will appear as three different decorated forests, given by
\[ \pi_{d_1} = \forestUB, \qquad \pi_{d_2} = \forestVB, \qquad \pi_{d_ 3} = \forestWB. \]
The weak order condition corresponding to $\pi_{d_1}$ is then
\[ \E[\sum_{i,j,k,l} z_i^{p_1}Z_{i,j}^{p_1,p_1}z^{p_2}_kz^{p_2}_l] =
  \begin{cases}
    0 & \text{Itô} \\
    \frac12 & \text{Stratonovich}
  \end{cases}
\]
The other two conditions can be found in \cref{table:exotic_order_conditions,table:Isserlis_order_conditions}.
\end{remark}

We say that order conditions of the form $\sum \E[\dots]=0$ are \emph{potentially superfluous} as they can potentially be satisfied automatically with a particular choice of random coefficients. Note that there is a relatively high number of these order conditions in the Itô case.
In the next section, we propose a carefully chosen ansatz for the Runge-Kutta coefficients so that most potentially superfluous order conditions are satisfied automatically.

\subsection{Reduced order conditions for Itô SDEs}
\label{section:reduced_order_ito}
We consider coefficients of the form
\begin{equation}
\label{eq:ItoCoeff}
\begin{tabular}{lll}
 $z^{0}=\alpha \theta_0,\quad$ & $z^{p}=\beta\theta_p,$ \\
 $Z^{0,0}=A^0 \Theta_{0,0},\quad$ & $Z^{0,q}=B^0 \Theta_{0,q},$ \\
 $Z^{p,0}=A^1 \Theta_{p,0},\quad$ & $Z^{p,q}=B^1 \Theta_{p,q}.$
\end{tabular}
\end{equation}
where $\alpha\in \R^{s_1}$, $\beta\in\R^{s_2}$, $A^0\in \R^{s_1,s_1}$, $A^1\in \R^{s_2,s_1}$, $B^0\in \R^{s_1,s_2}$, $B^1\in \R^{s_2,s_2}$ and $(s_1,s_2)$ is the number of stages for the deterministic and stochastic parts of the method.
This choice of coefficients fits in the class of methods \eqref{equation:def_RKsto} by choosing $s=\max(s_1,s_2)$ and filling the missing entries of the Runge-Kutta coefficients by zeros.
The standard methodology mimicking strong expansions and using weak approximations of iterated stochastic integrals is tedious and yields unnecessarily complicated methods. We simplify this approach by instead choosing random variables such that most potentially superfluous order conditions are satisfied automatically. Let $\eta_p$, $p=0,\dots,m$,  and $\theta_p$, $p=1,\dots,m$, be discrete independent random variables satisfying
$$\P(\eta_p=\pm 1)=\frac{1}{2}, \quad
\P(\theta_p=\pm\sqrt{2+\sqrt{3}})=\frac{3-\sqrt{3}}{12},\quad \P(\theta_p=\pm\sqrt{2-\sqrt{3}})=\frac{3+\sqrt{3}}{12}.$$
Define for $c\in(0,\frac12)$
\begin{equation}
\label{eq:RVIto}
\begin{tabular}{lll}
 $\theta_0=1,$ & $\Theta_{0,p}=\theta_p+\eta_p\sqrt{\frac1{2c}-1}, \quad$ & $\Theta_{p,q}=\theta_q(1+\eta_0), \quad q> p\geq 1,$ \\
 $\Theta_{0,0}=1,$ & $\Theta_{p,0}=1-\eta_p\theta_p\sqrt{\frac{2c}{1-2c}}, \quad$ & $\Theta_{p,q}=\theta_q(1-\eta_0), \quad 1\leq q< p$, \\
 $\Theta_{p,p}=-3\theta_p+\theta_p^3$.
\end{tabular}
\end{equation}
These random variables do, for $c$ not too close to $\{0,\frac12\}$, not impact stability more than using Gaussians or iterated stochastic integrals as their moments are of similar (if not smaller) amplitude.

Using the ansatz \eqref{eq:ItoCoeff} and the random variables \eqref{eq:RVIto} in \cref{theorem:RK_conditions_general}, \cref{ass:RV_conditions} is automatically satisfied and we obtain the following result, where most potentially superfluous order conditions vanish,
leaving us with 9 reduced order conditions for weak order two. By choosing carefully the random variables, we got rid of most of the 59 order conditions of \cite{Debrabant09foe} and improved further on the 15 order conditions of \cite{Tang17ews}.
\begin{theorem}
\label{theorem:reduced_RK_conditions_Ito}
The reduced order conditions for weak order one for solving the Itô SDE \eqref{equation:SDE_Ito} by method \eqref{equation:def_RKsto} with method coefficients \eqref{eq:ItoCoeff}, random variables \eqref{eq:RVIto} and $c\in(0,\frac12)$ are:
\begin{multicols}{3}
    \begin{enumerate}
\item $\alpha^\top \ind=1$,\label[cond]{cond:1}
\item $\beta^\top \ind=1$,\label[cond]{cond:2}
    \end{enumerate}
\end{multicols}
\noindent and the additional conditions for weak order two are:
\begin{multicols}{3}
	\begin{enumerate}
\setcounter{enumi}{2}
\item $\alpha^\top A^0 \ind =\frac{1}{2}$,\label[cond]{cond:3}
\item $\alpha^\top B^0 \ind =\frac{1}{2}$,\label[cond]{cond:4}
\item $\alpha^\top (B^0 \ind)^{\diam 2}=c$,\label[cond]{cond:5}
\item $\beta^\top A^1 \ind =\frac{1}{2}$,\label[cond]{cond:6}
\item $\beta^\top B^1 \ind =\frac{1}{2}$,\label[cond]{cond:7}
\item $\beta^\top (B^1 \ind)^{\diam 2}=\frac{1}{4}$,\label[cond]{cond:8}
\item $\beta^\top B^1 B^1\ind =0$\label[cond]{cond:9},
	\end{enumerate}
\end{multicols}
where we use the Hadamard product on vectors in~$\R^s$:
$$y^{1}\diam \dots \diam y^{n}=\bigg(\prod_{k=1}^n y^{k}_i\bigg)_{i=1,\dots,s}.$$
It is also possible to choose $c=\frac12$ when adapting \eqref{eq:RVIto} by $\Theta_{0,p}=\theta_p$, $\Theta_{p,0}=1$, in which case also the following additional order condition needs to be fulfilled for weak order two:
	\begin{enumerate}
\setcounter{enumi}{9}
\item $\beta^\top A^1B^0\ind=0$.\label[cond]{cond:10}
	\end{enumerate}
\end{theorem}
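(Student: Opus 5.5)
The plan is to derive the result directly from \cref{theorem:RK_conditions_general}. Substituting the ansatz \eqref{eq:ItoCoeff} into each entry of \cref{table:exotic_order_conditions,table:Isserlis_order_conditions} factorises every RK coefficient as a deterministic tensor contraction of $\alpha,\beta,A^0,A^1,B^0,B^1$ multiplied by a scalar moment of the random variables $\theta_p,\eta_p,\Theta_{\cdot,\cdot}$. For example,
\[
\E[z^{0}_{i}Z^{0,p_1}_{i,i_1}Z^{0,p_1}_{i,i_2}]=\alpha^\top (B^0\ind)^{\diam 2}\,\E[\Theta_{0,p_1}^2].
\]
The first step is to check \cref{ass:RV_conditions}. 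Permutation invariance with $\sigma(0)=0$ holds because the $\theta_p,\eta_p$ are i.i.d. for $p\ge1$; the only subtlety is the ordering $q>p$ versus $q<p$ in $\Theta_{p,q}$, and this is harmless because $\eta_0$ is symmetric, so $1\pm\eta_0$ have the same joint law. The parity condition follows from the symmetry $(\theta_p,\eta_p)\mapsto(-\theta_p,-\eta_p)$. Under this map $\Theta_{0,p}$, $\Theta_{p,p}$, $z^p$ and $\Theta_{q,p}$ change sign, while $\Theta_{p,0}$ is invariant. Hence a moment is odd in noise $p$ exactly when an odd number of $z^p$ and second indices $r=p$ appear, which is the counting in the assumption.

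The next step is to tabulate the needed moments:
\[
\E[\theta_p^2]=1,\quad \E[\theta_p^4]=3,\quad \E[\theta_p^6]=15,
\]
which can be checked directly from the given two-point values. From these:
\begin{itemize}
\item $\E[\theta_p\Theta_{p,p}]=\E[-3\theta_p^2+\theta_p^4]=0$, and $\E[\theta_p^2\Theta_{p,p}]=0$ by oddness;
\item $\E[\Theta_{0,p}]=0$ and $\E[\Theta_{0,p}^2]=1+(\tfrac1{2c}-1)=\tfrac1{2c}$;
\item $\E[\theta_p\Theta_{0,p}]=1$ and $\E[\Theta_{p,0}]=1$;
\item $\E[\theta_p\Theta_{p,0}\,\theta_p]=1$, while $\E[\Theta_{p,0}\Theta_{0,p}]=1-\sqrt{\tfrac{2c}{1-2c}}\sqrt{\tfrac{1-2c}{2c}}\,\E[\theta_p^2]=0$.
\end{itemize}
The last cancellation is exactly what kills forest \forestG{}, the only non-trivial coupling of $A^1$ with $B^0$. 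This explains why condition~\ref{cond:10} reappears when $c=\tfrac12$ and the $\eta$-corrections are removed.

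With these moments I would go through the tables forest by forest.
\begin{itemize}
\item Order-one forests give conditions~\ref{cond:1}--\ref{cond:2}; the third forest vanishes since $\E[\theta_p\Theta_{p,p}]=0$.
\item Forests \forestD, \forestE, \forestN, \forestIB{} and \forestRB{} reduce to powers of $\alpha^\top\ind$ and $\beta^\top\ind$ times moments $1$ and $3$, so they hold automatically.
\item \forestI{} gives $\alpha^\top(B^0\ind)^{\diam2}/(2c)=\tfrac12$, which is condition~\ref{cond:5}. \forestK{} gives condition~\ref{cond:4}, \forestL{} gives condition~\ref{cond:6}, and \forestD{} gives condition~\ref{cond:3}.
\item The two-colour forests with $p_1\neq p_2$ involve $\Theta_{p_1,p_2}=\theta_{p_2}(1\pm\eta_0)$. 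For \forestBB{}, $\E[\theta_{p_1}^2\theta_{p_2}^2(1\pm\eta_0)^2]=2$, which yields $\beta^\top(B^1\ind)^{\diam2}=\tfrac14$, i.e.\ condition~\ref{cond:8}. For \forestEB{} and \forestHB{}, $\E[(1\pm\eta_0)]=1$ gives $\beta^\top B^1\ind=\tfrac12$, condition~\ref{cond:7}.
\item Forests whose Itô coefficient is $0$ should vanish from one of three sources: a factor $\E[\theta_p\Theta_{p,p}]=0$; products $(1+\eta_0)(1-\eta_0)=0$, which arise when the chain order of $p_1,p_2$ reverses (e.g.\ \forestP, \forestY, \forestFB); or oddness.
\item The one-colour forests of \cref{table:Isserlis_order_conditions} use $\E[\theta^3\Theta_{p,p}]=\E[\theta^6]-3\E[\theta^4]=15-9=6$ and $\E[\theta^2\Theta_{p,p}^2]$. \forestOB{} and \forestPB{} should reduce, via conditions~\ref{cond:7}--\ref{cond:8}, to consistent identities. \forestJB{} and \forestKB{} should produce condition~\ref{cond:9}, $\beta^\top B^1B^1\ind=0$, possibly in combination with the others.
\end{itemize}

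The main obstacle is this last bookkeeping: verifying, for each of the roughly forty forests, that the moment either vanishes or reproduces exactly one of conditions~\ref{cond:1}--\ref{cond:9} with the right constant. In particular, the one-colour and $\Theta_{p,p}$-containing forests must collapse consistently and not produce extra constraints; for instance, a $\Theta_{p,p}$ factor appearing with $\theta_p^3$ contributes $6$ rather than $0$. I would organise this as a table of the moments $\E[\theta^{k}\Theta_{p,p}^{l}]$ and of the $\eta_0$-products. Finally, for $c=\tfrac12$, I would redo only the entries containing $\Theta_{0,p}$ or $\Theta_{p,0}$. The only new non-vanishing entry is \forestG{}, with $\E[\theta_p^2]=1$, and its zero target gives condition~\ref{cond:10}.
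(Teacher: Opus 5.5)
\textbf{Where the proposal stands.} Your route is the paper's route: plug the ansatz into \cref{table:exotic_order_conditions,table:Isserlis_order_conditions}, factor each coefficient into a tensor contraction times a scalar moment, and evaluate. Several parts are correct:
\begin{itemize}
\item the order-one forests;
\item the forests containing $\Theta_{0,p}$ or $\Theta_{p,0}$;
\item the two-colour forests, which vanish through $(1+\eta_0)(1-\eta_0)=0$ or $\E[\theta_p\Theta_{p,p}]=0$;
\item the $c=\frac12$ variant.
\end{itemize}

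\textbf{The gap: the sixth moment of $\theta_p$.} The argument breaks at the one-colour Isserlis forests, which is exactly where the unusual law of $\theta_p$ matters. That law has four atoms $\pm\sqrt{2\pm\sqrt3}$ and is not Gaussian beyond the fourth moment. On its support $\theta_p^4=4\theta_p^2-1$, so $\E[\theta_p^6]=4\cdot 3-1=11$, not $15$. In fact, on the support $\theta_p\Theta_{p,p}=\theta_p^2-1$ and $\Theta_{p,p}^2=2$ identically, which gives
\[
\E[\theta_p\Theta_{p,p}]=0,\qquad \E[\theta_p^3\Theta_{p,p}]=2,\qquad \E[\theta_p^2\Theta_{p,p}^2]=2,\qquad \E[\theta_p\Theta_{p,p}^3]=0 .
\]
With your value $\E[\theta_p^3\Theta_{p,p}]=6$, the forest \forestQB{} would impose $6\,\beta^\top B^1\ind\,(\beta^\top\ind)^2=1$, i.e.\ $\beta^\top B^1\ind=\frac16$. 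This contradicts \cref{cond:7}, which \forestHB{} forces. Carried out as written, your bookkeeping would refute the theorem rather than prove it. With the correct value $2$, \forestQB{} reproduces \cref{cond:7}.

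\textbf{Consequences for the remaining one-colour forests.} The same moments show that you have the source of \cref{cond:9} wrong.
\begin{itemize}
\item \forestJB{}, \forestKB{}, \forestLB{} and \forestMB{} carry the factor $\E[\theta_p\Theta_{p,p}^3]=0$ and vanish automatically. They involve $B^1B^1B^1\ind$, $B^1(B^1\ind)^{\diam2}$, \dots, so they could never yield $\beta^\top B^1B^1\ind=0$ anyway.
\item \cref{cond:9} comes from \forestNB{}, namely $2\,(\beta^\top\ind)\,\beta^\top B^1B^1\ind=0$.
\item \forestOB{} and \forestPB{} give $2\beta^\top(B^1\ind)^{\diam2}=\frac12$ and $2(\beta^\top B^1\ind)^2=\frac12$. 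These are consistent with \cref{cond:7,cond:8} only because $\E[\theta_p^2\Theta_{p,p}^2]=2$; Gaussian moments would give $42$ here and an inconsistent system.
\end{itemize}
The missing idea is that $\theta_p$ is engineered so that $\Theta_{p,p}^2\equiv 2$ and $\theta_p\Theta_{p,p}=\theta_p^2-1$. This is what makes the Isserlis conditions collapse onto \cref{cond:7,cond:8,cond:9}.

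\textbf{Minor slips.}
\begin{itemize}
\item The moment that kills \forestG{} is $\E[\theta_p\Theta_{p,0}\Theta_{0,p}]$; this is what your displayed formula actually computes.
\item \forestE{}, not \forestD{}, is the automatic one.
\item \forestEB{} involves $\E[(1\pm\eta_0)^2]=2$ and gives only $(\beta^\top B^1\ind)^2=\frac14$. The linear \cref{cond:7} comes from \forestHB{}.
\item Your $\eta_0\mapsto-\eta_0$ argument gives permutation invariance only for moments with at most two distinct noise indices. This suffices for weak order two, since such forests carry at most two lianas, but it does not give all moments of \cref{ass:RV_conditions} when $m\ge 3$.
\end{itemize}
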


Note that \cref{cond:9} and, in case  $c=\frac12$, also \cref{cond:10}, in Theorem \eqref{theorem:reduced_RK_conditions_Ito}, are still derived from potentially superfluous conditions.
Note also that for $c<\frac12$, we always can choose $A^0=B^0$ and $A^1=B^1$, and that the parameters $\alpha$, $A^0$ and $B^0$ on the one hand and $\beta$, $A^1$ and $B^1$ are independent of each other. A natural choice for $c$ is $c=\frac{1}{4}$, which ensures that \cref{cond:1,cond:3,cond:4,cond:5} and \cref{cond:2,cond:6,cond:7,cond:8} are congruent and we thus can choose $B^0=B^1$ and $\alpha=\beta$. Another natural choice would be $c=\frac13$, which allows for a method with $A^0=B^0$ already fulfilling one of the deterministic order three conditions.
Choosing $c=\frac12$ on the other hand reduces the number of random variables needed per step from $2m+1$ to $m+1$.

\begin{remark}The coefficients \eqref{eq:ItoCoeff} do not allow for weak order three methods, as
the following second and third order conditions are contradictive:
  \begin{align*}
    \forestXB && (\beta^\top \ind)^2 \E [\theta_p^2] = 1 &&
    \Rightarrow &&& \beta^\top \ind \neq0 \text{ and } \E[\theta_p^2] \neq 0, \\
    \forestYB && (\beta^\top \ind)(\beta^\top B^1B^1\ind)
    \E[\theta_p^2\Theta_{pp}^2] = 0 && \Rightarrow
    &&& \beta^\top \ind=0 \text{ or } \beta^\top B^1 B^1\ind = 0 \\
     &&  &&   &&& \text{ or }  \E[\theta_p^2\Theta_{pp}^2] = 0, \\
    \forestAC && (\beta^\top B^1 \ind )^2 \E[\theta_p^2\Theta_{pp}^2] = \frac{1}{2}
    && \Rightarrow &&&
    \beta^\top B^1 \ind  \neq 0 \text{ and } \E[\theta_p^2\Theta_{pp}^2] \neq 0, \\
    \forestBC &&
    (\beta^\top B^1 B^1\ind)^2  \E[\theta_p^2\Theta_{pp}^4]=\frac16
    && \Rightarrow &&& \beta^\top B^1 B^1\ind \neq 0 \text{ and }
    \E[\theta_p^2\Theta_{pp}^4] \neq 0.
  \end{align*}
 Hence a more general ansatz for third order Runge-Kutta coefficients is needed.
\end{remark}

\subsection{Reduced order conditions for Stratonovich SDEs}

There are fewer potentially superfluous order conditions in the Stratonovich case, which prevents us from removing most of the conditions with a certain choice of random variables. We still present a substantial reduction of order conditions compared to the literature.
We consider coefficients of the form
\begin{equation}\label{eq:StratoCoeff}
\begin{tabular}{lll}
 $z^{0}=\alpha \theta_0,\quad$ & $z^{p}=\beta\theta_p,$ \\
 $Z^{0,0}= A^0 \Theta_{0,0},\quad$ & $Z^{0,q}=B^0 \Theta_{0,q},$ \\
 $Z^{p,0}= A^1 \Theta_{p,0},\quad$ & $Z^{p,q}=B^1 \Theta_{p,q}\ind_{p\neq q}+\hat{B}^1 \Theta_{p,p}\ind_{p=q}.$
\end{tabular}
\end{equation}
Let $\eta_p$, $p=0,\dots,m$, and $\theta_p$, $p=1,\dots,m$, be discrete independent random variables satisfying
$$\P(\eta_p=\pm 1)=\frac{1}{2}, \quad
\P(\theta_p=\pm\sqrt{3})=\frac{1}{6},\quad \P(\theta_p=0)=\frac{2}{3}.$$
Define
\begin{equation}
\label{eq:RVStrato}
\begin{tabular}{lll}
 $\theta_0=1,$ & $\Theta_{0,p}=\theta_p+\eta_p\sqrt{\frac1{2c}-1}, \quad$ & $\Theta_{p,q}=\theta_q(1+\eta_0), \quad q> p\geq 1,$ \\
 $\Theta_{0,0}=1, \quad$ & $\Theta_{p,0}=1-\eta_p\theta_p\sqrt{\frac{2c}{1-2c}}, \quad$ & $\Theta_{p,q}=\theta_q(1-\eta_0), \quad 1\leq q< p.$\\
 $\Theta_{p,p}=\theta_p, \quad$ &  &
\end{tabular}
\end{equation}
Using the ansatz \eqref{eq:StratoCoeff} and the random variables \eqref{eq:RVStrato} in \cref{theorem:RK_conditions_general}, \cref{ass:RV_conditions} is automatically satisfied and we obtain the following 26 reduced order conditions. These conditions improve significantly on the 55 order conditions in \cite{Rossler07sor}.
\begin{theorem}
\label{theorem:reduced_RK_conditions_Strato}
The reduced order conditions for weak order one for solving the Stratonovich SDE \eqref{equation:SDE_Strato} by method \eqref{equation:def_RKsto} with method coefficients \eqref{eq:StratoCoeff}, random variables \eqref{eq:RVStrato} and $c\in(0,\frac12)$ are:
\begin{multicols}{3}
	\begin{enumerate}
\item $\alpha^\top \ind=1$,\label[cond]{cond:Strato1}
\item $\beta^\top \ind=1$,
\item $\beta^\top \hat{B}^1 \ind =\frac{1}{2}$,
	\end{enumerate}
\end{multicols}
\noindent and the additional conditions for weak order two are:
\begin{multicols}{3}
	\begin{enumerate}[leftmargin=*]
\setcounter{enumi}{3}
\item $\alpha^\top A^0 \ind =\frac{1}{2}$,
\item $\alpha^\top B^0 \ind =\frac{1}{2}$,\label[cond]{cond:Strato5}
\item $\alpha^\top (B^0 \ind)^{\diam 2} =c$,\label[cond]{cond:Strato6}
\item $\alpha^\top B^0 \hat{B}^1 \ind =\frac{1}{4}$,
\item $\beta^\top A^1 \ind =\frac{1}{2}$,
\item $\beta^\top( \hat{B}^1 \ind \diam A^1 \ind) =\frac{1}{4}$,
\item $\beta^\top \hat{B}^1 A^1 \ind =\frac{1}{4}$,
\item $\beta^\top B^1 \ind =\frac{1}{2}$,
\item $\beta^\top (B^1 \ind)^{\diam 2} =\frac{1}{4}$,
\item $\beta^\top \hat{B}^1 B^1 \hat{B}^1\ind =\frac{1}{8}$,
\item $\beta^\top( \hat{B}^1\ind \diam B^1 \hat{B}^1\ind) =\frac{1}{8}$,
\item $\beta^\top (\hat{B}^1 \ind \diam (B^1 \ind)^{\diam 2}) =\frac{1}{8}$,
\item $\beta^\top( B^1\ind \diam \hat{B}^1 B^1\ind) =\frac{1}{8}$,
\item $\beta^\top \hat{B}^1 (B^1 \ind)^{\diam 2} =\frac{1}{8}$,
\item $\beta^\top( B^1 \ind \diam \hat{B}^1 \ind) =\frac{1}{4}$,
\item $\beta^\top \hat{B}^1 B^1\ind =\frac{1}{4}$,
\item $\beta^\top B^1 \hat{B}^1\ind =\frac{1}{4}$,
\item $\beta^\top( \hat{B}^1\ind \diam \hat{B}^1 \hat{B}^1\ind) =\frac{1}{8}$,
\item $\beta^\top \hat{B}^1 (\hat{B}^1 \ind)^{\diam 2} =\frac{1}{12}$,
\item $\beta^\top \hat{B}^1 \hat{B}^1 \hat{B}^1\ind =\frac{1}{24}$,
\item $\beta^\top (\hat{B}^1\ind)^{\diam 3} =\frac{1}{4}$,
\item $\beta^\top (\hat{B}^1 \ind)^{\diam 2} =\frac{1}{3}$,
\item $\beta^\top \hat{B}^1 \hat{B}^1\ind =\frac{1}{6}$.
	\end{enumerate}
\end{multicols}
It is also possible to choose $c=\frac12$ when adapting \eqref{eq:RVStrato} by $\Theta_{0,p}=\theta_p$, $\Theta_{p,0}=1$, in which case also the following additional order condition needs to be fulfilled for weak order two:
	\begin{enumerate}
\setcounter{enumi}{26}
\item $\beta^\top A^1B^0\ind=0$.\label[cond]{cond:27}
	\end{enumerate}
\end{theorem}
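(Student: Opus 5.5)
The proof applies \cref{theorem:RK_conditions_general} directly: substitute the ansatz \eqref{eq:StratoCoeff} with the random variables \eqref{eq:RVStrato} into every exotic and Isserlis condition of \cref{table:exotic_order_conditions,table:Isserlis_order_conditions}, Stratonovich column. Each Runge-Kutta coefficient then factors as a deterministic tensor contraction of $\alpha,\beta,A^0,A^1,B^0,B^1,\hat B^1$ times a moment of the $\theta_p,\eta_p$. The conditions of \cref{theorem:reduced_RK_conditions_Strato} are exactly those where this moment is nonzero, and the right-hand sides are the Stratonovich values divided by the moments.

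\textbf{Step 1: \cref{ass:RV_conditions}.} The laws of $(\theta_p,\eta_p)_{p\ge1}$ are i.i.d., so they are invariant under relabelling of the colours $1,\dots,m$. The only asymmetric ingredient is the ordering $q>p$ versus $q<p$ in $\Theta_{p,q}$. This is exchanged by $\eta_0\mapsto-\eta_0$, which preserves the law of $\eta_0$, so joint moments are invariant under permutations $\sigma$. For the parity condition, note that $\theta_p$ and $\eta_p$ are symmetric and that every coefficient carries an odd function of $(\theta_q,\eta_q)$ exactly when its second colour index is $q\neq0$: $z^p$, $\Theta_{0,p}$, $\Theta_{p,q}$ and $\Theta_{p,p}$ are odd, while $\Theta_{p,0}$ is even under $(\theta_p,\eta_p)\mapsto(-\theta_p,-\eta_p)$. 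Consequently, if the total count of nonzero second indices is odd, some colour appears an odd number of times and the expectation vanishes.

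\textbf{Step 2: moments.} We compute the moments we need: $\E[\theta_p^2]=1$, $\E[\theta_p^4]=3$, $\E[\eta_p]=0$, $\E[\Theta_{0,p}^2]=1+(\frac1{2c}-1)=\frac1{2c}$, $\E[\theta_p\Theta_{0,p}]=1$, $\E[\Theta_{p,0}]=1$, $\E[\theta_p\Theta_{p,0}\theta_p]=1$ (the cross term has a zero mean in $\eta_p$), and $\E[\theta_p\Theta_{p,0}\Theta_{p,p}]$ similar. Then $\E[(1\pm\eta_0)^2]=2$ and $\E[(1+\eta_0)(1-\eta_0)]=0$.

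\textbf{Step 3: going through the tables.} Order one gives conditions 1--3 from \forestA, \forestB, \forestC. For \forestB we use $\E[z^{p}_iz^p_j]=\beta_i\beta_j$. For \forestC, $\E[\theta_p\Theta_{p,p}]=\E[\theta_p^2]=1$, which gives $\beta^\top\hat B^1\ind=\frac12$.

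At order two, the deterministic forests and those with a single colour $p_1$ come next. Take \forestK: $\E[\Theta_{0,p}\theta_p]=1$ gives $\alpha^\top B^0\ind=\frac12$. Take \forestI: $\frac1{2c}\,\alpha^\top(B^0\ind)^{\diam2}=\frac12$, which is condition~\ref{cond:Strato6}. For \forestG, the factor $\E[\theta_p\Theta_{p,0}\Theta_{0,p}]$ contains $\eta_p$-cross terms that cancel to $1-1=0$ by the choice of constants $\sqrt{\frac1{2c}-1}\cdot\sqrt{\frac{2c}{1-2c}}=1$. This cancellation is precisely why those constants were chosen. When $c=\frac12$ the $\eta$ terms are absent and \forestG survives, giving condition~27.

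The two-colour forests with $p_1\ne p_2$ whose Stratonovich value is $0$ (\forestP, \forestQ, \forestS, \forestV, \forestY, \forestFB) all contain a pair $\Theta_{p_1,p_2}\Theta_{p_2,p_1}$ or an unpaired odd colour. They therefore vanish through $\E[(1+\eta_0)(1-\eta_0)]=0$ or through odd moments. The surviving two-colour forests pick up the factor $\E[(1\pm\eta_0)^2]=2$, which yields the halved right-hand sides $\frac18$ and $\frac14$. The Isserlis forests use $\E[\theta_p^4]=3$, and $\Theta_{p,p}=\theta_p$ turns $\hat B^1$ powers into $\theta_p$ powers, which gives conditions 21--26 after division by 3. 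Finally, we check that duplicate reduced conditions coincide: for example, \forestQB and \forestRB are consistent with $\beta^\top\ind=1$ and $\beta^\top\hat B^1\ind=\frac12$, and so are implied by the earlier conditions.

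The main obstacle is bookkeeping. Around 45 forests must be expanded, and for each one the orientation of every $\Theta_{p_1,p_2}$ must be tracked so that the $\eta_0$ factors combine correctly. I would organise this by the colour pattern along each edge, and verify the cancellations symbolically.
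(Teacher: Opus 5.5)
Your route is the paper's own: insert \eqref{eq:StratoCoeff} and \eqref{eq:RVStrato} into the tables of \cref{theorem:RK_conditions_general} and read off the coefficients. The paper gives no more detail than that. Several of your computations are correct:
\begin{itemize}
\item the moments in Step~2;
\item the single-colour and Isserlis conditions;
\item the cancellation for \forestG;
\item the vanishing of \forestP, \forestQ, \forestS, \forestV, \forestY, \forestFB.
\end{itemize}

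\textbf{The factor-2 claim is wrong.} You say every surviving two-colour forest picks up $\E[(1\pm\eta_0)^2]=2$. That only happens when the coefficient contains two mixed-colour factors of the same orientation (two $Z^{p_1,p_2}$, or two $Z^{p_2,p_1}$). These forests are \forestR, \forestU, \forestW, \forestBB, \forestEB, and they give conditions 17, 16, 15, 12 and the square of condition 11. The forests with a single mixed factor are \forestO, \forestT, \forestX, \forestAB, \forestCB, \forestHB. Their moment is $\E[\theta_{p_1}^2\theta_{p_2}^2(1\pm\eta_0)]=1$, so their right-hand sides are the Stratonovich values \emph{unchanged}. They are the source of conditions 13, 14, 20, 19, 18, 11. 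Followed literally, your rule gives $\beta^\top B^1\ind=\frac14$ and $\beta^\top\hat B^1B^1\hat B^1\ind=\frac1{16}$, which contradicts the statement.

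The reduced list is also not obtained forest by forest as ``Stratonovich value divided by moment''. \forestB and \forestEB only give $(\beta^\top\ind)^2=1$ and $(\beta^\top B^1\ind)^2=\frac14$. Taking $\beta^\top\ind=1$ is a normalisation, which is harmless since you only need sufficiency. The sign of $\beta^\top B^1\ind$ is then fixed by the factor-one forest \forestHB.

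\textbf{Step 1 only works for two colours.} With two colours, a relabelling either keeps or reverses their order, and $\eta_0\mapsto-\eta_0$ handles the reversal. With three or more colours, a permutation can reverse some pairs and not others, and the invariance in \cref{ass:RV_conditions} actually fails. For example,
\[
\E[z^2_iz^3_jZ^{1,2}_{k,l}Z^{1,3}_{k',l'}]=\beta_i\beta_jB^1_{k,l}B^1_{k',l'}\E[\theta_2^2\theta_3^2(1+\eta_0)^2]=2\beta_i\beta_jB^1_{k,l}B^1_{k',l'}.
\]
Its image under $\sigma=(1\,2)$ contains $\Theta_{2,1}\Theta_{2,3}=\theta_1\theta_3(1-\eta_0)(1+\eta_0)$, so it vanishes. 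The paper's assertion that the assumption is ``automatically satisfied'' overreaches in the same way.

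This does not affect weak order two. By your parity argument, which is correct, every nonvanishing term up to $h^2$ involves at most two distinct colours. The $h^{1/2}$, $h^{3/2}$ and $h^{5/2}$ terms vanish, and the $h^3$ remainder only needs bounded moments. So state Step~1 in this two-colour form rather than for all bijections $\sigma$.
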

Analogously to the Itô case, one can reduce the number of stochastic variables needed per step from $2m+1$ to $m+1$ by adding \cref{cond:27}.

\section{New second order stochastic Runge-Kutta methods}
\label{section:new_methods}

We use the reduced order conditions of Section \ref{section:reduced_order_conditions} to propose a handful of new stochastic methods with the minimal number of function evaluations for solving \eqref{equation:SDE_Ito}-\eqref{equation:SDE_Strato} with second weak order.
We focus on explicit methods and methods with high deterministic order. Further IMEX methods are presented in Appendix \ref{section:IMEX}.

\subsection{Optimal stochastic Runge-Kutta methods}

Thanks to the reduced order conditions, we propose a variety of new simple stochastic Runge-Kutta methods of weak order two.
We emphasize that, similarly to the deterministic setting, an explicit method needs at least two evaluations of $f^0$ and two evaluations of each $f^p$ per step.
The Butcher tableaux are written in the following ways for Itô and Stratonovich.
\[
  \text{Itô:}\quad \begin{array}{c|c}
    A^0 & B^0 \\ \hline A^1 & B^1 \\ \hline \alpha^\top & \beta^\top
  \end{array}, \qquad
  \text{Stratonovich:}\quad \begin{array}{c|c|c}
    A^0 & B^0 \\ \hline A^1 & B^1 & \hat{B}^1 \\ \hline \alpha^\top & \beta^\top
  \end{array}
\]

\paragraph*{Itô explicit}
A $(2,2)$-stages method is the following mix of the Heun and explicit midpoint methods.
\[
 \begin{array}{rl}
X_{n+1}&=X_n+\frac{h}{2}f^0(X_n)+\frac{h}{2}f^0\Big(X_n+hf^0(X_n)\\
&+\sqrt{h}\sum_{q=1}^m\theta_{q}f^q(X_n)\Big)\\
&+\sqrt{h} \sum_{p=1}^m\theta_p f^p\Big(X_n+\frac{h}{2}f^0(X_n)
+\frac{\sqrt{h}}{2}\sum_{q=1}^m\Theta_{p,q}f^q(X_n)\Big)
  \end{array}
  \
  \begin{array}{cc|cc}
    0 & 0 & 0 & 0 \\
    1 & 0 & 1 & 0 \\ \hline
    0 & 0 & 0 & 0 \\
    \frac12 & 0 & \frac{1}{2} & 0 \\ \hline
    \frac12 & \frac12 & 0 & 1
  \end{array}
\]
This method, in the following denoted by \Method{AAK4}, is an improvement of the methods presented both in \cite{Debrabant09foe} and \cite{Tang17ews}, as it uses only $m+1$ random variables per step and attains order 2 for solving \eqref{equation:SDE_Ito} (with $c=\frac12$ in \cref{theorem:reduced_RK_conditions_Ito}), with the optimal number of stages.

\paragraph*{Itô implicit}
When constructing an $(1,2)$-stages method of order 2 for solving \eqref{equation:SDE_Ito}, we observe that \cref{cond:1} in \cref{theorem:reduced_RK_conditions_Ito} implies that $\alpha=1$, which together with \cref{cond:4,cond:5} implies that we must choose $c=\frac14$ in this case. A possible method is then the following:
\[
 \begin{array}{rl}
H_1^0&=X_n+\frac{h}{2}f^0(H_1^0)+\frac{\sqrt{h}}{2}\sum_{q=1}^m\Theta_{0,q}f^q(H_1^q)\\
H_1^p&=X_n+\sqrt{h}\sum_{q=1}^m\Theta_{p,q}f^q(H_1^q)\\
H_2^p&=X_n+\frac{h}{2}\Theta_{p,0}f^0(H_1^0)+\sqrt{h}\sum_{q=1}^m\Theta_{p,q}\Big(f^q(H_2^q)-\frac{1}{2} f^q(H_1^q)\Big)\\
X_{n+1}&=X_n+hf^0(H_1^0)+\sqrt{h}\sum_{p=1}^m\theta_p f^p(H_2^p)
  \end{array}
  \
  \begin{array}{c|cc}
    \frac12 & \frac12 & 0 \\ \hline
    0 & 1 & 0 \\
    \frac12 & -\frac12 & 1 \\ \hline
    1 & 0 & 1
  \end{array}
\]
An order 2 method with one stage for the stochastic part is impossible in general as the order conditions of \cref{theorem:RK_conditions_general} for the forests $\forestCC$ and $\forestDC$ cannot be satisfied simultaneously. If the noise is additive, it is however straightforwardly achieved \cite{Debrabant10rkm,Laurent20eab}.

\paragraph*{Stratonovich explicit}
 We propose the following $(2,4)$-stages explicit method with $c=\frac12$ of weak order two  for solving \eqref{equation:SDE_Strato}, which is a mild improvement on the methods in \cite{Rossler07sor} as it uses fewer stages in the deterministic part, and only $m+1$ random variables per step instead of $2m-1$.
 \[
   \begin{array}{cc|cccc|cccc}
     0 & 0 & 0 & 0 & 0 & 0 \\
     1 & 0 & 0 & 1 & 0 & 0 \\
     \hline
     0 & 0 & 0 & 0 & 0 & 0 & 0 & 0 & 0 & 0 \\
     \frac12 & 0 & \frac12 & 0 & 0 & 0 & \frac12 & 0 & 0 & 0 \\
     \frac12 & 0 & -1 & \frac32 & 0 & 0 & -\frac12 & \frac12 & 0 & 0 \\
     \frac12 & 0 & -1 & \frac32 & 0 & 0 & -\frac32 & \frac32 & 1 & 0 \\ \hline
     \frac12 & \frac12 & 0 & \frac23 & \frac16 & \frac16
   \end{array}
 \]
 \vskip-4ex
 The number of stochastic stages is minimal thanks to the condition for $\forestEC$ in \cref{theorem:RK_conditions_general}.
 
\paragraph*{Stratonovich implicit}
Analogously to the Itô case, it follows by \cref{cond:Strato1,cond:Strato5,cond:Strato6} in \cref{theorem:reduced_RK_conditions_Strato} that a $(1,2)$-stage method for solving \eqref{equation:SDE_Strato} with weak order two requires $c=\frac14$. An example for such a method is given below. Observe that both $A^0$ and $B^1$ correspond to well-known Gauss–Legendre methods.
A $(1,1)$-stages method cannot be achieved as the order conditions associated to $\forestFC$ and $\forestGC$ in \cref{theorem:RK_conditions_general} would bring a contradiction.
\[
  \begin{array}{c|cc|cc}
    \frac12  & \frac14 & \frac14 \\
    \hline
    \frac12 & \frac14 & \frac14 & \frac14 & \frac{3+2\sqrt{3}}{12} \\
    \frac12 & \frac14 & \frac14 & \frac{3-2\sqrt{3}}{12} & \frac14 \\ \hline
    1 & \frac12 & \frac12
  \end{array}
\]

\subsection{Stochastic Runge-Kutta methods with high deterministic order}

Let us now present a handful of explicit methods with second weak order, optimal number of function evaluations, and third deterministic order, as these methods often display improved error constants and rates of convergence for stochastic perturbation of deterministic dynamics.

\paragraph*{Itô explicit of deterministic order 3}
We propose the following two methods \Method{AAK5} and \Method{AAK3} (respectively with $c=\frac12$
and $c=\frac13$) of order 2 and deterministic order 3. Note that the deterministic part of these
methods coincides with Kutta's RK32 \cite{Butcher16nmf}.
\[
  \begin{array}{ccc|cc}
      0 & 0 & 0 & 0 & 0 \\
\frac12 & 0 & 0 & \frac35-\frac{\sqrt{6}}{10} & 0 \\
     -1 & 2 & 0 & \frac35+\frac25\sqrt{6} & 0\\ \hline
    0 & 0 & 0 & 0 & 0 \\
    \frac12 & 0 & 0 & \frac12 & 0 \\ \hline
    \frac16 & \frac23 & \frac16 & 0 & 1
  \end{array}
  \quad \quad \quad \quad \quad 
  \begin{array}{ccc|cc}
    0 & 0 & 0 & 0 & 0 \\
    \frac12 & 0 & 0 & \frac12 & 0 \\
    -1 & 2 & 0 & 1 & 0\\ \hline
    0 & 0 & 0 & 0 & 0 \\
    \frac12 & 0 & 0 & \frac12 & 0 \\ \hline
    \frac16 & \frac23 & \frac16 & 0 & 1
  \end{array}
\]

\paragraph*{Stratonovich explicit of deterministic order 3 ($c=\frac12$)}\mbox{}\par
\[
  \begin{array}{ccc|cccc|cccc}
    0 & 0 & 0 & \frac12 - \frac{\sqrt{3}}{2} & 0 & 0 & 0 \\
    \frac13 & 0 & 0 & \sqrt{3}-1 & 0 & 0 & 0 \\
    0 & \frac23 & 0 & \frac16 + \frac{\sqrt{3}}{6} & 0 & 0 & \frac13 \\
    \hline
    0 & 0 & 0 & 0 & 0 & 0 & 0 & 0 & 0 & 0 & 0 \\
    \frac12 & 0 & 0 & \frac12 & 0 & 0 & 0 & \frac12 & 0 & 0 & 0 \\
    -\frac12 & 1 & 0 & -1 & \frac32 & 0 & 0 & -\frac12 & \frac12 & 0 & 0 \\
    \frac12 & 0 & 0 & -\frac12 & \frac32 & -\frac12 & 0 & -\frac32 & \frac32 & 1 & 0 \\ \hline
    \frac14 & 0 & \frac34 & 0 & \frac23 & \frac16 & \frac16
  \end{array}
\]

\section{Numerical applications}
\label{section:numerical_experiments}

In this section, in \cref{ex:KP,ex:DR} we first numerically confirm the theoretical findings on two test problems and conclude then with \cref{eq:invmeasuresampling} illustrating an application to invariant measure sampling.

In \cref{ex:KP,ex:DR}, we will compare the computational effort and accuracy of the newly derived methods \Method{AAK4}, \Method{AAK5} and \Method{AAK3} with some integrators from the literature, namely \Method{DRI1} from \cite{Debrabant09foe}, \Method{W2Ito1} from \cite{Tang17ews}, \Method{RI5} and \Method{RI6} from \cite{Rossler09sor}. Note that \Method{RI6} coincides for $m=1$ with Platen's second order method \cite{Platen84zza}. 
The numbers $N_d$ and $N_s$ of evaluations of the integrands $f^0$ and $f^p$, $p=1,\dots,m$, as well as the number $N_r$ of random variables per step of the new methods are lower as presented in \cref{table:comparison_methods}. In the numerical illustrations, the solution $\E[\phi(X(t))]$ will be approximated with step sizes $2^{-1}, \ldots, 2^{-5}$. We will define the computational effort per time step as $N_d+mN_s+N_r$ \cite{Debrabant09foe} and the numerically observed order of convergence $\hat{p}$ as slope of the regression line in the double--logarithmic plot of the approximated weak error vs.\ step size. The expectation in the error term ${|\E}[\phi(X_N) - \phi(X(T_N))]|$ will be approximated by Monte Carlo simulation ${|\hat{\E}}[\phi(X_N) - \phi(X(T_N))]|$, using 40000 batches of 25000 simulations.
\begin{table}[htb!]
\begin{center}
\begin{tabular}{c|cccccc}
  Method&$p_D$&$p_S$&$N_d$&$N_s$&$N_r$ if $m=1$&$N_r$ for $m>1$\\\hline
  \Method{RI6}&2&2&2&5&1&$2m-1$\\
  \Method{AAK4}&2&2&2&2&1&$m+1$\\\hline
  \Method{RI5},\Method{DRI1}&3&2&3&5&1&$2m-1$\\
  \Method{W2Ito1}&3&2&3&3&2&$m+2$\\
  \Method{AAK3}
  &3&2&3&2&2&$2m+1$\\
  \Method{AAK5}&3&2&3&2&1&$m+1$
\end{tabular}
\end{center}
\caption{Comparison of the considered explicit stochastic Runge-Kutta methods for solving \eqref{equation:SDE_Ito} in terms of deterministic order $p_D$ and  stochastic order $p_S$, numbers $N_d$ and $N_s$ of function evaluations and number $N_r$ of random variables.}
\label{table:comparison_methods}
\end{table}
\tikzexternalenable
\begin{ex}\label{ex:KP}As first example, we consider the non-linear
SDE~\cite{Kloeden92nso,Mackevicius01sow,Debrabant09foe}
\begin{equation} \label{Simu:nonlinear-SDE2}
    dX(t) = \left( \tfrac{1}{2} X(t) + \sqrt{X(t)^2 + 1} \right) \,
    dt + \sqrt{X(t)^2 + 1} \, dW(t), \qquad X(0)=0,
\end{equation}
on the time interval $I=[0,2]$ with the solution $X(t) = \sinh (t +
W(t))$. Choosing $\phi(x)=p(\arsinh(x))$, where $p(z) = z^3 -
6z^2 + 8z$, the expectation of the solution is given by
\begin{equation}
    \E[\phi(X(t))] = t^3 - 3t^2 + 2t.
\end{equation}
The simulation results at time $t=2$, presented in \cref{fig:exKP22,fig:exKP32}, show that with one noise term, the new methods behave similarly in terms of convergence to the ones from the literature, with a slightly reduced cost. The method \Method{AAK3} displays higher convergence rate than expected.

\initcompeffort{1}
\numfigure{\ConvNl}{e2}{AAK4,Platen}{Numerical results for \cref{ex:KP}, methods of order (2,2)\label{fig:exKP22}}
\numfigure{\ConvNl}{e2}{AAK5,AAK3,DRI1,RI5,W2Ito1}{Numerical results for \cref{ex:KP}, methods of order (3,2)\label{fig:exKP32}}
\end{ex}

\begin{ex}\label{ex:DR}As second example, we consider a nonlinear SDE with 10 Wiener processes \cite{Debrabant09foe,Tang17ews}
\begin{eqnarray}\nonumber
\lefteqn{dX(t)=X(t)\, dt+
\frac1{10}\sqrt{X(t)+\frac12}\, dW_1(t)+
\frac1{15}\sqrt{X(t)+\frac14}\, dW_2(t)}\\
&&+\nonumber
\frac1{20}\sqrt{X(t)+\frac15}\, dW_3(t)+
\frac1{25}\sqrt{X(t)+\frac1{10}}\, dW_4(t)+
\frac1{40}\sqrt{X(t)+\frac1{20}}\, dW_5(t)
\\&&+\nonumber
\frac1{25}\sqrt{X(t)+\frac12}\, dW_6(t)+
\frac1{20}\sqrt{X(t)+\frac14}\, dW_7(t)+
\frac1{15}\sqrt{X(t)+\frac15}\, dW_8(t)\\
&&+\label{Simu:nonlinear-SDE3}
\frac1{20}\sqrt{X(t)+\frac1{10}}\, dW_9(t)+
\frac1{25}\sqrt{X(t)+\frac1{20}}\, dW_{10}(t),\qquad X(0)=1.
\end{eqnarray}
We approximate the fourth moment, i.\,e., $\phi(x)=x^4$, with exact solution
\begin{equation}
\E[\phi(X(t))]=\frac{\scriptstyle 4625768169}{\scriptstyle 73570420483600}-\frac{\scriptstyle 2998776077847}{\scriptstyle 113706563209000}e^{\frac{731453}{360000}t}
+\frac{\scriptstyle 80235120932849}{\scriptstyle 78178246418000}e^{\frac{251453}{60000}t}
\end{equation}
on the time interval $I=[0,1]$. The simulation results at time $t=1$ are shown in \cref{fig:exDR22,fig:exDR32}. We observe that with multiple noise terms, the new methods have similar convergence to the standard methods, but with a significantly reduced cost.
The method \Method{AAK3} displays an improved constant of convergence.


\initcompeffort{10}
\numfigure{\ConvNl}{e4}{AAK4,RI6}{Numerical results for \cref{ex:DR}, methods of order (2,2)\label{fig:exDR22}}
\numfigure{\ConvNl}{e4}{AAK5,AAK3,DRI1,RI5,W2Ito1}{Numerical results for \cref{ex:DR}, methods of order (3,2)\label{fig:exDR32}}

 \end{ex}
\tikzexternaldisable
\begin{ex}\label{eq:invmeasuresampling}
For our last example, we are interested in the sampling of the invariant measure of the following ergodic dynamics in a context of molecular dynamics:
\begin{equation}
\label{equation:mul_Langevin}
dX(t)=F(X(t))dt +\Div(D^2(X(t)))dt+\sqrt{2} D(X(t)) dW(t), \quad F(x)=-D^2(x)\nabla V(x)
\end{equation}
where $V\colon \R^d\rightarrow\R$ is a smooth potential, $W$ is a $d$-dimensional Brownian motion, and $D\colon\R^d\rightarrow\R^{d\times d}$ is smooth.
Choosing $D(x)=I_d$ yields the standard overdamped Langevin dynamics \cite{Lelievre10fec}.
The dynamics of equation \eqref{equation:mul_Langevin} is naturally ergodic under technical assumptions on $V$ and $D$, that is, for all test function $\phi$,
\[\lim_{T\rightarrow\infty}\frac{1}{T}\int_0^T \phi(X(t))dt=\int_{\R^d} \phi(x) \rho_\infty(x) dx,\quad \rho_\infty(x)\propto \exp(-V(x)).
\]
The choice of a non-constant $D$ map does not modify the sampled measure $\rho_\infty$, but it can allow for more efficient sampling.
In \cite{Bronasco25els}, a generalisation of the popular Leimkuhler-Matthews integrator \cite{Leimkuhler13rco} for sampling the invariant measure of \eqref{equation:mul_Langevin} with order two is proposed. It is a postprocessed integrator \cite{Vilmart15pif} and it relies on a non-detailed second order weak integrator for the Itô SDE:
\[dX(t)=\sqrt{2} D(X(t)) dW(t).\]
Using our analysis, we propose the following postprocessed method for sampling the invariant measure of \eqref{equation:mul_Langevin},
\begin{align*}
H_n&=X_n+\frac{h}{4} F(\overline{X}_{n-1}),\\
X_{n+1}&=X_n
	+h F(\overline{X}_n)
	+\sqrt{2h}\sum_p D_{:,p}(H_n+\sqrt{\frac{h}{2}}\sum_q D_{:,q}(H_n) \Theta_{p,q}^n)\theta^n_p,\\
\overline{X}_n&=X_n+\sqrt{\frac{h}{2}}\sum_p D_{:,p}(H_n)\theta^n_p,
\end{align*}
with the random variables defined in Section \ref{section:reduced_order_ito} and for any initial condition $\overline{X}_{-1}=X_0$.
Assuming ergodicity of the scheme, applying the analysis of \cite{Phillips25nwc} yields that $(\overline{X}_n)$ is an explicit method of first weak order and of second order for sampling the invariant measure of \eqref{equation:mul_Langevin}.
This method uses only one evaluation of $F$ and two evaluations of $D$ per step. It does not rely on a specific form of $D$ as in \cite{Phillips25nwc}.
\end{ex}

\section{High order analysis with decorated and exotic series}
\label{section:general_conditions}

We derive here the general order conditions for the weak approximation of stochastic dynamics. While the idea is not new \cite{Rossler06rta,Rossler06rkm}, we present a new approach with exotic forests that simplifies the analysis and reduces the number of forests, together with modern algebraic tools from Hopf algebra theory for the derivation of stochastic order conditions.

\subsection{Decorated and exotic Butcher forests}

We consider graphs $\pi=(V,E)$ where $V$ is a finite set of nodes and $E\subset V\times V$ is a set of directed edges. If $e = (v,w) \in E$, the edge $e$ is going from the node $v$ to the node $w$, $v$ is a predecessor of $w$, and $w$ is a successor of $v$. We impose that each node has at most one outgoing edge.
The nodes that do not have a successor are called roots and we impose that the connected components, called trees, have exactly one root. Such graphs are then called a forest. By convention, we draw the edges going from top to bottom and the roots as the bottommost nodes.
For instance, the following graph has three roots:
\[V=\{1,2,3,4,5,6\},\quad E=\{(4,2),(5,2),(6,3)\},\quad \pi=(V,E)=\forestHC.\]
We attach specific decorations, also called colours in numerics, to the graphs. A decoration of a graph $\pi=(V,E)$ is a map $d\colon V\rightarrow\N$ such that for $n>0$, $\abs{d^{-1}(n)}\in 2\N$. A decorated graph is written $\pi_d$.
Given two decorations $d_1$ and $d_2$ of a given graph $\pi$, we say that $d_2$ is finer than $d_1$, written $d_2 \leq d_1$, if there exists $\alpha\colon \N\rightarrow\N$ such that $d_1=\alpha \circ d_2$ and $\alpha(n)=0$ if and only if $n=0$. The map $\alpha$ identifies colours. If $d_1\leq d_2$ and $d_2\leq d_1$, the decorations are equivalent. We write $d_1< d_2$ if $d_1\leq d_2$ and $d_1$ and $d_2$ are not equivalent.
The nodes of decoration $0$ are drawn in black.

\begin{ex}\label{ex_decorations}
A forest $\pi$ can be decorated in different ways:
\begin{equation}
\label{equation:ex_decorations}
\pi_{d_1}=\forestIC, \
\pi_{d_2}=\forestJC, \
\pi_{d_3}=\forestKC, \
\pi_{d_4}=\forestLC, \
\pi_{d_5}=\forestMC, \
\pi_{d_6}=\forestNC.
\end{equation}
In this example, the decorations $d_3$, $d_4$, $d_5$, $d_6$ are finer than $d_2$.
\end{ex}

The numerical and the exact flow of stochastic differential equations rewrite naturally with decorated forests. However, let us introduce a simpler set of forests that is sufficient for expanding the exact flow, in the spirit of \cite{Laurent20eab, Bronasco22ebs}.
\begin{definition}
Two decorated graphs $\pi^1_{d_1}$ and $\pi^2_{d_2}$ are equivalent if there exists a bijection between their sets of nodes that preserves the oriented edges and sends the decoration $d_1$ to a decoration equivalent to $d_2$. We call decorated forests the equivalence classes of such graphs, where we also add the empty forest $\textbf{1}$.

Analogously, we call exotic forests the equivalence classes of decorated forests $\pi_d$ such that for all $n>0$, $\abs{d^{-1}(n)}\in \{0,2\}$. A pair of nodes with matching non-zero decoration is called a liana. We gather the decorated and exotic forests in the sets $DF$ and $EF$ and write $\DD\FF=\Span_\R(DF)$, $\EE\FF=\Span_\R(EF)$.

The order of a decorated forest is
\[\abs{\pi_d}=\abs{d^{-1}(0)}+\frac{1}{2}\sum_{n>0} \abs{d^{-1}(n)}.\]
The number of automorphisms of a given decorated forest $\pi_d$ is the symmetry coefficient $\sigma(\pi_d)$.
\end{definition}

\begin{ex}
In \cref{ex_decorations}, all forests are exotic, except $\pi_{d_2}$ as the decoration $1$ appears more than two times.
The decorated forests of \cref{ex_decorations} have the order
\[
\abs{\pi_{d_1}}=3,\quad \abs{\pi_{d_2}}=\abs{\pi_{d_3}}=\abs{\pi_{d_4}}=\abs{\pi_{d_5}}=\abs{\pi_{d_6}}=2,
\]
and the symmetry coefficients satisfy
\[
\sigma(\pi_{d_1})=\sigma(\pi_{d_4})=\sigma(\pi_{d_5})=\sigma(\pi_{d_6})=1,\quad \sigma(\pi_{d_2})=\sigma(\pi_{d_3})=2.
\]
The list of all exotic and decorated forests of order one and two is presented in \cref{table:exotic_order_conditions,table:Isserlis_order_conditions}.
\end{ex}

\begin{remark}
Note that two decorated forests can be equivalent, while having non-equivalent decorations. For instance in \cref{ex_decorations}, the forests $\pi_{d_4}$, $\pi_{d_5}$, and $\pi_{d_6}$ are equivalent, but only the decorations $d_4$ and $d_5$ are equivalent.
Given two decorated forests $\pi_{d_1}$, $\pi_{d_2}$ with $d_2\leq d_1$, we write $m(\pi_{d_2},\pi_{d_1})$ the number of different finer decorations of $\pi_{d_1}$ that yield a decorated forest equivalent to $\pi_{d_2}$.
\end{remark}

Let us now equip $\EE\FF$ with algebraic structures.
On one hand, the concatenation product of two decorated forests, denoted $\pi_{d_1}\cdot\pi_{d_2}$, yields a decorated forest given by the union of the two graphs, with the decoration that preserves the nodes of same decoration in $\pi_{d_1}$ and $\pi_{d_2}$, but does not use the same non-zero decoration for nodes in $\pi_{d_1}\cdot\pi_{d_2}$:
\[
\forestOC\cdot\forestPC=\forestQC.
\]
On the other hand, the Grossman-Larson product $\pi_{d_1}^1 \diamond \pi_{d_2}^2$ concatenates and grafts the roots of $\pi_{d_1}^1$ on all nodes of $\pi_{d_2}^2$ in all possible ways (counting multiplicity), using different non-zero decorations for the nodes in $d_1$ and $d_2$.
For instance, we find
\[\forestRC\diamond \forestSC= \forestTC +\forestUC, \quad
\forestVC\diamond \forestWC=\forestXC+2\forestYC+\forestAD, \quad
\forestBD\diamond \forestCD=2\forestDD+2\forestED+4\forestFD+\forestGD
.\]
Let the deshuffle coproduct $\Delta\colon \EE\FF\rightarrow\EE\FF\otimes \EE\FF$:
\[
\Delta \pi=\sum_{\pi_1,\pi_2\in EF,\pi_1\cdot\pi_2=\pi} \pi_1\otimes\pi_2,\quad \Delta\textbf{1}=\textbf{1}\otimes \textbf{1}.
\]
For instance, we find (note that lianas cannot be split on different sides of the tensor product)
\[
\Delta \forestHD
=\textbf{1}\otimes \forestID
+\forestJD\otimes \forestKD
+\forestLD\otimes \forestMD
+\forestND\otimes \textbf{1}.
\]
The exotic forests are naturally equipped with two structures of Hopf algebras.
We refer to \cite{Bronasco22ebs,Bronasco22cef} for similar structures in the simpler context of SDEs with additive noise.
\begin{proposition}
The exotic forests equipped with the concatenation product and the Grossman-Larson product have a structure of Hopf algebras $(\EE\FF,\cdot,\Delta)$ and $(\EE\FF,\diamond,\Delta)$, graded by the order.
\end{proposition}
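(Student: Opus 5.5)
Both structures will be handled by reduction to known Hopf algebra facts. The concatenation algebra becomes a free commutative algebra with the deshuffle coproduct; the Grossman--Larson structure becomes the dual of a Connes--Kreimer-type coproduct. I will follow the strategy of \cite{Bronasco22ebs,Bronasco22cef} for additive noise, with the lianas and the multiplicative structure of the decorations as the new ingredients.

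\textbf{Concatenation and grading.} First, I check that exotic forests are exactly the monomials in the concatenation product generated by the \emph{connected} exotic forests. A connected exotic forest is one that cannot be split as $\pi_1\cdot\pi_2$ without cutting a liana. Two trees are in the same component when some liana joins them. Concatenation is associative and commutative up to the equivalence of decorations, since colours are only defined up to relabelling. It is graded because $\abs{\pi_{d_1}\cdot\pi_{d_2}}=\abs{\pi_{d_1}}+\abs{\pi_{d_2}}$. So $(\EE\FF,\cdot)$ is the free commutative algebra on connected exotic forests.

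Next, $\Delta$ as defined is the unique algebra morphism for which connected exotic forests are primitive. The sum over $\pi_1\cdot\pi_2=\pi$ runs over subsets of the connected components, counted with multiplicity as in the example. Coassociativity, counitality with $\varepsilon(\textbf{1})=1$ and compatibility $\Delta(\pi\cdot\pi')=\Delta\pi\cdot\Delta\pi'$ then follow from this standard description. Connectedness is automatic: the order is in $\frac12\N$ and the only forest of order $0$ is $\textbf{1}$, so the antipode exists by the usual recursion. This gives the Hopf algebra $(\EE\FF,\cdot,\Delta)$.

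\textbf{Grossman--Larson product.} I first check that $\pi^1\diamond\pi^2$ is again exotic. Grafting changes only edges, and the two forests use disjoint colour sets, so every non-zero colour still appears $0$ or $2$ times. The order is additive, with $\textbf{1}$ as unit.

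Associativity is the key step. I plan to prove it by realising $\diamond$ as the composition of differential operators. Each exotic forest defines an operator $F(\pi_d)$: the roots become derivatives of $\phi$, the nodes become the $f^{d(v)}$ with the corresponding derivatives, and paired colours are summed over a common noise index. With this, $F(\pi^1\diamond\pi^2)=F(\pi^2)F(\pi^1)$ up to the chosen ordering, by the Leibniz rule. The derivatives of $F(\pi^1)$ distribute over the factors of $F(\pi^2)$, which is exactly grafting the roots of $\pi^1$ onto the nodes of $\pi^2$ in all ways, together with concatenation (derivatives hitting $\phi$). This yields associativity once injectivity of $F$ is available for sufficiently large $d,m$. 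Injectivity is the standard independence of elementary differentials, and I would cite or adapt it from the exotic/aromatic literature.

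If that route is unavailable, I would prove associativity combinatorially. Both $(\pi^1\diamond\pi^2)\diamond\pi^3$ and $\pi^1\diamond(\pi^2\diamond\pi^3)$ equal the sum over all ways to graft the roots of $\pi^1$ and $\pi^2$ onto nodes of $\pi^1\cup\pi^2\cup\pi^3$ respecting the order constraints, exactly as in the Oudom--Guin construction. Indeed, $\diamond$ is the Oudom--Guin product from the pre-Lie grafting on trees, extended to the symmetric algebra. Lianas are inert for grafting, since they only record colour pairing, and connected components are handled by treating each liana-connected block as a unit.

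\textbf{Compatibility of $\Delta$ with $\diamond$.} I need $\Delta(\pi^1\diamond\pi^2)=\Delta\pi^1\diamond\Delta\pi^2$. The plan is a bijection. A grafting of $\pi^1$ onto $\pi^2$ followed by a splitting of the result into a liana-closed subforest pair corresponds to splitting each of $\pi^1$ and $\pi^2$ and grafting sideways. This requires that grafted roots stay in the component of their landing node, which holds because grafting joins trees and lianas are preserved.

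\textbf{Main obstacle.} The hardest part is getting the multiplicities right: the decorations are defined only up to equivalence, and symmetry factors $\sigma$ interact with both products. I would first work with labelled representatives, where the counts are transparent, and then pass to equivalence classes.
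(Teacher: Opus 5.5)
The paper does not actually prove this proposition. It only points to the additive-noise analogues in \cite{Bronasco22ebs,Bronasco22cef}. Later it asserts, again without proof and citing \cite{Panaite00rtc,Hoffman03cor}, that $(\EE\FF,\diamond,\Delta)$ is, up to the symmetry coefficients, the graded dual of $(\EE\FF,\cdot,\Delta_{BCK})$. Your opening sentence announces that duality route, but your argument never uses it. What you actually give is a direct proof, and it is sound.

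\begin{itemize}
\item The concatenation part is the polynomial Hopf algebra on liana-connected blocks. You are right that $\Delta$ must be read with multiplicities, i.e.\ as a sum over subsets of blocks of a fixed representative. Summing over pairs of classes instead would give $\Delta(\bullet\cdot\bullet)$ only one copy of $\bullet\otimes\bullet$, which breaks compatibility with both products.
\item For $\diamond$, take representatives with pairwise disjoint colours, compute the Oudom--Guin product of the grafting pre-Lie algebra on colour-labelled trees, and project to classes. This gives associativity.
\item Your bijection for $\Delta(\pi^1\diamond\pi^2)=\Delta\pi^1\diamond\Delta\pi^2$ is correct, because after grafting a root lies in the same block as its landing node.
\end{itemize}

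Your route is self-contained and never involves $\sigma$. Summing over grafting maps and over subsets of blocks of labelled representatives reproduces the paper's coefficients, e.g.\ the $2,2,4,1$ in its example. So the multiplicity obstacle you anticipate largely disappears. The duality route instead delivers the Grossman--Larson structure together with the BCK proposition used to compose S-series. Its cost is checking that the transpose of $\Delta_{BCK}$ under $\langle\pi,\pi'\rangle=\sigma(\pi)\delta_{\pi,\pi'}$ equals $\diamond$, which is exactly that multiplicity bookkeeping.

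Two cautions. First, the differential-operator route rests on injectivity of $F^{\text{exo}}$. The paper neither states nor proves this, and here it would need an invariant-theory argument in the noise index, so let the combinatorial argument carry the proof. Note also that \cref{lemma:comp_GL} gives $F^{\text{exo}}(\pi^1\diamond\pi^2)=F^{\text{exo}}(\pi^1)\circ F^{\text{exo}}(\pi^2)$, not the reversed order you wrote.

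Second, in the Oudom--Guin step the generators must be the coloured trees, not the liana-connected blocks. Roots of one block are grafted independently and can land in different blocks. Take $a$ to be the two single nodes of one liana and $B=C=\bullet$. Then $a\diamond(B\cdot C)$ contains the term with one root of $a$ on each black node. The extension rule $a\triangleright(BC)=(a\triangleright B)C+B(a\triangleright C)$ for a primitive block generator would miss that term. Blocks should therefore enter only through $\Delta$, exactly as in your compatibility argument.
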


\begin{remark}
An exotic forest $\pi_d$ is called primitive if
\[\Delta\pi_d=\textbf{1}\otimes \pi_d+\pi_d\otimes \textbf{1}.\]
In contrast to the deterministic setting, primitive forests are not the exotic trees.
For instance, $\forestOD$ and $\forestPD$ are primitive as they cannot be written as a concatenation of exotic trees.
In a variety of numerical contexts, the primitive elements exactly correspond to the order conditions of the numerical methods, so that the number of order conditions is given by the number of primitive elements \cite{Ebrahimi24aso, Bronasco22cef}. One of the major difficulties of stochastic numerics for general SDEs is that the coefficient maps of stochastic Runge-Kutta methods are not characters w.r.t.\ts concatenation in general, so that one has to consider all decorated forests for the order conditions. Hence the high number of order conditions in \cref{theorem:RK_conditions_general}.
\end{remark}

\subsection{Algebraic expansion of flows with decorated and exotic forests}

The decorated and exotic forests represent elementary differentials via the use of the elementary differential map $F^{\text{dec}}$.
\begin{definition}
Let a decorated forest $\pi_d=(V,E,d)$.
Then, the decorated elementary differential map $F^{\text{dec}}$ is the differential operator acting on test functions $\phi \in \CC_P^\infty(\R^d)$ satisfying $F^{\text{dec}}(\textbf{1})[\phi](x)=\phi(x)$ and
\[
F^{\text{dec}}(\pi_d)[\phi](x)=\sum_{\underset{w\in V}{i_w=1,\dots,d}} \sum_{\underset{p_0=0,\ p_{n_1}\neq p_{n_2} \text{ if } n_1\neq n_2}{p_n=1,\dots,m,\ n\in \Img(d)\setminus 0}} \phi_{I_{R}}(x) \prod_{v\in V} f^{p_{d(v)},i_v}_{I_{\Pi(v)}}(x),
\]
where $R\subset V$ is the set of roots, $\Pi(v)$ denotes the set of predecessors of $v$, and $I_{S}=i_{w_1}\dots i_{w_n}$ for $S=\{w_1,\dots,w_n\}$.
Analogously, the exotic elementary differential map on $\pi_d\in EF$ satisfies $F^{\text{exo}}(\textbf{1})[\phi](x)=\phi(x)$ and
\[
F^{\text{exo}}(\pi_d)[\phi](x)=\sum_{\underset{w\in V}{i_w=1,\dots,d}} \sum_{\underset{p_0=0}{p_n=1,\dots,m,\ n\in \Img(d)\setminus 0}} \phi_{I_{R}}(x) \prod_{v\in V} f^{p_{d(v)},i_v}_{I_{\Pi(v)}}(x).
\]
\end{definition}

The two elementary differential maps $F^{\text{dec}}$ and $F^{\text{exo}}$ are very similar, the only difference lying in the additional summing restriction $p_{n_1}\neq p_{n_2}$ in the definition of $F^{\text{dec}}$.
For instance, the forest $\pi_{d_3}$ from \cref{ex_decorations} yields
\begin{align*}
F^{\text{dec}}(\pi_{d_3})[\phi]&=\sum_{i,j,k,l=1}^d \sum_{\underset{p\neq q}{p,q=1}}^m \phi_{ijk} f^{p,i}_{l} f^{p,l} f^{q,j} f^{q,k},\\
F^{\text{exo}}(\pi_{d_3})[\phi]&=\sum_{i,j,k,l=1}^d \sum_{p,q=1}^m \phi_{ijk} f^{p,i}_{l} f^{p,l} f^{q,j} f^{q,k}.
\end{align*}
We observe in particular
\[F^{\text{exo}}(\pi_{d_3})=F^{\text{dec}}(\pi_{d_3})+F^{\text{dec}}(\pi_{d_2}).\]
We refer to \cref{table:exotic_order_conditions,table:Isserlis_order_conditions} for further examples.
The exotic formalism is simpler as it does not require to work with different cases depending on the equal values in $p_n$, which quickly becomes tedious as the order of the method goes up.

The following result, in the spirit of \cite{Rota64otf}, allows one to translate elementary differentials between the exotic and decorated formalisms.
\begin{proposition}
\label{prop:Moebius}
Let $\pi_d\in EF$ be an exotic forest, then the exotic differential map satisfies
\[
F^{\text{exo}}(\pi_d)=\sum_{\underset{d\leq d_0}{\pi_{d_0}\in DF}} F^{\text{dec}}(\pi_{d_0}),
\]
and $F^{\text{exo}}$ can thus be extended to $\DD\FF$ with this identity.
Then, the Moebius inversion formula holds for $\pi_d\in DF$,
\[
F^{\text{dec}}(\pi_{d})=\sum_{\underset{d\leq d_0}{\pi_{d_0}\in DF}} \mu(\pi_{d},\pi_{d_0}) F^{\text{exo}}(\pi_{d_0}),
\]
where the Moebius function is
\[
\mu(\pi_{d_1},\pi_{d_1})=1,\quad \mu(\pi_{d_1},\pi_{d_2})=-\sum_{\underset{d_1\leq d<d_2}{\pi_{d}\in DF}} \mu(\pi_{d_1},\pi_d).
\]
\end{proposition}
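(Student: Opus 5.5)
The plan is to regroup the sum defining $F^{\text{exo}}(\pi_d)$ according to which colour indices coincide, and then to invert the resulting identity by Möbius inversion on the poset of decorations of a fixed graph.

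\textbf{First identity.} Fix a representative graph $\pi=(V,E)$ with decoration $d$. In the sum defining $F^{\text{exo}}(\pi_d)$, the indices $p_n\in\{1,\dots,m\}$ for $n\in\Img(d)\setminus 0$ are unrestricted. I will partition this index set according to the equivalence relation ``$p_{n_1}=p_{n_2}$'' on $\Img(d)\setminus 0$. Each set partition $P$ of $\Img(d)\setminus 0$ gives a coarser decoration $d_P=\alpha_P\circ d$. Here $\alpha_P$ sends each block to one new nonzero label and fixes $0$, so that $d\le d_P$.

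On the part of the index set where the coincidences are exactly those prescribed by $P$, the summand is precisely the one defining $F^{\text{dec}}(\pi_{d_P})$. Indeed, the blocks then carry pairwise distinct values, which is exactly the restriction $p_{n_1}\neq p_{n_2}$. Also, $d_P$ is a valid decoration, since unions of even-cardinality classes are even. Hence
\[
F^{\text{exo}}(\pi_d)=\sum_{P} F^{\text{dec}}(\pi_{d_P}).
\]
The map $P\mapsto d_P$, taken modulo equivalence of decorations, is a bijection onto the coarser decorations $d_0\ge d$ of the fixed graph. Reading the sum ``$\sum_{d\le d_0}$'' in the statement as a sum over such coarsenings (counted with the multiplicity $m(\cdot,\cdot)$ when several coarsenings give equivalent decorated forests), this is the first formula. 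By linearity, it also defines $F^{\text{exo}}$ on all of $\DD\FF$.

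\textbf{Möbius inversion.} For a fixed graph $\pi$, the set of decorations coarser than a given one, ordered by $\le$, is isomorphic to the partition lattice of its set of nonzero colours. It is therefore a finite poset with minimum element $d$. On this poset the relation $F^{\text{exo}}(\pi_{d'})=\sum_{d'\le d_0}F^{\text{dec}}(\pi_{d_0})$ holds for every $d'$. This is the ``sum over upper set'' transform, a unitriangular linear map with respect to the partial order.

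Rota's Möbius inversion then gives
\[
F^{\text{dec}}(\pi_d)=\sum_{d\le d_0}\mu(d,d_0)F^{\text{exo}}(\pi_{d_0}),
\]
with $\mu$ defined by exactly the stated recursion. I will verify this directly by substitution: the right-hand side becomes
\[
\sum_{d\le d_1}F^{\text{dec}}(\pi_{d_1})\sum_{d\le d_0\le d_1}\mu(d,d_0),
\]
and the inner sum equals $\delta_{d,d_1}$ by the defining recursion of $\mu$. Finiteness of the poset makes the induction on the rank of $d_1$ legitimate.

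\textbf{Expected obstacle.} The main difficulty is the bookkeeping that passes from decorations of a fixed graph to equivalence classes of decorated forests in $DF$. Automorphisms of $\pi$ can identify distinct coarsenings, so the sums must either run over labelled coarsenings or carry the multiplicities $m(\pi_{d_0},\pi_d)$. I will work throughout on the labelled poset of decorations of one graph, where both formulas are clean. Only at the end will I group terms into classes. This grouping is consistent because $F^{\text{dec}}$ and $F^{\text{exo}}$ are invariant under graph isomorphisms and relabelling of colours.
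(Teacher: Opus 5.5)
Your proposal is correct and is essentially the argument the paper has in mind (the paper gives no written proof, only the pointer to Rota's Möbius inversion): you split the unrestricted colour sum of $F^{\text{exo}}$ by the coincidence pattern of the $p_n$, i.e.\ over set partitions of the nonzero colours, and then invert on the partition lattice of labelled coarsenings of $d$. The one slip is your parenthetical naming of the multiplicity: a class $\pi_{d_0}$ occurs as many times as there are coarsenings of $d$ landing in that class, which is $m(\pi_d,\pi_{d_0})\,\sigma(\pi_d)/\sigma(\pi_{d_0})$ and not the paper's refinement count $m(\pi_d,\pi_{d_0})$ (e.g.\ $F^{\text{exo}}(\forestOE)=F^{\text{dec}}(\forestOE)+F^{\text{dec}}(\forestNE)$ although $m(\forestOE,\forestNE)=2$), so keep all sums, including the recursion defining $\mu$, over labelled decorations as you planned.
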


The elementary differential translates the operations on forests to operations on differential operators.
\begin{lemma}
\label{lemma:comp_GL}
The composition of differential operators is given by the Grossman-Larson product: for $\pi_{d}$, $\hat{\pi}_{\hat{d}}\in EF$, one has for all $\phi\in\CC_P\infty(\R^d)$,
\[
(F^{\text{exo}}(\pi_d)\circ F^{\text{exo}}(\hat{\pi}_{\hat{d}}))[\phi]=F^{\text{exo}}(\pi_d)[F^{\text{exo}}(\hat{\pi}_{\hat{d}})[\phi]]=F^{\text{exo}}(\pi_{d}\diamond\hat{\pi}_{\hat{d}})[\phi].
\]
\end{lemma}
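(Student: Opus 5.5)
The plan is to compute $F^{\text{exo}}(\pi_d)[F^{\text{exo}}(\hat\pi_{\hat d})[\phi]]$ directly from the definition and the Leibniz rule, and then to identify each resulting term with one grafting in $\pi_d\diamond\hat\pi_{\hat d}$.

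First I write the inner operator as
\[
\psi(x):=F^{\text{exo}}(\hat\pi_{\hat d})[\phi](x)=\sum_{\hat i,\hat p}\phi_{I_{\hat R}}(x)\prod_{w\in\hat V}f^{\hat p_{\hat d(w)},i_w}_{I_{\hat\Pi(w)}}(x).
\]
This is a sum of products of factors, one factor for $\phi$ (attached to the roots, viewed as an extra virtual root node) and one factor per node $w\in\hat V$. Next I apply the outer operator. By definition, $F^{\text{exo}}(\pi_d)[\psi]=\sum \psi_{I_R}\prod_{v\in V} f^{p_{d(v)},i_v}_{I_{\Pi(v)}}$. Here $\psi$ is differentiated once with respect to $x_{i_r}$ for each root $r\in R$ of $\pi_d$.

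By the generalized Leibniz rule, $\partial_{i_{r_1}}\cdots\partial_{i_{r_k}}$ of the product expands as a sum over all maps $R\to \hat V\cup\{\text{virtual root}\}$. Each such map distributes every derivative to one factor. A derivative $\partial_{i_r}$ landing on the factor of node $w$ adds the index $i_r$ to $I_{\hat\Pi(w)}$, which means $r$ becomes a new predecessor of $w$, i.e.\ the tree rooted at $r$ is grafted on $w$. A derivative landing on $\phi$ adds $r$ to the root set, i.e.\ that tree is concatenated. Each term is therefore exactly the elementary differential of the forest obtained by that grafting/concatenation. The maps are counted with multiplicity, which matches the definition of $\diamond$ "in all possible ways (counting multiplicity)".

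It remains to check the decorations, and this is where the exotic (rather than decorated) map is essential. In $F^{\text{exo}}$ the colour indices $p_n$ of $\pi_d$ and $\hat p_{n'}$ of $\hat\pi_{\hat d}$ are summed independently over $\{1,\dots,m\}$, with no distinctness constraint. Their joint sum is thus precisely the exotic sum for the combined forest in which the non-zero decorations of $d$ and $\hat d$ are relabelled to be disjoint, as in the definition of $\diamond$. Lianas remain pairs, so the result lies in $EF$.

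The main obstacle is bookkeeping: I must make sure that summing over labelled Leibniz terms yields each equivalence class with the right coefficient, as prescribed by $\diamond$ with multiplicity. I would handle this by working throughout with labelled graphs, so that the Leibniz terms and the grafting maps are in literal bijection. Since both $F^{\text{exo}}$ and $\diamond$ are defined on representatives and respect the equivalence, the identity then passes to equivalence classes. The degenerate cases $\pi_d=\textbf{1}$ or $\hat\pi_{\hat d}=\textbf{1}$ are trivial. Finally, the composition is well defined on $\CC_P^\infty$ because the $f^p$ and $\phi$ have polynomially bounded derivatives.
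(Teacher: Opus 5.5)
Your proof is correct. The generalized Leibniz rule, summed over all maps from the roots of $\pi_d$ to $\hat V\cup\{\text{virtual root}\}$, reproduces exactly the labelled graftings and concatenations that define $\pi_d\diamond\hat\pi_{\hat d}$, and this reading of ``counting multiplicity'' matches the coefficients in the paper's worked examples. You also correctly identify that the unconstrained colour sums in $F^{\text{exo}}$ are what justify the disjoint relabelling of decorations; the identity would fail for $F^{\text{dec}}$ because of its distinctness constraint. The paper states this lemma without proof, implicitly relying on this same standard Leibniz/grafting correspondence, so your proposal supplies the omitted argument.
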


The generators of the SDEs \eqref{equation:SDE_Ito}/\eqref{equation:SDE_Strato} are represented by exotic forests as
\[
\LL_{\text{Itô}}=F^{\text{exo}}(\forestQD+\frac{1}{2}\forestRD), \quad \LL_{\text{Strato}}=F^{\text{exo}}(\forestSD+\frac{1}{2}\forestTD+\frac{1}{2}\forestUD).
\]
The expansion \eqref{equation:exact_expansion} rewrites in terms of exotic forests, using \cref{lemma:comp_GL}.
\begin{proposition}
\label{prop:expansion_GL}
The Taylor expansion of the flow \eqref{equation:exact_expansion} is represented by the Grossman-Larson exponential $\exp^\diamond(\tau)=\sum_{n=0}^\infty \frac{1}{n!} \tau^{\diamond n}$, that is,
\begin{align*}
u_{\text{Itô}}(x,h)&=F^{\text{exo}}(\exp^\diamond(h\forestVD+\frac{h}{2}\forestWD))[\phi],\\
u_{\text{Strato}}(x,h)&=F^{\text{exo}}(\exp^\diamond(h\forestXD+\frac{h}{2}\forestYD+\frac{h}{2}\forestAE))[\phi].
\end{align*}
\end{proposition}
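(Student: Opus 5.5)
The plan is to show that the Grossman-Larson exponential reproduces, order by order in $h$, the expansion \eqref{equation:exact_expansion}. The two ingredients are the linearity of $F^{\text{exo}}$ and \cref{lemma:comp_GL}.

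First, I will check that $\LL_{\text{Itô}}=F^{\text{exo}}(\tau)$ with $\tau=\forestVD+\frac12\forestWD$, and likewise for Stratonovich. This is done by reading off the definition of $F^{\text{exo}}$:
\begin{itemize}
\item the single black root gives $\sum_i\phi_i f^{0,i}$;
\item the two-root liana gives $\sum_{i,j}\sum_p \phi_{ij}f^{p,i}f^{p,j}$, the sum over $p$ being unrestricted precisely because the exotic map is used;
\item the grafted liana gives $\sum_{i,j}\sum_p\phi_i f^{p,i}_j f^{p,j}$.
\end{itemize}
This is exactly the stated identity for the generators, so nothing new is required.

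Second, I will prove by induction on $n$ that $\LL^n\phi=F^{\text{exo}}(\tau^{\diamond n})[\phi]$, with $\tau^{\diamond 0}=\textbf{1}$ and $F^{\text{exo}}(\textbf{1})=\mathrm{id}$. The inductive step is
\[\LL^{n+1}\phi=\LL[\LL^n\phi]=F^{\text{exo}}(\tau)\bigl[F^{\text{exo}}(\tau^{\diamond n})[\phi]\bigr]=F^{\text{exo}}(\tau\diamond\tau^{\diamond n})[\phi].\]
The last equality uses \cref{lemma:comp_GL}, extended bilinearly from forests to linear combinations, which is legitimate since both $\diamond$ and $F^{\text{exo}}$ are linear. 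Associativity of $\diamond$, from the Hopf algebra structure $(\EE\FF,\diamond,\Delta)$, makes $\tau^{\diamond n}$ unambiguous.

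Third, I will substitute into \eqref{equation:exact_expansion}. Since $(h\tau)^{\diamond n}=h^n\tau^{\diamond n}$, the truncated sum satisfies
\[\sum_{n\le p}\frac{h^n}{n!}\LL^n\phi=F^{\text{exo}}\Bigl(\sum_{n\le p}\frac{1}{n!}(h\tau)^{\diamond n}\Bigr)[\phi].\]
The exponential is graded by order: $\tau$ is homogeneous of order $1$ and $\diamond$ respects the grading. So for every $p$, the identity of Proposition \ref{prop:expansion_GL} holds as an equality of formal series in $h$ up to $\mathcal O(h^{p+1})$, with the remainder bounded as in \eqref{equation:exact_expansion}.

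The main obstacle is interpretive rather than computational. The statement must be read as an identity of (asymptotic) Taylor expansions, not as a convergent series. Combined with \cref{lemma:comp_GL}, this means the actual proof is a short induction. The one subtle point to verify is that \cref{lemma:comp_GL}'s grafting of lianas is compatible with the unrestricted colour sums in $F^{\text{exo}}$. This compatibility is precisely why the exotic map, rather than $F^{\text{dec}}$, gives a clean composition rule: new lianas created in a product may share a noise index with old ones.
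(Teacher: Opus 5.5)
Your proof is correct and takes essentially the same route as the paper, which justifies the proposition only by noting that \eqref{equation:exact_expansion} rewrites in exotic forests via \cref{lemma:comp_GL}. You spell this out: you write $\LL=F^{\text{exo}}(\tau)$, iterate \cref{lemma:comp_GL} to get $\LL^n=F^{\text{exo}}(\tau^{\diamond n})$, and substitute into \eqref{equation:exact_expansion}, read as an identity of expansions in $h$.
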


\subsection{Decorated and exotic S-series for stochastic numerics}

The main tool for representing the Taylor expansions of our analysis are S-series, which generalise Butcher series \cite{Butcher69teo, Butcher72aat, Hairer06gni} to represent differential operators.
\begin{definition}
Given 1-forms, called coefficient maps, $a\in \DD\FF^*$ and $b\in \EE\FF^*$, the associated decorated and exotic S-series are the following formal power series
\begin{align*}
S_h^{\text{dec}}(a)[\phi]&=\sum_{\pi_d\in DF} h^{\abs{\pi_d}} \frac{a(\pi_d)}{\sigma(\pi_d)} F^{\text{dec}}(\pi_d)[\phi],\\
S_h^{\text{exo}}(b)[\phi]&=\sum_{\pi_d\in EF} h^{\abs{\pi_d}} \frac{b(\pi_d)}{\sigma(\pi_d)} F^{\text{exo}}(\pi_d)[\phi].
\end{align*}
We denote $S^{\text{dec}}(a)=S_1^{\text{dec}}(a)$ and $S^{\text{exo}}(a)=S_1^{\text{exo}}(a)$.
\end{definition}

\begin{remark}
Similar to the exotic S-series being formal sums indexed by exotic forests, the exotic B-series are formal series indexed by exotic trees, whose use is central in the numerical approximation for the invariant measure of ergodic stochastic dynamics \cite{Laurent20eab, Laurent21ocf, Bronasco22ebs, Bronasco22cef}.
It is proven in \cite{Laurent23tue} that the exotic B-series from the additive noise case are not just a combinatorial tool for the simplification of tedious calculations, but are a universal geometric object characterised by the properties of locality and orthogonal equivariance (see also \cite{McLachlan16bsm, MuntheKaas16abs}).
To the best of our knowledge, such geometric properties have not yet been studied for the exotic and decorated S-series presented in this work.
\end{remark}

The Taylor expansion \eqref{equation:exact_expansion} of the flow and the corresponding expansion
\eqref{equation:num_expansion} of the stochastic Runge-Kutta method \eqref{equation:def_RKsto} can
typically be written in terms of decorated S-series, but unfortunately not by exotic S-series for the numerical flow in general, in opposition to the additive noise case. The previous works rely on a set of forests that is unnecessarily bigger than $DF$. The use of decorated forests allows us to avoid repeating the same condition multiple times and to take into account the symmetries of the conditions: one decorated forest stands for one order condition exactly.
\begin{theorem}
\label{thm:flow_exp_S_series}
The exact flow of \eqref{equation:SDE_Ito}/\eqref{equation:SDE_Strato} is given by an exotic S-series:
\[
\E[\phi(X(h))|X_0=x]=S_h^{\text{exo}}(e)[\phi](x).
\]
Consider a stochastic Runge-Kutta method of the form \eqref{equation:def_RKsto} whose coefficients satisfy \cref{ass:RV_conditions}. Then, the expansion \eqref{equation:num_expansion} of the integrator is well defined and is given by a decorated S-series:
\[\E[\phi(X_1)|X_0=x]=S_h^{\text{dec}}(a)[\phi](x).\]
\end{theorem}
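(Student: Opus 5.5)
For the exact flow, I will start from \cref{prop:expansion_GL}. It already writes $u(x,h)$ as $F^{\text{exo}}$ applied to the Grossman--Larson exponential of $h\tau$, where $\tau$ is the exotic combination representing $\LL_{\text{Itô}}$ or $\LL_{\text{Strato}}$. Each power $\tau^{\diamond n}$ is by definition a finite linear combination of exotic forests. This holds because the Grossman--Larson product of exotic forests stays exotic: lianas of the two factors receive distinct decorations, so every non-zero decoration still appears exactly twice. Every forest produced by $\tau^{\diamond n}$ has order $n$, since $|\cdot|$ is additive under $\diamond$: grafting preserves nodes and decorations, and $\forestQD$ and $\forestRD$ both have order one. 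I will then collect terms forest by forest and define
\[e(\pi_d)=\sigma(\pi_d)\sum_{n}\frac{1}{n!}\big\langle \pi_d,\tau^{\diamond n}\big\rangle,\]
where only $n=|\pi_d|$ contributes. Matching powers of $h$ gives $u(x,h)=S^{\text{exo}}_h(e)[\phi](x)$ as formal series, and the remainder bound is the one in \eqref{equation:exact_expansion}. It remains to check that the exponential is a well-defined formal series, which holds because each graded piece is finite.

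For the numerical flow, I will Taylor expand $\phi(X_1)$ around $x=X_n$, writing $X_1-x=h\sum_i z^0_i f^0(H^0_i)+\sqrt h\sum_{i,p} z^p_i f^p(H^p_i)$. I will recursively expand each $f^{q}(H^q_j)$ around $x$, using the stage equations and the fact that the method is (formally) explicit or, in the implicit case, has a unique formal solution for small $h$ with bounded moments. This is the standard B-series argument, here with coloured vertices. Derivatives $\phi_{I_R}$ produce roots, each factor $f^{p,i}_{I}$ is a node carrying the noise index $p$, and differentiating a stage inserts children weighted by $Z^{p,q}_{i,j}$. The result is a sum over \emph{labelled} forests with nodes indexed by noise labels $p_v\in\{0,\dots,m\}$. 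Each term has the form $h^{\#\{p_v=0\}}h^{\frac12\#\{p_v\ne0\}}$ times a product of $z$ and $Z$ coefficients times an elementary differential, and the usual symmetry factors $1/\sigma$ come from the Taylor factorials exactly as in the deterministic case.

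Taking expectations is where \cref{ass:RV_conditions} enters, and this is the step I expect to be the main obstacle. I will group the labels $(p_v)$ by the partition of $V$ they induce, meaning nodes with equal non-zero labels are placed in one block. This assigns each labelling a decoration $d$ with $d^{-1}(0)=\{p_v=0\}$ and distinct values $p_n$ for distinct colours, which is exactly the summation constraint in $F^{\text{dec}}$. The odd-moment vanishing part of the assumption kills every term in which some non-zero label occurs an odd number of times. This includes the terms with half-integer powers of $h$, so every surviving $d$ has all $|d^{-1}(n)|$ even and is a genuine decoration. The permutation invariance part shows that the expectation depends only on the equivalence class of $d$ and not on the actual values $p_n$, so it can be pulled out of the sum over $p_n$. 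I will then define $a(\pi_d)$ as $\sigma(\pi_d)$ times this common expectation, together with the combinatorial multiplicity of the Taylor coefficients. The delicate points are the following. Automorphisms of the decorated graph must be counted against relabellings of colours, so that equivalent decorations give the same forest exactly once with weight $1/\sigma(\pi_d)$. Moreover, the resulting coefficient must be shown not to depend on $m$ or $d$.

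Finally, finiteness of the moments in \cref{ass:RV_conditions}, together with the polynomial growth of $f^p$ and $\phi$, gives the remainder estimate in \eqref{equation:num_expansion} through the usual bounded-moment argument of \cite{Milstein85wao}. This identifies $\AA_j$ with the order-$j$ part of $S^{\text{dec}}_h(a)$ and concludes the proof.
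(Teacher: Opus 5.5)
Your plan follows the paper's route: for the exact flow you take the Grossman--Larson exponential of \cref{prop:expansion_GL} and set $e(\pi_d)$ to $\sigma(\pi_d)$ times the coefficient of $\pi_d$; for the method you do a coloured B-series Taylor expansion, which the paper only sketches by reference. The exact-flow half is fine.

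The gap is your claim that the odd-moment part of \cref{ass:RV_conditions} kills every term in which some non-zero label occurs an odd number of times. As stated, the assumption only makes a moment vanish when $|\{p_i\neq0\}|+|\{r_i\neq0\}|$ is odd, i.e.\ when the \emph{total} number of nodes with non-zero colour is odd. That removes the half-integer powers of $h$ and nothing more. Permutation invariance cannot help, since it relabels colours but never flips signs. Here is a counterexample:
\begin{itemize}
\item take $m=2$, $s=1$, all $Z=0$, $z^0_1=1$, and $z^1_1=z^2_1=\xi$ with $\xi=\pm1$ with probability $\frac12$ each;
\item both parts of \cref{ass:RV_conditions} hold, yet $\E[z^1_1z^2_1]=1$;
\item so the order-$h$ term of $\E[\phi(X_1)]$ contains $\frac12\sum_{p\neq q}\phi_{ij}f^{p,i}f^{q,j}$.
\end{itemize}
In that term two distinct colours each label a single node. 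It is therefore not $F^{\text{dec}}$ of anything in $DF$, because decorations require $|d^{-1}(n)|\in2\N$. Nor is it a fixed linear combination of the order-one $F^{\text{dec}}(\pi_d)$: the substitution $f^2\mapsto-f^2$ leaves all of those unchanged but flips the sign of the cross term. Your step ``every surviving $d$ has all $|d^{-1}(n)|$ even and is a genuine decoration'' therefore fails.

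What is missing is a per-colour parity hypothesis: a moment vanishes whenever some non-zero colour occurs an odd number of times among the root superscripts $p_i$ and the child superscripts $r_i$. You need to add this explicitly; the paper's reference-based sketch does not address it either. It does hold for the concrete coefficients \eqref{eq:ItoCoeff}--\eqref{eq:RVIto} and \eqref{eq:StratoCoeff}--\eqref{eq:RVStrato}. There the law-preserving flip $(\theta_r,\eta_r)\mapsto(-\theta_r,-\eta_r)$ changes the sign of a factor exactly when its root or child colour is $r$, because $\Theta_{p,0}$ involves only the product $\eta_p\theta_p$. With this hypothesis, your grouping by the induced partition together with permutation invariance does produce a decorated S-series.

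The two ``delicate points'' you list are not needed for the statement itself. It suffices to set $a(\pi_d):=\sigma(\pi_d)$ times the collected coefficient of $h^{|\pi_d|}F^{\text{dec}}(\pi_d)[\phi]$. That coefficient involves only moments of the Runge--Kutta coefficients, so it is automatically independent of $f^p$, $\phi$ and the dimension. The automorphism bookkeeping matters only if you want to identify $a$ with the summed expectation the paper writes down after the theorem.
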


The coefficient $e$ can be computed by iteration of the Grossman-Larson product as in \cref{prop:expansion_GL} and then multiplying by the symmetry coefficient (see also the expression in \cite{Rossler04ste}).
For instance, one finds in the Itô case \eqref{equation:SDE_Ito}:
\[
\exp^\diamond(h\forestBE+\frac{h}{2}\forestCE)
=h\Big(\forestDE+\frac{1}{2}\forestEE\Big)
+h^2\Big(\frac{1}{2}\forestFE+\frac{1}{2}\forestGE+\dots\Big)+\dots,
\]
so that $e$ satisfies
\[
\frac{e}{\sigma}(\forestHE)=1,\quad
\frac{e}{\sigma}(\forestIE)=\frac{1}{2},\quad
\frac{e}{\sigma}(\forestJE)=0,\quad
\frac{e}{\sigma}(\forestKE)=\frac{1}{2},\quad
\frac{e}{\sigma}(\forestLE)=\frac{1}{2},\dots
\]
Alternatively, we provide in \cref{prop:BCK} and equation \eqref{eq:BCK_coef_e} an explicit expression of $e$ using an extension of the Butcher-Connes-Kreimer Hopf algebra.
The values of $e$ for exotic forests of order up to two are presented in \cref{table:exotic_order_conditions}.

The coefficient map of stochastic Runge-Kutta methods \eqref{equation:def_RKsto} is deduced from the decorated graph, similarly to the deterministic literature \cite[Chap.\ts III]{Hairer06gni}. One arbitrarily labels the nodes of the forest and writes $z_r^{p_d}$ if a root is labeled with $r$ and decorated with the decoration $d$, with the convention $p_0=0$. Similarly, we write $Z_{v,w}^{p,q}$ if an edge goes from the node $w$ with decoration $q$ to the node $v$ with decoration $p$. The coefficient $a$ is then obtained by summing on every indices involved and taking the expectation.
We provide an example and refer the reader to \cite[Prop.\ts 4.3]{Bronasco22ebs} and \cite{Rossler06rta,Debrabant08bsa} for further details,
\begin{align*}
\pi_d&=\forestME,\\
a(\pi_d)&=\sum_{a,b,c,\dots=1}^s \sum_{p_1,p_2,p_3=1}^m \E[z_a^0 Z_{a,b}^{0,p_1} Z_{b,c}^{p_1,p_2} Z_{a,d}^{0,p_3} Z_{a,e}^{0,0} z_f^{p_2} Z_{f,g}^{p_2,0} z_h^{p_3} Z_{h,i}^{p_3,p_1}].
\end{align*}

\begin{remark}
A 1-form $a\in \HH^*$ over an algebra $(\HH,\cdot)$ is called a character if
\[
a(x\cdot y)=a(x)a(y),\quad x,y\in \HH.
\]
The coefficient map $e\in \EE\FF^*$ of the exact flow naturally is a character. In most numerical contexts, the methods are represented by characters on some Hopf algebra. A major difficulty of stochastic numerics with multiplicative noise is that the coefficient map $a$ of stochastic Runge-Kutta methods is not a character (as $\E[XY]\neq\E[X]\E[Y]$ in general).
\end{remark}

For the approximation of SDEs with additive noise, it is sufficient to use only Gaussian random Runge-Kutta coefficients. In this context, the Isserlis theorem \cite{Isserlis18oaf, MuntheKaas25asp} (also called the Wick formula) guarantees that the S-series of the numerical solution rewrites as an exotic series. Gaussian Runge-Kutta coefficients do not suffice for high order in the case of multiplicative noise. We thus rely on decorated forests and we split in \cref{theorem:RK_conditions_general} the order conditions into two sets, namely exotic and Isserlis order conditions. The Isserlis order conditions allow to write the Taylor expansion \eqref{equation:num_expansion} as an exotic S-series up to the order studied and are computed using the following result.
\begin{theorem}
\label{theorem:Isserlis_discrete}
Consider a numerical method whose expansion takes the form of a decorated S-series with coefficient map $a$. Then, the expansion writes as an exotic S-series if for all $\pi_d\in DF$, the following condition, that we call Isserlis condition, is satisfied,
\[
a(\pi_d)=\sum_{\underset{d_0\leq d}{\pi_{d_0}\in EF}} \frac{\sigma(\pi_d)}{\sigma(\pi_{d_0})} a(\pi_{d_0})
=\sum_{\underset{\pi_{d_0}\in EF}{d_0\leq d}} a(\pi_{d_0}).
\]
\end{theorem}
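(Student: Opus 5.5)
The plan is to start from the decorated S-series $S_h^{\text{dec}}(a)[\phi]=\sum_{\pi_d\in DF} h^{\abs{\pi_d}}\frac{a(\pi_d)}{\sigma(\pi_d)}F^{\text{dec}}(\pi_d)[\phi]$ given by \cref{thm:flow_exp_S_series} and to re-express each $F^{\text{dec}}$ through the exotic elementary differentials. The aim is to produce an explicit coefficient map $b\in\EE\FF^*$ with $S_h^{\text{dec}}(a)=S_h^{\text{exo}}(b)$. The natural candidate is the restriction of $a$ to exotic forests, $b(\pi_{d_0})=a(\pi_{d_0})$ for $\pi_{d_0}\in EF$. Since every exotic forest is also a decorated forest, this choice makes sense.

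Rather than using the Moebius inversion of \cref{prop:Moebius}, I would work in the other direction and expand the candidate exotic series. By the first identity of \cref{prop:Moebius},
\[
S_h^{\text{exo}}(b)[\phi]=\sum_{\pi_{d_0}\in EF} h^{\abs{\pi_{d_0}}}\frac{a(\pi_{d_0})}{\sigma(\pi_{d_0})}\sum_{\underset{d_0\leq d}{\pi_{d}\in DF}} F^{\text{dec}}(\pi_{d})[\phi].
\]
Coarsening a decoration $d_0\leq d$ only identifies colours and never turns black nodes into coloured ones or back. Hence $\abs{\pi_{d_0}}$ and $\abs{\pi_d}$ need not agree, so the first thing to check is the grading. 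The order counts black nodes plus half the coloured ones, and coarsening keeps both of these counts unchanged, so $\abs{\pi_{d_0}}=\abs{\pi_d}$. I can then swap the two sums and group by the coarse decorated forest $\pi_d$. This gives
\[
S_h^{\text{exo}}(b)=\sum_{\pi_d\in DF} h^{\abs{\pi_d}} \Big(\sum_{\underset{d_0\leq d}{\pi_{d_0}\in EF}} \frac{a(\pi_{d_0})}{\sigma(\pi_{d_0})}\Big) F^{\text{dec}}(\pi_d),
\]
where the inner sum is over equivalence classes of forests, each weighted by the number of ways it appears as a refinement. Comparing coefficients of $F^{\text{dec}}(\pi_d)$ with $a(\pi_d)/\sigma(\pi_d)$ then yields exactly the first form of the Isserlis condition. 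This proves sufficiency, because the condition makes the two series coincide term by term.

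The main obstacle is the bookkeeping of multiplicities, which is needed to justify the second equality in the statement. The sum indexed by classes $\pi_{d_0}$ must be reconciled with a sum over actual finer decorations $d_0$ of the fixed graph underlying $\pi_d$. The multiplicity $m(\pi_{d_0},\pi_d)$ of a class should equal $\sigma(\pi_d)/\sigma(\pi_{d_0})$, up to the factor accounting for relabelling of colours. I would prove this by an orbit–stabilizer argument. The automorphism group of $\pi_d$ acts on the set of exotic refinements of $d$ that are equivalent to a given $\pi_{d_0}$, and this action is transitive. The stabilizer of a refinement is the automorphism group of $\pi_{d_0}$, with colour permutations quotiented out since decorations are taken up to equivalence. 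This gives $m(\pi_{d_0},\pi_d)=\sigma(\pi_d)/\sigma(\pi_{d_0})$. Multiplying the coefficient identity by $\sigma(\pi_d)$ turns the class sum into $\sum \frac{\sigma(\pi_d)}{\sigma(\pi_{d_0})}a(\pi_{d_0})$ and the refinement sum into the plain sum over finer exotic decorations $d_0\leq d$. Getting the conventions on colour relabelling consistent with the definition of $\sigma$ is the delicate point. I would check it on the example $F^{\text{exo}}(\pi_{d_3})=F^{\text{dec}}(\pi_{d_3})+F^{\text{dec}}(\pi_{d_2})$ and on the $\forestRB$ row of \cref{table:Isserlis_order_conditions}, where the value $3$ counts the three pairings.
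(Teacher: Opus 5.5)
Your route is the paper's own. Its one-line proof invokes exactly the first identity of \cref{prop:Moebius} and asserts, without proof, the multiplicity identity $m(\pi_{d_0},\pi_d)=\sigma(\pi_d)/\sigma(\pi_{d_0})$. Your grading check and the choice $b=a|_{EF}$ are fine. The step that fails is the orbit--stabilizer argument, and with it the identity itself.

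The stabilizer of a refinement $d_0$ inside $\mathrm{Aut}(\pi_d)$ is $\mathrm{Aut}(\pi_{d_0})\cap\mathrm{Aut}(\pi_d)$, not $\mathrm{Aut}(\pi_{d_0})$. The reason is that an automorphism of the finer forest may exchange lianas lying in different colour classes of $d$. Concretely, take six isolated roots, let $d_0$ have lianas $\{1,2\},\{3,4\},\{5,6\}$, and let $d$ have colour classes $\{1,2,3,4\},\{5,6\}$ (order $3$). Then $\sigma(\pi_d)=4!\,2!=48$ and $\sigma(\pi_{d_0})=2^3\,3!=48$. However, $m(\pi_{d_0},\pi_d)=3$, one for each pairing of $\{1,2,3,4\}$, and the relevant stabilizer has order $16$. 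So the first equality in the displayed statement is itself false beyond order two: for this $d$ it reads $a(\pi_{d_0})=3a(\pi_{d_0})$.

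The same defect sits in your exchange of sums. The first identity of \cref{prop:Moebius} runs over coarser \emph{decorations}, i.e.\ set partitions of the lianas, not over classes. After regrouping, the class $\pi_{d_0}$ therefore enters the coefficient of $F^{\text{dec}}(\pi_d)$ with weight $n(\pi_{d_0},\pi_d)$, the number of coarsenings of $d_0$ equivalent to $\pi_d$, not with weight $1$. In the example, $F^{\text{exo}}(\pi_{d_0})=F^{\text{dec}}(\pi_{d_0})+3F^{\text{dec}}(\pi_d)+F^{\text{dec}}(\pi_{\hat d})$ with $\hat d$ monochromatic, so $n=3$.

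The repair is a double count of pairs (refinement, coarsening) over the orbits of the automorphism group of the uncoloured forest. This gives $m(\pi_{d_0},\pi_d)\,\sigma(\pi_{d_0})=n(\pi_{d_0},\pi_d)\,\sigma(\pi_d)$. Insert this into the correct comparison $a(\pi_d)/\sigma(\pi_d)=\sum_{\pi_{d_0}} n(\pi_{d_0},\pi_d)\,a(\pi_{d_0})/\sigma(\pi_{d_0})$, where the sum is over exotic classes refining $d$. It yields directly $a(\pi_d)=\sum_{\pi_{d_0}} m(\pi_{d_0},\pi_d)\,a(\pi_{d_0})=\sum_{d_0\le d}a(\pi_{d_0})$, the plain-sum form, which is the right Isserlis condition. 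As a check, for $X_1=x+\sqrt{h}\sum_p\theta_pf^p(x)$ with i.i.d.\ standard Gaussian $\theta_p$, one has $a(\pi_{d_0})=1$ and $a(\pi_d)=\E[\theta_1^4\theta_2^2]=3$.

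The $\sigma$-ratio form agrees with the plain-sum form exactly when $n=1$. This is automatic when at most two lianas occur, i.e.\ at order $\le 2$. That is why everything the paper actually uses, such as \cref{table:Isserlis_order_conditions}, is unaffected. It is also why your planned order-two sanity checks cannot detect the problem.
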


\Cref{theorem:Isserlis_discrete} is a straightforward consequence of the identity $m(\pi_{d_0},\pi_d)=\frac{\sigma(\pi_d)}{\sigma(\pi_{d_0})}$ and \cref{prop:Moebius}.
We emphasize that the first sum in \cref{theorem:Isserlis_discrete} is indexed by all the exotic forests with a finer decoration than $d$, while the second one is indexed by all the exotic decorations finer than $d$, so that it allows some repetition.
For example, we have the following Isserlis condition of order two:
\[a(\forestNE)=a(\forestOE)+a(\forestPE)+a(\forestQE)=2a(\forestRE)+a(\forestSE).\]

From the expansions in exotic and decorated S-series of the exact and numerical flow and the characterisation of Isserlis conditions, we derive the weak order conditions of \cref{theorem:RK_conditions_general}.
\begin{proof}[Proof of \cref{theorem:RK_conditions_general}]
Thanks to \cref{thm:flow_exp_S_series}, the exact and numerical flow write respectively as an exotic S-series $S_h^{\text{exo}}(e)$ and a decorated S-series $S_h^{\text{dec}}(a)$.
The Isserlis order conditions from \cref{theorem:Isserlis_discrete} impose that the numerical flow writes as the exotic S-series $S_h^{\text{exo}}(a)$. Then, the order conditions are obtained by identifying the coefficient maps on exotic forests: $a(\pi)=e(\pi)$ for $\pi\in EF$. Together with \cref{ass:RV_conditions}, they ensure that the prerequisites for \cref{prop:Convergence} are satisfied.
\end{proof}

To complete our description of exotic S-series, let us describe the composition of differential operators represented by exotic S-series using the celebrated Butcher-Connes-Kreimer (BCK) Hopf algebra \cite{Connes98har, Kreimer99lfq}.
\begin{definition}
\label{def:adm_cut_BCK}
A cut $c$ of $\pi\in EF$ is a (possibly empty) choice of edges of $\pi$. Removing these edges yields a forest $W_c(\pi)$, the component of $W_c(\pi)$ containing the roots of $\pi$ is written $R_c(\pi)$ and the other components are gathered in the forest $P_c(\pi)$.
A cut is admissible, written $c\in\Adm(\pi)$, if any path from a root to a leaf of $\pi$ has at most one cut and if $R_c(\pi)$, $P_c(\pi)\in \EE\FF$.
\end{definition}

We define the exotic extension of the BCK Hopf algebra in the following result. The proof is analogous to the additive noise case \cite{Bronasco22ebs, Bronasco22cef} and is thus omitted (see also \cite{Panaite00rtc, Hoffman03cor}).
\begin{proposition}
Define the BCK coproduct on $\EE\FF$ by
\[
\Delta_{BCK}\textbf{1}=\textbf{1}\otimes \textbf{1}, \quad \Delta_{BCK}\pi=\pi\otimes \textbf{1}+\sum_{c\in\Adm(\pi)} P_c(\pi)\otimes R_c(\pi).
\]
Then, $(\EE\FF,\cdot,\Delta_{BCK})$ is a graded connected commutative Hopf algebra and its graded dual is isomorphic (up to the symmetry coefficient) to the Grossman-Larson Hopf algebra $(\EE\FF,\diamond,\Delta)$.
\end{proposition}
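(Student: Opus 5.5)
We prove that $(\EE\FF,\cdot,\Delta_{BCK})$ is a Hopf algebra in three stages: first that $\Delta_{BCK}$ is well defined and multiplicative, then coassociativity, then duality with $(\EE\FF,\diamond,\Delta)$, following the pattern of the additive-noise case \cite{Bronasco22ebs,Bronasco22cef}.

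\emph{Well-definedness and multiplicativity.} Any automorphism-free relabelling of $\pi_d$ permutes the admissible cuts. Hence $\sum_{c\in\Adm(\pi)}P_c(\pi)\otimes R_c(\pi)$ depends only on the equivalence class of $\pi_d$. The admissibility condition $R_c(\pi),P_c(\pi)\in\EE\FF$ means that no liana is split between the pruned and the trunk parts. This is exactly what keeps both factors inside the span of exotic forests.

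For $\pi=\pi^1\cdot\pi^2$, a cut of $\pi$ is a pair of cuts of $\pi^1$ and $\pi^2$. The liana condition is checked separately on each part, because lianas are never shared across a concatenation (distinct non-zero decorations are used). This gives $\Delta_{BCK}(\pi^1\cdot\pi^2)=\Delta_{BCK}\pi^1\cdot\Delta_{BCK}\pi^2$, with the convention that the full cut $\pi\otimes\textbf{1}$ corresponds to cutting all root edges (for forests, $\pi\otimes\textbf{1}$ and $\textbf{1}\otimes\pi$ both arise as extreme admissible cuts). Commutativity of $\cdot$ is immediate.

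\emph{Coassociativity.} Both $(\Delta_{BCK}\otimes\mathrm{id})\Delta_{BCK}\pi$ and $(\mathrm{id}\otimes\Delta_{BCK})\Delta_{BCK}\pi$ are sums over pairs of nested admissible cuts $c_1\le c_2$, that is, over chains splitting $\pi$ into three layers. In each case the liana condition requires every liana to lie within a single layer, so the two index sets are in bijection. This is the standard BCK argument; the lianas only add the constraint that each layer be exotic, and that constraint is symmetric in how the chain is built.

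Grading by order is preserved, since cuts do not change nodes or decorations. The algebra is connected, with degree $0$ spanned by $\textbf{1}$. The antipode therefore exists by the usual recursion.

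\emph{Duality.} I would define the pairing $\langle\pi_d,\hat\pi_{\hat d}\rangle=\sigma(\pi_d)\delta_{\pi_d,\hat\pi_{\hat d}}$ and prove
\[
\langle \pi^1\diamond\pi^2,\pi\rangle=\langle \pi^1\otimes\pi^2,\Delta_{BCK}\pi\rangle,\qquad \langle \pi^1\cdot\pi^2,\pi\rangle=\langle \pi^1\otimes\pi^2,\Delta\pi\rangle.
\]
The second identity is a direct count of the ways to split $\pi$ into two sub-forests by concatenation.

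For the first identity, an admissible cut of $\pi$ with $P_c=\pi^1$ and $R_c=\pi^2$ corresponds to a grafting of the roots of $\pi^1$ onto the nodes of $\pi^2$ producing $\pi$. The uncut parts of $P_c$ that sit at root level play the role of the concatenation part of $\diamond$. The liana constraint matches the fact that $\diamond$ uses distinct decorations for $\pi^1$ and $\pi^2$, so no liana straddles them.

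The main obstacle is the symmetry-factor bookkeeping: the coefficient of $\pi$ in $\pi^1\diamond\pi^2$ multiplied by $\sigma(\pi)$ must equal $\sigma(\pi^1)\sigma(\pi^2)$ times the number of admissible cuts of $\pi$ yielding $(\pi^1,\pi^2)$. I would handle this by passing to labelled representatives, where every node and every liana pair carries a distinct label. Both sides then count the same set of labelled graftings/cuts, and dividing by the automorphism groups acting freely gives the factors $\sigma$.

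The subtlety specific to exotic forests is that automorphisms may permute the two ends of a liana, or permute different lianas. This should be accounted for uniformly by letting the labelling record liana pairs as unordered two-element sets.
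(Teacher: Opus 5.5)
Your proposal is correct and follows essentially the route the paper has in mind: the paper omits the proof, deferring to the additive-noise case and to Panaite/Hoffman, and that argument is exactly multiplicativity and coassociativity via (nested) admissible cuts with every liana confined to one layer, plus duality via the $\sigma$-weighted pairing and a labelled cut/grafting bijection. The one thing to promote from a ``convention'' to part of the definition is that $\Adm(\pi)$ must allow cutting an arbitrary subset of (virtual) root edges, so that $R_c(\pi)$ need not contain every root of $\pi$ as the paper's wording literally demands---otherwise $\Delta_{BCK}(\bullet\bullet)$ lacks the term $2\,\bullet\otimes\bullet$, and since lianas tie trees together a multiplicative extension cannot repair this (the coefficient $2$ of the forest ``node of colour $1$ beside a black root with a child of colour $1$'' in the $\diamond$-product of the two-node liana with a black node is dual only to the cut detaching one root but not the other).
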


The main difference with the deterministic setting \cite{Chartier10aso} and the standard Hopf algebra formalisms over decorated forests \cite{Foissy21aso} is that the nodes sharing the same decoration cannot be split by the Butcher-Connes-Kreimer coproduct:
\begin{align*}
    \Delta_{BCK} \forestTE
    &= \mathbf{1} \otimes \forestUE
    +\forestVE \otimes \forestWE
    +\forestXE \otimes \forestYE
    +\forestAF \otimes \forestBF
    + \forestCF \otimes \mathbf{1}.
\end{align*}

The composition of exotic S-series is described by the BCK coproduct (see also \cite{Debrabant11cos} for an analogous description of the composition without the Hopf algebra formalism).
\begin{proposition}
\label{prop:BCK}
Let $a,b\in\EE\FF^*$, then the composition of exotic S-series satisfies
\[
S^{\text{exo}}(a)\circ S^{\text{exo}}(b)=S^{\text{exo}}(a* b),\quad a* b=\mu\circ (a\otimes b)\circ \Delta_{BCK},
\]
where $\mu$ is the multiplication and $*$ is called the composition law.
\end{proposition}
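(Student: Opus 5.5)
The plan is to reduce the statement to a single forest-level identity. By bilinearity of $\mu\circ(a\otimes b)\circ\Delta_{BCK}$, it suffices to show that, for every $\phi$,
\[
S^{\text{exo}}(a)\bigl[S^{\text{exo}}(b)[\phi]\bigr]=\sum_{\pi\in EF}\frac{1}{\sigma(\pi)}\sum_{\pi_1\otimes \pi_2\in \Delta_{BCK}\pi} a(\pi_1)\,b(\pi_2)\,F^{\text{exo}}(\pi)[\phi],
\]
with the coproduct terms counted with their multiplicities. Here the outer operator is the one applied last, matching the convention of \cref{lemma:comp_GL}.

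First, expand the left-hand side as a double sum over $\hat\pi,\pi\in EF$. The summand is
\[
\frac{a(\hat\pi)\,b(\pi)}{\sigma(\hat\pi)\,\sigma(\pi)}\,F^{\text{exo}}(\hat\pi)\bigl[F^{\text{exo}}(\pi)[\phi]\bigr].
\]
By \cref{lemma:comp_GL}, this equals the same coefficient times $F^{\text{exo}}(\hat\pi\diamond\pi)[\phi]$. So everything reduces to the duality between $\diamond$ and $\Delta_{BCK}$, in the precise normalised form
\[
\frac{1}{\sigma(\hat\pi)\sigma(\pi)}\,\hat\pi\diamond\pi=\sum_{\tau\in EF}\frac{n(\hat\pi,\pi;\tau)}{\sigma(\tau)}\,\tau,
\]
where $n(\hat\pi,\pi;\tau)$ is the number of times $\hat\pi\otimes\pi$ appears in $\Delta_{BCK}\tau$. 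This is exactly the statement that the graded dual of $(\EE\FF,\cdot,\Delta_{BCK})$ is the Grossman--Larson algebra up to the symmetry factor, i.e.\ the preceding proposition. I will make it explicit, since the normalisation is the crux.

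The key counting step works with labelled graphs. Let $\hat\pi$ and $\pi$ have labelled vertex sets and fixed representative decorations. The grafting operations in $\hat\pi\diamond\pi$ are in bijection with maps from the roots of $\hat\pi$ to $V(\pi)\cup\{\text{root position}\}$, using disjoint nonzero decorations. Each grafting produces a labelled forest $\tau$ together with a distinguished admissible cut: the new edges, or none for the roots concatenated at root level. This cut recovers $P_c(\tau)=\hat\pi$ and $R_c(\tau)=\pi$. Conversely, an admissible cut of $\tau$ with $P_c\cong\hat\pi$ and $R_c\cong\pi$ arises from $\sigma(\hat\pi)\sigma(\pi)$ labelled graftings, one for each choice of isomorphisms.

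An orbit–stabiliser argument over the automorphism group of $\tau$ then produces the factor $\sigma(\tau)$. The main obstacle is verifying that the decoration constraints are compatible on both sides. Admissibility in \cref{def:adm_cut_BCK} requires $P_c$ and $R_c$ to be exotic, so no liana is split. On the grafting side, $\hat\pi$ and $\pi$ use disjoint decorations, so lianas never straddle the cut. Hence the cuts and the graftings match bijectively, and automorphisms of exotic forests, which are allowed to permute colours of lianas through the equivalence of decorations, act compatibly on both sides.

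Finally, substitute back and exchange the order of summation, which is legitimate as formal series graded by order. The order is additive under grafting and under cuts because lianas are not split, so each $h$-grading piece is a finite sum. Collecting the coefficient of $F^{\text{exo}}(\tau)/\sigma(\tau)$ gives $\sum a(P_c(\tau))\,b(R_c(\tau))$. This sum includes the trivial term $\tau\otimes\mathbf 1$, corresponding to all of $\hat\pi$ grafted at root level with $\pi=\mathbf 1$, and the empty cut giving $\mathbf 1\otimes\tau$. It is exactly $(a*b)(\tau)$, which concludes the proof.
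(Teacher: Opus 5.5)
Your proof is correct and follows the route the paper implicitly takes: the paper gives no separate proof, but combines \cref{lemma:comp_GL} with the $\sigma$-normalised duality between $\diamond$ and $\Delta_{BCK}$ from the preceding proposition, and you verify that duality directly through the bijection between labelled graftings and cuts. One point should be stated explicitly: matching graftings in which some roots of $\hat\pi$ stay at root level with terms $P_c(\tau)\otimes R_c(\tau)$ requires cuts that may also detach entire trees at their root (as if each root hung from a virtual edge, the analogue of the multiplicative BCK convention, which gives for instance the term $2\,\bullet\otimes\bullet$ in $\Delta_{BCK}$ of the forest made of two isolated black nodes), because under the literal wording of the paper's definition of admissible cuts those trees would be placed in $R_c(\tau)$.
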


Define the coefficient map of the Itô generator $l\colon \EE\FF \rightarrow\R$ by $l=\delta_{\forestDF}+\delta_{\forestEF}$ (respectively $l=\delta_{\forestFF}+\delta_{\forestGF}+\frac{1}{2}\delta_{\forestHF}$ for Stratonovich), where $\delta_{\tau}(\hat{\tau})=\ind_{\tau=\hat{\tau}}$. The associated S-series yields the generator $S^{\text{exo}}(l)=\LL$.
Then, the coefficient map $e$ of the exact flow satisfies
\begin{equation}
\label{eq:BCK_coef_e}
u(x,h)=S^{\text{exo}}(e)[\phi](x),\quad e=\exp^*(hl).
\end{equation}

\section{Conclusion}
\label{section:conclusion}

In this paper, we provided a novel approach based on specific choices of random Runge-Kutta coefficients that allows to greatly reduce the number of order conditions for stochastic Runge-Kutta methods of high weak order in the case of multiplicative noise.
We provide a collection of new methods with similar accuracy and cheaper cost compared to the literature. The methods are optimal in the sense that they use the minimal number of function evaluations.
The analysis of the order conditions relies on exotic and decorated S-series and the associated Hopf algebra structures. The algebraic approach allows to obtain a formalism where one forest exactly corresponds to one order condition, to emphasize the central role of exotic series in stochastic numerics, and to identify the major algebraic difficulties brought by multiplicative noise.

The present work raises several new questions, from both algebraic and numerical perspectives, in the design of efficient discretisations. First, it would greatly simplify the analysis to find a class of methods whose Taylor expansions write as exotic series directly.
Moreover, the design of the random variables was done by hand, and a formalisation of the definition of random variables satisfying given moment identities would help in the challenging calculations for higher order.
The simplifying approach and the new methods presented in the present paper could be extended to other contexts such as, for instance, the creation of invariant measure-preserving methods for ergodic dynamics \cite{Bronasco25els}, symplectic schemes for the study of stochastic Hamiltonian systems \cite{Hong22sio}, or for stochastic integration on manifolds.
For the latter, the exotic Butcher series formalism is extended to the study of stochastic Lie-group methods with additive noise in \cite{Bronasco25hoi}. The extension of such methods for multiplicative noise, for sampling ergodic dynamics, and the study of the associated algebraic structures (in the spirit of \cite{Busnot25pha, Busnot26tft}) is exciting matter for future work.

\bigskip

\noindent \textbf{Acknowledgements.}\
A.\ts Busnot Laurent acknowledges the support from the programs ANR-25-CE40-2862-01 (MaStoC - Manifolds and Stochastic Computations) and ANR-11-LABX-0020 (Labex Lebesgue) and would like to thank E.\ts Bronasco and P.\ts Catoire for insightful discussions.

\bibliography{ma_bibliographie}

\vskip-1ex
\begin{appendices}

\section{Implicit-explicit stochastic Runge-Kutta methods}
\label{section:IMEX}

We present in this section a collection of new IMEX methods with optimal number of stages. Such methods could typically be used for solving multiscale stochastic systems.

\paragraph*{Itô diagonally implicit-explicit ($c=\frac14$)}\mbox{}\par
\[
  \begin{array}{c|cc}
    \frac12 & \frac12 & 0 \\ \hline
    0 & 0 & 0 \\
    \frac12 & \frac{1}{2} & 0 \\ \hline
    1 & 0 & 1
  \end{array}
\]

\paragraph*{Itô explicit-diagonally implicit ($c=\frac12$)}\mbox{}\par
\[
\begin{array}{cc|cc}
    0 & 0 & 0 & 0 \\
    1 & 0 & 1 & 0 \\ \hline
    0 & 0 & 1 & 0 \\
    \frac12 & 0 & -\frac12 & 1 \\ \hline
    \frac12 & \frac12 & 0 & 1
  \end{array}
\]

\paragraph*{Stratonovich diagonally implicit-explicit ($c=\frac14$)}\mbox{}\par
\[
  \begin{array}{c|cccc|cccc}
    \frac12 & 0 & \frac12 & 0 & 0 \\
    \hline
    0 & 0 & 0 & 0 & 0 & 0 & 0 & 0 & 0 \\
    0 & \frac12 & 0 & 0 & 0 & \frac12 & 0 & 0 & 0 \\
    \frac32 & -1 & \frac32 & 0 & 0 & -\frac12 & \frac12 & 0 & 0 \\
    \frac32 & -\frac12 & \frac32 & -\frac12 & 0 & -\frac32 & \frac32 & 1 & 0 \\ \hline
    1 & 0 & \frac23 & \frac16 & \frac16
  \end{array}
\]

\paragraph*{Stratonovich explicit-diagonally implicit ($c=\frac12$)}\mbox{}\par
\[
  \begin{array}{cc|ccc|ccc}
    0 & 0 & 0 & 0 & 0 \\
    1 & 0 & 1 & 0 & 0 \\
    \hline
    0 & 0 & 0 & 0 & 0 & \frac12 & 0 & 0 \\
    \frac12 & 0 & \frac12 & 0 & 0 & \frac{3-2\sqrt{3}}{6} & \frac{\sqrt{3}}{6} & 0 \\
    \frac12 & 0 & \frac12 & 0 & 0 & \frac{-3+2\sqrt{3}}{6} & \frac12 & \frac{3-\sqrt{3}}{6} \\ \hline
    \frac12 & \frac12 & 0 & \frac12 & \frac12
  \end{array}
\]

\paragraph*{Stratonovich diagonally implicit ($c=\frac14$)}\mbox{}\par
\[
  \begin{array}{c|ccc|ccc}
    \frac12 & \frac12 & 0 & 0 \\
    \hline
    0 & 0 & 0 & 0 & \frac12 & 0 & 0 \\
    \frac12 & \frac12 & 0 & 0 & \frac{3-2\sqrt{3}}{6} & \frac{\sqrt{3}}{6} & 0 \\
    \frac12 & \frac12 & 0 & 0 & \frac{-3+2\sqrt{3}}{6} & \frac12 & \frac{3-\sqrt{3}}{6} \\ \hline
    1 & 0 & \frac12 & \frac12
  \end{array}
\]

\end{appendices}

\end{document}